\numberwithin{equation}{section}
\newtheorem{theorem}{Theorem}[section]
\newtheorem{definition}[theorem]{Definition}
\newtheorem{lemma}[theorem]{Lemma}
\newtheorem{corollary}[theorem]{Corollary}
\newtheorem{remark}[theorem]{Remark}
\newtheorem{proposition}[theorem]{Proposition}
\newtheorem{problem}[theorem]{Problem}
\title[the dual Minkowski Problem for $q$-torsional rigidity]{the dual Minkowski Problem for $q$-torsional rigidity}
\author{Xia Zhao and Peibiao Zhao}
\thanks{2020 Mathematics Subject Classification: 52A20 \ \ 35K96\ \ 58J35.}
\keywords{Gauss curvature flow; dual $q$-torsional rigidity; Minkowski problem; Monge-Amp\`{e}re equation}
\begin{document}
\begin{abstract}
The Minkowski problem for torsional rigidity ($2$-torsional rigidity) was firstly studied by Colesanti and Fimiani \cite{CA} using variational method. Moreover, Hu \cite{HJ00} also studied this problem by the method of curvature flows and obtained the existence of smooth even solutions. In addition, the smooth non-even solutions to the Orlicz Minkowski problem $w. r. t$ $q$-torsional rigidity were given by Zhao et al. \cite{ZX} through a Gauss curvature flow.

The dual curvature measure and the dual Minkowski problem were first posed and considered by Huang, Lutwak, Yang and Zhang in \cite{HY}. The dual Minkowski problem is a very important problem, which has greatly contributed to the development of the dual Brunn-Minkowski theory and extended the other types dual Minkowski problem.

To the best of our knowledge, the dual Minkowski problem $w. r. t$ ($q$) torsional rigidity is still open because the dual ($q$) torsional measure is blank. Thus, it is a natural problem to consider the dual Minkowski problem for ($q$) torsional rigidity. In this paper, we introduce the $p$-th dual $q$-torsional measure and propose the $p$-th dual Minkowski problem for $q$-torsional rigidity with $q>1$. Then we confirm the existence of smooth even solutions for $p<n$ ($p\neq 0$) to the $p$-th dual Minkowski problem for $q$-torsional rigidity by method of a Gauss curvature flow. Specially, we also obtain the smooth non-even solutions with $p<0$ to this problem.
\end{abstract}
\maketitle
\vskip 20pt
\section{Introduction and main results}

The classical Minkowski problem argues the existence, uniqueness and regularity of a convex body whose surface area measure is equal to a pre-given Borel measure $\mu$ on the sphere $S^{n-1}\subset\mathbb{R}^n$ in the Brunn-Minkowski theory. If the given measure has a positive continuous density, the Minkowski problem can be seen as the problem of prescribing the Gauss curvature in differential geometry.

The Minkowski problem plays an important role in the study of convex geometry, and the research of Minkowski problem has also promoted the development of fully nonlinear partial differential equations. In addition, the Minkowski problem has produced some variations of it, among which the $L_p$ Minkowski problem is particularly important because the $L_p$ Minkowski problem contains some special versions. Namely: when $p=1$, it is the classical Minkowski problem; when $p=0$, it is the famous log-Minkowski problem \cite{BO}; when $p=-n$, it is the centro-affine Minkowski problem \cite{ZG}. The $L_p$ Minkowski problem with $p>1$ was first proposed and studied by Lutwak \cite{LE0}, whose solution plays a key role in establishing the $L_p$ affine Sobolev inequality \cite{HC2, LE01}. Fortunately, Haberl, Lutwak, Yang and Zhang \cite{HC1} proposed and studied the even Orlicz Minkowski problem in 2010 which is a more generalized Minkowski type problem, and its result contains the classical Minkowski problem and the $L_p$ Minkowski problem.

The ($L_p$) dual Brunn-Minkowski theory was introduced by Lutwak in \cite{LE, LE1}, replacing the convex bodies and their ($L_p$) Minkowski addition by the star bodies and their ($L_p$) radial addition. Many notions in the ($L_p$) Brunn-Minkowski theory for convex bodies have their dual analogues in the ($L_p$) dual theory \cite{LE, LE1} or the book \cite{GR02} written by Gardner for more background and references. Successfully, some corresponding inequalities and problems were solved in the ($L_p$) dual Brunn-Minkowski theory. For example, the famous Busemann-Petty problems were solved in \cite{GR, GR0, GR01, ZGY} etc.. But, what acts as the dual counterparts of the geometric measures in the Brunn-Minkowski theory are not clear, behind this lay our inability to calculate the differentials of dual quermassintegrals. Until the work of Huang, Lutwak, Yang and Zhang \cite{HY} in 2016, they established the variational formulas for dual quermassintegrals with the convex hulls instead of the Wullf shapes, this is not only a completely different approach, but Aleksandrov’s variational principle was established without using the Minkowski mixed-volume inequality. Therefrom, the $q$-th dual curvature measure $\widetilde{C}_q(\Omega,\cdot)$ was discovered and posed the dual Minkowski problem below: Given a finite Borel measure $\mu$ on $S^{n-1}$ and $q\in\mathbb{R}$, find the necessary and sufficient condition(s) on $\mu$ so that there exists a convex body $\Omega$ containing the origin in its interior and $\mu(\cdot)=\widetilde{C}_q(\Omega,\cdot)$. This remarkable work \cite{HY} not only promotes the development of the dual Bruun-Minkowski theory, but also extends the other types dual Minkowski problems. Huang et al. \cite{HY} provided the existence of solutions (i.e., origin-symmetric convex bodies) to the dual Minkowski problem with $q\in(0,n)$ for even measure $\mu$. For $q<0$, the existence and uniqueness of solutions to the dual Minkowski problem were given recently in \cite{ZY} by Zhao. The dual Orlicz curvature measure and its Minkowski problem were established by Zhu, Xing and Ye \cite{ZB}. Two special cases of the dual Minkowski problem include the logarithmic Minkowski problem for $q=n$ and the Aleksandrov problem when $q=0$. More works with respect to the dual Minkowski problem, one can see \cite{GR1, GR2, HY2, LR, ZY, ZY1} for details.

With the development of the Minkowski problems and their dual analogues, the study of this types problems have had profound influence and inspired many other problems with similar natures. For example, some geometric measures with physical backgrounds have been introduced into the Brunn-Minkowski theory, naturally, the related Minkowski type problems have also been posed and gradually studied. Early on, Jerison \cite{JE} introduced the capacity Minkowski problem and studied this problem through the prescribing capacity curvature measure. Further, Xiao \cite{XJ} prescribed the capacitary curvature measures on planar convex domains. Moreover, Colesanti, Nystr\"{o}m, Salani, Xiao, Yang and Zhang \cite{C0} established the Hadamard variational formula and considered the Minkowski problem for $p$-capacity. Very recently, based on the definition of $p$-capacity and the dual Minkowski problem, Ji \cite{JLW} introduced the $q$-th dual $p$-capacity measure and considered the existence of solutions to the $p$-capacity dual Minkowski problem when $1<p<n$ and $q<0$.

In addition, the Minkowski problem for torsional rigidity ($2$-torsional rigidity) with physical backgrounds was firstly studied by Colesanti and Fimiani \cite{CA} using variational method. The Minkowski problem for $2$-torsional rigidity was extended to the $L_p$ version by Chen and Dai \cite{CZ} who proved the existence of solutions for any fixed $p>1$ and $p\neq n+2$, Hu and Liu \cite{HJ01} for $0 < p < 1$. The Orlicz Minkowski problem $w.r.t.$ $2$-torsional rigidity was first developed and proven by Li and Zhu \cite{LN}. Further, the $L_p$ variational formula for $q$-torsional rigidity with $q>1$ was established by Huang, Song and Xu \cite{HY0}. The authors \cite{ZX} of this article have also had a systematic investigation on this topic and  proposed the Orlicz Minkowski problem for $q$-torsional rigidity with $q>1$ and obtained its smooth non-even solutions by a Gauss curvature flow.

Based on the foregoing  works for ($q$) torsional rigidity, we found that some beautiful conclusions with respect to the Minkowski problem for ($q$) torsional rigidity have been obtained in the Brunn-Minkowski theory. But, the counterpart of the Minkowski problem for $(q)$ torsional rigidity in the dual Brunn-Minkowski theory has not been considered, mainly because of lack of the corresponding dual ($q$) torsional measure. Thus, it is a very natural, important and challenging problem to introduce the dual ($q$) torsional measure, propose and solve the dual Minkowski problem for ($q$) torsional rigidity. It is believed that this research will contribute to the enrichment and development of the dual Brunn-Minkowski theory.

For convenience, we recall and state firstly the concept of $q$-torsional rigidity and its related contents as below. Let $\mathcal{K}^n$ be the collection of convex bodies in Euclidean space $\mathbb{R}^n$. The set of convex bodies containing the origin in their interiors in $\mathbb{R}^n$, we write $\mathcal{K}^n_o$. Moreover,  let $C^2_{+}$ be the class of convex bodies of $C^2$ if its boundary has the positive Gauss curvature.

We now do the needful. Let $\Omega\in\mathcal{K}^n$, the $q$-torsional rigidity $T_q(\Omega)$ \cite{CA1} with $q>1$ is defined by
\begin{align}\label{eq101}
\frac{1}{T_q(\Omega)}=\inf\bigg\{\frac{\int_{\Omega}|\nabla U|^q dy}{[\int_{\Omega}|U|dy]^q}:U\in W_0^{1,q}(\Omega),\int_{\Omega}|U|dy>0\bigg\}.
\end{align}
It is illustrated in \cite{BM,HH} that the functional defined in (\ref{eq101}) has a unique minimizer $u\in W_0^{1,q}(\Omega)$ satisfying the following boundary value problem
\begin{align}\label{eq102}
\left\{
\begin{array}{lc}
\Delta_qu=-1\ \ \  \text{in} \ \ \ \Omega,\\
u=0,\ \ \ \ \ \ \ \ \text{on} \ \ \ \ \partial\Omega,\\
\end{array}
\right.
\end{align}
where $$\Delta_qu={\rm div}(|\nabla u|^{q-2}\nabla u)$$
is the $q$-Laplace operator.

When $q=2$, it is the so-called torsional rigidity $T(\Omega)$ (or $2$-torsional rigidity $T_2(\Omega)$) of a convex body $\Omega$.

Applying (\ref{eq102}) and the Gauss-Green formula, we have

\begin{align}\label{eq103}
\int_{\Omega}|\nabla u|^qdy=\int_{\Omega}udy,
\end{align}
from (\ref{eq101}) and (\ref{eq103}), it follows
\begin{align}\label{eq104}
T_q(\Omega)=\frac{(\int_{\Omega}udy)^q}{\int_{\Omega}|\nabla u|^qdy}=\bigg(\int_{\Omega}udy\bigg)^{q-1}=\bigg(\int_{\Omega}|\nabla u|^qdy\bigg)^{q-1}.
\end{align}

With the aid of Pohozaev-type identities \cite{PU}, the $q$-torsional rigidity formula (\ref{eq104}) can be given by
\begin{align}\label{eq105}
T_q(\Omega)^{\frac{1}{q-1}}=&\frac{q-1}{q+n(q-1)}\int_{S^{n-1}}h(\Omega,\xi)d\mu^{tor}_{q}(\Omega,\xi)\\
\nonumber=&\frac{q-1}{q+n(q-1)}\int_{S^{n-1}}h(\Omega,\xi)|\nabla u|^qdS(\Omega,\xi).
\end{align}
Denoting $\frac{q-1}{q+n(q-1)}=\frac{b}{a}$ and $\widetilde{T}_q(\Omega)=T_q(\Omega)^{\frac{1}{q-1}}$, the $q$-torsional measure $\mu^{tor}_{q}(\Omega,\eta)$ is defined by
\begin{align}\label{eq106}
\mu^{tor}_q(\Omega,\eta)=\int_{\nu^{-1}(\eta)}|\nabla u(y)|^qd\mathcal{H}^{n-1}(y)=\int_\eta|\nabla u(\nu^{-1}(x))|^qdS(\Omega,x),
\end{align}
for any Borel set $\eta\subseteq S^{n-1}$. Here, $\nu:\partial\Omega\rightarrow S^{n-1}$ is the Gauss map, and $\mathcal{H}^{n-1}$ is the $(n-1)$-dimensional Hausdorff measure.

Motivated by the dual curvature measure and the dual Minkowski problem in \cite{HY} and the foregoing works with respect to ($q$) torsional rigidity. In the present paper, we focus on considering the $p$-th dual Minkowski problem for $q$-torsional rigidity with $q>1$ in the dual Brunn-Minkowski theory. Firstly, we give the definition of $p$-th dual $q$-torsional measure.
\begin{definition}\label{def11} Let $\Omega\in\mathcal{K}^n_o$, $q>1$ and $p\in\mathbb{R}$. We define the $p$-th dual $q$-torsional measure by
\begin{align*}
\widetilde{Q}_{q,n-p}(\Omega,\eta)=\frac{b}{a}\int_{\alpha_\Omega^*(\eta)}\rho(\Omega,v)^p|\nabla u(r(\Omega,v))|^qdv,
\end{align*}
for each Borel $\eta\subset S^{n-1}$ and $r(\Omega,v)=\rho(\Omega,v)v$, where $\frac{b}{a}=\frac{q-1}{n(q-1)+q}$, $\rho(\Omega,\cdot)$ is the radial function of $\Omega$, $\alpha_\Omega^*$ is the reverse radial Gauss image on $S^{n-1}$ and $dv$ is the spherical measure on $S^{n-1}$ (see Sect.$2$ for definitions and notations).
\end{definition}

The  $p$-th dual $q$-torsional rigidity $\widetilde{Q}_{q,n-p}(\Omega)$ of $\Omega\in\mathcal{K}_o^n$ is denoted by 
\begin{align*}
\widetilde{Q}_{q,n-p}(\Omega)=\widetilde{Q}_{q,n-p}(\Omega,S^{n-1})=\frac{b}{a}\int_{S^{n-1}}\rho(\Omega,v)^p|\nabla u(r(\Omega,v))|^qdv.
\end{align*}

Naturally, the $p$-th dual Minkowski problem for $q$-torsional rigidity can be proposed as follows.
\begin{problem}\label{pro12}
Let $q>1$ and $p\in\mathbb{R}$. Given a nonzero finite Borel measure $\mu$ on $S^{n-1}$, what are the necessary and sufficient conditions on $\mu$ such that a convex body $\Omega\in\mathcal{K}_o^n$ whose $p$-th dual $q$-torsional measure $\widetilde{Q}_{q,n-p}(\Omega,\cdot)$ is equal to the given measure $\mu$?
\end{problem}

If the given measure $\mu$ is absolutely continuous with respect to the Lebesgue measure and $\mu$ has a smooth density function $f:S^{n-1}\rightarrow (0,\infty)$, then, solving Problem \ref{pro12} can be equivalently viewed as solving the following Monge-Amp\`{e}re equation on $S^{n-1}$ (see (\ref{eq311}) for details):
\begin{align*}
f(v)=\frac{b}{a}|\nabla h_{\Omega}(v)+h(v)v|^{p-n}h_{\Omega}(v)|\nabla u(\nu^{-1}_\Omega(v))|^q\det(h_{ij}(v)+h_\Omega(v)\delta_{ij}),
\end{align*}
equivalently,
\begin{align}\label{eq107}
f(v)=\frac{b}{a}(|\nabla h|^2+h^2)^{\frac{p-n}{2}}h_{\Omega}(v)|\nabla u(\nu^{-1}_\Omega(v))|^q\det(h_{ij}(v)+h_\Omega(v)\delta_{ij}).
\end{align}
Here, $h$ is the unknown function on $S^{n-1}$ to be found, $\nabla h$ and $h_{ij}$ denote the gradient vector and the Hessian matrix of $h$ with respect to an orthonormal frame on $S^{n-1}$, and $\delta_{ij}$ is the Kronecker delta.

If the factor
\begin{align*}
(|\nabla h|^2+h^2)^{\frac{p-n}{2}}h_{\Omega}(v)
\end{align*}
is omitted in Equation (\ref{eq107}), then (\ref{eq107}) will become the partial differential equation of
the Minkowski problem for $q$-torsional rigidity. If only the factor $(|\nabla h|^2+h^2)^{\frac{p-n}{2}}$ is omitted, then Equation (\ref{eq107}) becomes the partial differential equation associated with the logarithmic Minkowski problem to $q$-torsional rigidity. The gradient component in (\ref{eq107}) significantly increases the difficulty of the problem when it is compared to the Minkowski problem or the logarithmic Minkowski problem for $q$-torsional rigidity.

In the present paper, we will investigate smooth solutions to the normalized $p$-th dual Minkowski problem for $q$-torsional rigidity by method of a Gauss curvature flow. The normalized equation is
\begin{align}\label{eq108}
\frac{\frac{a}{b}\widetilde{Q}_{q,n-p}(\Omega)}{\int_{S^{n-1}}f(v)dv}f(v)=(|\nabla h|^2+h^2)^{\frac{p-n}{2}}h_{\Omega}(v)|\nabla u(\nu^{-1}_\Omega(v))|^q\det(h_{ij}(v)+h_\Omega(v)\delta_{ij}).
\end{align}
By homogeneity, it is clear to see that if $h(x)$ is a solution of (\ref{eq108}), then $\frac{a}{b}\bigg[\frac{\frac{a}{b}\widetilde{Q}_{q,n-p}(\Omega)}{\int_{S^{n-1}}f(x)dx}\bigg]^{-p}h$ is a solution of (\ref{eq107}).
The Gauss curvature flow is particularly essential, which was first introduced and studied by Firey \cite{FI} to model the shape change of worn stones. It can mainly be used to study the existence of smooth solutions to the famous Minkowski (type) problems. For examples, Chen, Huang and Zhao \cite{CC} obtained the existence of smooth even solutions to the $L_p$ dual Minkowski problem by a Gauss curvature flow. Liu and Lu \cite{LY} solved the dual Orlicz-Minkowski problem and obtained the existence of smooth solutions by a Gauss curvature flow. Various Gauss curvature flows have been extensively studied, see \cite{AB, BR, BR1, CK0, HJ00, HJ, LY1, ZX0} and the references therein.

In this article, we consider the $p$-th dual Minkowski problem for $q$-torsional rigidity with $q>1$. We confirm the existence of smooth even solutions for $p<n$ ($p\neq 0$) to the $p$-th dual Minkowski problem for $q$-torsional rigidity by method of a Gauss curvature flow. Specially, we also obtain the smooth non-even solutions with $p<0$ for this problem.

Let $\partial\Omega_0$ be a smooth, closed and origin-symmetric strictly convex hypersurface in $\mathbb{R}^n$ for $p<n$ ($p\neq 0$) and $f$ is a positive smooth even function on $S^{n-1}$. Specially, for $p<0$, let $\partial\Omega_0$ be a smooth, closed and strictly convex hypersurface in $\mathbb{R}^n$ containing the origin in its interior and $f$ is a positive smooth function on $S^{n-1}$. We construct and consider the long-time existence and convergence of a following Gauss curvature flow which is a family of convex hypersurfaces $\partial\Omega_t$  parameterized by  smooth maps $X(\cdot ,t):
S^{n-1}\times (0, \infty)\rightarrow \mathbb{R}^n$
satisfying the initial value problem
\begin{align}\label{eq109}
\left\{
\begin{array}{lc}
\frac{\partial X(x,t)}{\partial t}=-\lambda(t)f(v)\frac{(|\nabla h|^2+h^2)^{\frac{n-p}{2}}}{|\nabla u(X,t)|^q}\mathcal{K}
(x,t)v+X(x,t),  \\
X(x,0)=X_0(x),\\
\end{array}
\right.
\end{align}
where $\mathcal{K}(x,t)$ is the Gauss curvature of hypersurface $\partial\Omega_t$,  $v=x$ is the
outer unit normal at $X(x,t)$, $X\cdot v $ represents the standard inner product of $X$ and $v$, and $\lambda(t)$ is defined as follows
\begin{align*}\lambda(t)=\frac{\int_{S^{n-1}}\rho^p|\nabla u|^qdv}{\int_{S^{n-1}}f(x)dx}=\frac{\frac{a}{b}\widetilde{Q}_{q,n-p}(\Omega_t)}{\int_{S^{n-1}}f(x)dx},\end{align*}
where $\rho$ is the radial function of convex body $\Omega_t$ which enclosed by convex hypersurface $\partial \Omega_t$.

In order to discuss conveniently the flow (\ref{eq109}), we introduce the functional for $p\neq 0$ as follows:
\begin{align}\label{eq110}
\Phi(\Omega_t)=\frac{\int_{S^{n-1}}\log h(x,t)f(x)dx}{\int_{S^{n-1}}f(x)dx}-\log\bigg(\int_{S^{n-1}}\rho^p(v,t)|\nabla u|^qdv\bigg)^\frac{1}{p(1+q)}.
\end{align}
Here, $h(x,t)$ and $\rho(v,t)$ are the support function and the radial function of convex body $\Omega_t$, respectively.

Combining Equation (\ref{eq108}) with the flow (\ref{eq109}), we establish the following result in this article.

\begin{theorem}\label{thm13}
Let $q>1$, $p<n$ ($p\neq 0$) and $\partial\Omega_0$ be a smooth, closed and origin-symmetric strictly convex hypersurface in $\mathbb{R}^n$, $f$ is a positive smooth even function on $S^{n-1}$. Then, the flow (\ref{eq109}) has a unique smooth even convex solution $\partial\Omega_t=X(S^{n-1},t)$. Moreover, when $t\rightarrow\infty$, there is a subsequence of $\partial\Omega_t$ converges in $C^{\infty}$ to a smooth, closed, origin-symmetric strictly convex hypersurface $\partial\Omega_\infty$, the support function of convex body $\Omega_\infty$ enclosed by $\partial\Omega_\infty$ satisfies (\ref{eq108}).

Specially, for $p<0$, let $\partial\Omega_0$ be a smooth, closed and strictly convex hypersurface in $\mathbb{R}^n$ containing the origin in its interior,
$f$ is a positive smooth function on $S^{n-1}$. Then, the flow (\ref{eq109}) has a unique smooth non-even convex solution $\partial\Omega_t=X(S^{n-1},t)$. Moreover, when $t\rightarrow\infty$, there is a subsequence of $\partial\Omega_t$ converges in $C^{\infty}$ to a smooth, closed and strictly convex hypersurface $\partial\Omega_\infty$, the support function of convex body $\Omega_\infty$ enclosed by $\partial\Omega_\infty$ satisfies (\ref{eq108}).
\end{theorem}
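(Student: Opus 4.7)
The plan is to follow the now-standard Gauss curvature flow framework: reduce the flow (\ref{eq109}) to a scalar parabolic equation for the support function, exhibit a monotone Lyapunov functional, establish uniform a priori estimates, and then pass to a subsequential smooth limit which is identified via the monotonicity as a solution of the stationary equation (\ref{eq108}). Concretely, using $X\cdot v=h$ and $\mathcal{K}=1/\det(h_{ij}+h\delta_{ij})$, I would first rewrite (\ref{eq109}) as
\begin{align*}
\partial_t h(x,t)=-\lambda(t)\,f(x)\,\frac{(|\nabla h|^2+h^2)^{(n-p)/2}}{|\nabla u(\nu^{-1}_{\Omega_t}(x))|^q\,\det(h_{ij}+h\delta_{ij})}+h(x,t),
\end{align*}
coupled to the boundary-value problem (\ref{eq102}) on $\Omega_t$ which determines $u$ and hence the factor $|\nabla u|^q$ on $\partial\Omega_t$. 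Short-time existence, uniqueness and strict convexity preservation for a smooth, strictly convex initial hypersurface follow from the standard parabolic Monge--Amp\`ere theory once one verifies that the coefficient $|\nabla u|^q$ depends continuously on $\Omega_t$ in a suitable H\"older topology, which is a known regularity fact for the $q$-Laplace potential.

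Next I would establish that the functional $\Phi(\Omega_t)$ in (\ref{eq110}) is monotone along the flow. Differentiating $\Phi$ and substituting the evolution equation above, the specific choice of $\lambda(t)$ makes a leading term cancel, and what remains is controlled by Jensen's inequality, yielding $\tfrac{d}{dt}\Phi(\Omega_t)\le 0$ with equality precisely when (\ref{eq108}) holds. This gives a Lyapunov quantity, and its boundedness combined with the explicit form of $\Phi$ will be the main input for the $C^0$ estimates.

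The analytic heart of the argument is the a priori estimates and the extraction of the limit. In the even case ($p<n$, $p\ne 0$), origin-symmetry forces $\min h\gtrsim\max h$; coupled with $\Phi$-boundedness this gives uniform two-sided $C^0$ bounds on $h$ (hence on the radial function). For $p<0$ in the non-even case, the $C^0$ lower bound follows because $\int\rho^{p}|\nabla u|^q\,dv$ blows up as $\min\rho\to 0$, while the upper bound comes from the other term of $\Phi$. With $C^0$ bounds in hand I would apply the maximum principle to auxiliary functions built from the principal radii $b_{ij}=h_{ij}+h\delta_{ij}$ and from the normal speed
\begin{align*}
\chi:=\lambda(t)\,f\,\frac{(|\nabla h|^2+h^2)^{(n-p)/2}}{|\nabla u|^q\,\det b_{ij}},
\end{align*}
to bound $\mathcal{K}$ and the largest principal radius from above and away from zero. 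Uniform $C^2$ estimates render the equation uniformly parabolic, so Krylov--Safonov plus Schauder bootstrapping give uniform $C^{k,\alpha}$ bounds in space and time. Finally, since $\Phi$ is monotone and bounded, $\int_0^\infty -\tfrac{d}{dt}\Phi(\Omega_t)\,dt<\infty$, so along a subsequence $t_j\to\infty$ one has $\tfrac{d}{dt}\Phi(\Omega_{t_j})\to 0$; combined with $C^\infty$ precompactness, $\partial\Omega_{t_j}\to\partial\Omega_\infty$ smoothly, and the equality case of the monotonicity identifies $h_\infty$ as a solution of the normalized Monge--Amp\`ere equation (\ref{eq108}).

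The step I expect to be the main obstacle is controlling the non-local factor $|\nabla u|^q$ and its first and second tangential derivatives uniformly along the flow, since $u$ is the solution of the degenerate-elliptic $q$-Laplace boundary-value problem (\ref{eq102}) on the evolving domain $\Omega_t$, and the principal-radius maximum principle requires spatial derivatives of $|\nabla u(\nu^{-1}_{\Omega_t}(\cdot))|^q$ with constants independent of $t$. This coupling between a curvature flow and an auxiliary nonlinear boundary-value problem, compounded by the gradient weight $(|\nabla h|^2+h^2)^{(n-p)/2}$ which already complicates the estimate in the classical dual Minkowski setting, is the genuinely new difficulty here. The key technical input will therefore be uniform boundary $C^{2,\alpha}$ estimates for the $q$-torsional potential on the family $\{\Omega_t\}$ in terms of the support-function bounds; once such domain-uniform estimates are available, the remaining steps run parallel to the dual Minkowski flow of Chen--Huang--Zhao and to the authors' earlier work on the Orlicz Minkowski problem for $q$-torsional rigidity.
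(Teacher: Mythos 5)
Your proposal follows essentially the same route as the paper: scalar reduction of the flow to the support function, monotonicity of the functional $\Phi$ in (\ref{eq110}) with equality characterizing (\ref{eq108}), uniform $C^0$--$C^2$ estimates, Krylov--Schauder bootstrapping, and subsequential convergence via $\int_0^\infty|\partial_t\Phi|\,dt<\infty$; you also correctly single out the nonlocal factor $|\nabla u|^q$ as the new difficulty, which the paper handles through Lemma \ref{lem56} and the regularity theory for the $q$-Laplace potential. One detail to correct: in the even case the $C^0$ lower bound does not come from ``origin-symmetry forces $\min h\gtrsim\max h$'' (false for thin origin-symmetric bodies); the paper instead uses the conservation law $\widetilde{Q}_{q,n-p}(\Omega_t)=\widetilde{Q}_{q,n-p}(\Omega_0)$ (Lemma \ref{lem41}) together with a Blaschke-selection contradiction argument, and for $p<0$ it replaces your blow-up heuristic by a direct maximum-principle estimate at the spatial extrema of $h$. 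These are minor variations in mechanism; the overall architecture matches the paper's proof.
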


This paper is organized as follows. We collect some necessary background materials in Section \ref{sec2}. In Section \ref{sec3}, we introduce the $p$-th dual $q$-torsional measure and $p$-th dual $q$-torsional rigidity. Moreover, some properties of $p$-th dual $q$-torsional measure and the variational formula of $p$-th dual $q$-torsional rigidity are given. In Section \ref{sec4}, we give the scalar form  of flow (\ref{eq109}) by the support function and discuss the monotonicity of two important functionals along the flow (\ref{eq109}). In Section \ref{sec5}, we give the priori estimates for solutions to the flow (\ref{eq109}). We obtain the convergence of flow (\ref{eq109}) and complete the proof of Theorem \ref{thm13} in Section \ref{sec6}.

\section{\bf Preliminaries}\label{sec2}
In this subsection, we give a brief review of some relevant notions and terminologies required for this article. One can refer to \cite{UR}, \cite{HY} and a book of Schneider \cite{SC} for details.
\subsection{Convex bodies and star bodies} Let $\mathbb{R}^n$ be the $n$-dimensional Euclidean space. The origin-centered unit ball $\{y\in\mathbb{R}^n:|y|\leq 1\}$ is always denoted by $\mathcal{B}$, and its boundary by $S^{n-1}$. Write $\omega_n$ for the volume of $\mathcal{B}$ and recall that its surface area is $n\omega_n$.

Let $\partial\Omega$ be a smooth, closed and strictly convex hypersurface containing the origin in its interior. The support function of a convex body $\Omega$ enclosed by $\partial\Omega$ is defined by
\begin{align*}h_\Omega(\xi)=h(\Omega,\xi)=\max\{\xi\cdot y:y\in\Omega\},\quad \forall\xi\in S^{n-1},\end{align*}
and the radial function of $\Omega$ with respect to $o$ (origin) $\in\mathbb{R}$ is defined by
\begin{align*}\rho_{\Omega}(v)=\rho(\Omega,v)=\max\{c>0:cv\in\Omega\},\quad  v\in S^{n-1}.\end{align*}
We easily obtain that the support function is homogeneous of degree $1$ and the radial function is homogeneous of degree $-1$.

For $\Omega\in\mathcal{K}_o^n$, its polar body $\Omega^*\in\mathcal{K}_o^n$ is defined by
\begin{align*}
\Omega^*=\{z\in\mathbb{R}^n:z\cdot y\leq 1, \text{for all}~~ y\in \Omega\}.
\end{align*}
It is clear that
\begin{align*}
\rho_\Omega=\frac{1}{h_{\Omega^*}} \quad \text{and} \quad h_\Omega=\frac{1}{\rho_{\Omega^*}}.
\end{align*}

The Minkowski combination, $a\Omega_1+b\Omega_2\in\mathcal{K}^n$, is defined as, for $\Omega_1,\Omega_2\in\mathcal{K}^n$ and $a,b>0$,
\begin{align*}
a\Omega_1+b\Omega_2={ay+bz:y\in \Omega_1, z\in \Omega_2},
\end{align*}
and
\begin{align*}
h(a\Omega_1+b\Omega_2,\cdot)=ah(\Omega_1,\cdot)+bh(\Omega_2,\cdot).
\end{align*}

For $a,b>0$, $q\neq 0$, the $L_q$-combination of $\Omega_1,\Omega_2\in\mathcal{K}_o^n$ is defined by
\begin{align*}
a\cdot \Omega_1+_qb\cdot \Omega_2=\bigcap_{v\in S^{n-1}}\bigg\{y\in \mathbb{R}^n:y\cdot v\leq [ah(\Omega_1,v)^q+bh(\Omega_2,v)^q]^{\frac{1}{q}}\bigg\}.
\end{align*}
If $q\geq 1$, $[ah(\Omega_1,\cdot)^q+bh(\Omega_2,\cdot)^q]^{\frac{1}{q}}$ is the support function of a convex body, but it is not necessary when $q<1$. For $q=0$,
\begin{align*}
a\cdot \Omega_1+_0b\cdot \Omega_2=\bigcap_{v\in S^{n-1}}\bigg\{y\in \mathbb{R}^n:y\cdot v\leq h(\Omega_1,v)^ah(\Omega_2,v)^b\bigg\}.
\end{align*}

Let $\Omega_1, \Omega_2\in\mathbb{R}^n$ be compact and star-shaped (with respect to the origin). If $\rho_\Omega$ is positive and continuous, then $\Omega$ is called a star body with respect to the origin. We write $\mathcal{S}^n$ for the space of all star bodies in $\mathbb{R}^n$. For real $a,b\geq 0$, the radial combination, $a\Omega_1\widetilde{+}b\Omega_2\in\mathbb{R}^n$, is the compact star-shaped set defined by
\begin{align*}
a\cdot \Omega_1\widetilde{+}b\cdot \Omega_2=\bigg\{ay_1+by_2:y_1\in\Omega_1~~\text{and}~~y_2\in\Omega_2, \text{whenever}~~y_1\cdot y_2=|y_1||y_2| \bigg\}.
\end{align*}
Obviously, $y_1\cdot y_2=|y_1||y_2|$ means that either $y_2=ky_1$ or $y_1=ky_2$ for some $k\geq 0$. The radial function of radial combination of two star-shaped sets is the combination of their radial functions, i.e.,
\begin{align*}
\rho(a\Omega_1\widetilde{+}b\Omega_2,\cdot)=a\rho(\Omega_1,\cdot)+b\rho(\Omega_2,\cdot).
\end{align*}
For real $p$, the radial $p$-combination $a\cdot \Omega_1\widetilde{+}_pb\cdot\Omega_2$ is defined by Lutwak \cite{LE}
\begin{align*}
\rho(a\Omega_1\widetilde{+}_pb\Omega_2,\cdot)^p=a\rho(\Omega_1,\cdot)^p+b\rho(\Omega_2,\cdot)^p, \quad p\neq 0;
\end{align*}
\begin{align*}
\rho(a\Omega_1\widetilde{+}_0b\Omega_2,\cdot)=\rho(\Omega_1,\cdot)^a\rho(\Omega_2,\cdot)^b, \quad p= 0.
\end{align*}
\subsection{Gauss map and radial gauss map of a convex body}
For a convex body $\Omega\in\mathbb{R}^n$, its support hyperplane with outward unit normal vector $\xi\in S^{n-1}$ is represented by
\begin{align*}
H(\Omega,\xi)=\{y\in\mathbb{R}^n:y\cdot \xi=h(\Omega,\xi)\}.
\end{align*}
A boundary point of $\Omega$ which only has one supporting hyperplane is called a regular point, otherwise, it is a singular point. The set of singular points is denoted as $\sigma \Omega$, it is
well known that $\sigma \Omega$ has spherical Lebesgue measure 0.

For $\sigma\subset\partial\Omega$, the spherical image of $\sigma$ is denoted by
\begin{align*}
\bm\nu_\Omega(\sigma)=\{v\in S^{n-1}:y\in H_\Omega(v)~~\text{for some}~~y\in\sigma\}\subset S^{n-1}.
\end{align*}
For a Borel set $\eta\subset S^{n-1}$, the reverse spherical image of $\eta$ is defined by
\begin{align*}
\bm x_\Omega(\eta)=\{y\in \partial\Omega:y\in H_\Omega(v)~~\text{for some}~~v\in\eta\}\subset \partial\Omega.
\end{align*}

For a Borel set $\eta\subset S^{n-1}$, the surface area measure of $\Omega$ is defined as
\begin{align*}S(\Omega,\eta)=\mathcal{H}^{n-1}(\nu^{-1}_{\Omega}(\eta)),\end{align*}
where $\mathcal{H}^{n-1}$ is the $(n-1)$-dimensional Hausdorff measure. The Gauss map $\nu_\Omega:y\in\partial \Omega\setminus \sigma \Omega\rightarrow S^{n-1}$ is represented by
\begin{align*}\nu_\Omega(y)=\{\xi\in S^{n-1}:y\cdot \xi=h_\Omega(\xi)\}.\end{align*}
Here, $\partial \Omega\setminus \sigma \Omega$ is abbreviated as $\partial^\prime\Omega$, something we will often do. If one views the reciprocal Gauss curvature of a smooth convex body as a function of the outer unit normals of the body, then the surface area measure is extension to an arbitrary convex body (that is not necessarily smooth) of the reciprocal Gauss curvature. In fact, if
$\partial\Omega$ is of class $C^2$ and has everywhere positive curvature, then the surface area measure has a positive density,
\begin{align}\label{eq201}
\frac{dS(\Omega,v)}{dv}=\det(h_{ij}(v)+h_\Omega(v)\delta_{ij}),
\end{align}
where $h_{ij}$ is the Hessian matrix of $h_\Omega$ with respect to an orthonormal frame on $S^{n-1}$, $\delta_{ij}$ is the Kronecker delta, the determinant is precisely the reciprocal Gauss curvature of
$\partial\Omega$ at the point of $\partial\Omega$ whose outer unit normal is $v$, where the Radon-Nikodym derivative is with respect to the spherical Lebesgue measure.

Correspondingly, for a Borel set $\eta\subset S^{n-1}$, its inverse Gauss map is denoted by $\nu_\Omega^{-1}$,
\begin{align*}\nu_\Omega^{-1}(\eta)=\{y\in\partial \Omega:\nu_\Omega(y)\in\eta\}.\end{align*}
Specially, for a convex hypersurface $\partial\Omega$ of class $C^2$, the support function of $\Omega$ can be stated as
\begin{align*}
h(\Omega,x)=x\cdot \nu_\Omega^{-1}(x)=\nu_\Omega(X(x))\cdot X(x), \quad X(x)\in \partial\Omega.
\end{align*}
Moreover, the gradient of $h(\Omega, \cdot)$ satisfies
\begin{align}\label{eq202}
\nabla h(\Omega,x)=\nu_\Omega^{-1}(x).
\end{align}

For $g\in C(S^{n-1})$,
\begin{align*}
\int_{\partial\Omega\setminus \sigma\Omega}g(\nu_\Omega(y))d\mathcal{H}^{n-1}(y)=\int_{S^{n-1}}g(v)dS(\Omega,v).
\end{align*}
Furthermore, for $\mathcal{H}^n$ almost all $X\in\partial\Omega$,
\begin{align*}
\nabla u(X)=-|\nabla u(X)|\nu_\Omega(X)\quad \text{and}\quad |\nabla u|\in L^q(\partial\Omega, \mathcal{H}^n).
\end{align*}

For $\Omega\in\mathcal{K}_o^n$, define the radial map of $\Omega$,
\begin{align*}
r_\Omega:S^{n-1}\rightarrow\partial\Omega, \quad \text{by} \quad r_\Omega(v)=\rho_\Omega(v)v\in\partial\Omega \quad \text{for}\quad v\in S^{n-1}.
\end{align*}
Note that $r_\Omega^{-1}:\partial\Omega\rightarrow S^{n-1}$ is just the restriction to $\partial\Omega$ of the map $y\mapsto \bar{y}=\frac{y}{|y|}$.

For $\omega\subset S^{n-1}$, define the radial Gauss image of $\omega$ by
\begin{align*}
\bm\alpha_\Omega(\omega)=\bm\nu_\Omega(r_\Omega(\omega))\subset S^{n-1}.
\end{align*}
Thus, for $v\in S^{n-1}$, one has 
\begin{align*}
\bm\alpha_\Omega(v)=\{\xi\in S^{n-1}:r_\Omega(v)\in H_\Omega(\xi)\}.
\end{align*}

Define the radial Gauss map of a convex body $\Omega\in\mathcal{K}_o^n$,
\begin{align*}
\alpha:S^{n-1}\setminus\omega_\Omega\rightarrow S^{n-1}~~\text{by}~\alpha_\Omega=\nu_\Omega\circ r_\Omega,
\end{align*}
where $\omega_\Omega=r^{-1}_\Omega(\sigma_\Omega)$. Since $r^{-1}_\Omega$ is a bi-Lipschitz map between the spaces $\partial\Omega$ and $S^{n-1}$, it follows that $\omega_\Omega$ has spherical Lebesgue measure $0$. Observe that if $v\in S^{n-1}\setminus\omega_\Omega$, then $\bm\alpha_\Omega(v)$ contains only the element $\alpha_\Omega(v)$. Note that since both $\nu_\Omega$ and $r_\Omega$ are continuous, $\alpha_\Omega$ is continuous.

For a Borel set $\omega\subset S^{n-1}$, $\bm\alpha_\Omega(\omega)$ denotes its radail Gauss image and is defined as follows:
\begin{align*}
\bm\alpha_\Omega(\omega)=\bigg\{v\in S^{n-1}:\rho_\Omega(\xi)(\xi\cdot v)=h_\Omega(v)\bigg\},
\end{align*}
for $v\in\omega$. If a Borel set $\omega$ has only one element $v$, then we will abbreviate $\bm {\alpha}_\Omega(\{v\})$ as $\bm{\alpha}_\Omega(v)$. Denoted by $\omega_\Omega$ the subset of $S^{n-1}$ which makes $\bm{\alpha}_\Omega(v)$ contain more than one element for each $v\in\omega$. It is well known that $\omega_\Omega$ has the spherical Lebesgue measure $0$. Note that if $v\in S^{n-1}\setminus\omega_\Omega$, then  $\bm{\alpha}_\Omega(v)$ contains only the element $\alpha_\Omega(v)$.

For $\mathcal{H}^{n-1}$-integrable function $g:\partial\Omega\rightarrow\mathbb{R}$,
\begin{align*}
\int_{\partial\Omega}g(y)d\mathcal{H}^{n-1}(y)=\int_{S^{n-1}}g(\rho_\Omega(v)v)J(v)dv,
\end{align*}
where $J$ \cite{HY} is defined $\mathcal{H}^{n-1}$-a.e. on $S^{n-1}$ by
\begin{align*}
J(v)=\frac{\rho_\Omega(v)^n}{h_\Omega(\alpha_\Omega(v))}.
\end{align*}

For a Borel set $\eta\subset S^{n-1}$, its reverse radial Gauss image $\bm\alpha_\Omega^*(\eta)$ is represented as follows:
\begin{align*}
\bm\alpha_\Omega^*(\eta)=\bigg\{v\in S^{n-1}:\rho_\Omega(v)(\xi\cdot v)=h_\Omega(v)\bigg\},
\end{align*}
for $v\in\eta$. When a Borel set $\eta$ has only one element $v$, we will abbreviate $\bm\alpha_\Omega^*(\{v\})=\bm\alpha_\Omega^*(v)$. Denoted by $\eta_\Omega$ the subset of $S^{n-1}$ which makes $\bm\alpha_\Omega^*(v)$ contain more than one element for each $v\in\eta$. It is well known that the spherical Lebesgue measure of $\eta_\Omega$ is $0$.

For $v\in S^{n-1}$ and $\eta\subset S^{n-1}$, we have
\begin{align*}
v\in \bm\alpha_\Omega^*(\eta)~~\text {if and only if}~~\bm\alpha_\Omega(v)\cap \eta\neq \emptyset.
\end{align*}
Thus, $\bm\alpha_\Omega^*$ is monotone non-decreasing with respect to the set inclusion.

If we write $\bm\alpha_\Omega^*(\{v\})$ by $\bm\alpha_\Omega^*(v)$, this yields that
\begin{align*}
\omega\in\bm\alpha_\Omega^*(v)~~\text {if and only if}~~v\in\bm\alpha_\Omega(\omega).
\end{align*}
\subsection{Wull shapes and convex hulls}
Denote by $C(S^{n-1})$ the set of continuous functions on $S^{n-1}$ which is often equipped with the metric induced by the maximal norm. We write $C^+(S^{n-1})$ for the set of strictly positive functions in $C(S^{n-1})$. For any nonnegative $f\in C(S^{n-1})$, the Aleksandrov body is defined by
\begin{align*}
[f]=\bigcap_{v\in S^{n-1}}\bigg\{y\in \mathbb{R}^n:y\cdot v\leq f(v)\bigg\},
\end{align*}
the set is Wulff shape associated with $f$. Obviously, $[f]$ is a compact convex set containing the origin. If $\Omega$ is a compact convex set containing the origin, then $\Omega=[h_{\Omega}]$. The Aleksandrov convergence lemma is shown as follows: if the sequence $f_i\in C^+(S^{n-1})$ converges uniformly to $f\in C^+(S^{n-1})$, then $\lim_{i\rightarrow\infty}[f_i]=[f]$. The convex hull $\langle \rho \rangle$ generated by $\rho$ is a convex body defined by, for $\rho\in C^+(S^{n-1})$,
\begin{align*}
\langle \rho \rangle=\text{conv}\bigg\{\rho(v)v,v\in S^{n-1}\bigg\}.
\end{align*}

Clearly, $[f]^*=\langle \frac{1}{f} \rangle$ and if $\Omega\in\mathcal{K}_o^n$, $\langle \rho_\Omega \rangle=\Omega$.

Let $\Theta\subset S^{n-1}$ be a closed set, $f:\Theta\rightarrow\mathbb{R}$ be continuous, $\delta>0$ and $h_s:\Theta\rightarrow (0,\infty)$ be a continuous function is defined for any $s\in(-\delta,\delta)$ by (see \cite{HY}),
\begin{align*}
\log h_s(v)=\log h(v)+sf(v)+o(s,v),
\end{align*}
for any $v\in \Theta$ and the function $o(s,\cdot):\Theta\rightarrow \mathbb{R}$ is continuous and $\lim_{s\rightarrow 0}o(s,\cdot)/s=0$ uniformly on $\Theta$. Denoted by $[h_s]$ the Wulff shape determined by $h_s$, we shall call $[h_s]$ a logarithmic family of the Wulff shapes formed by $(h,f)$. On occasion, we shall write $[h_s]$ as $[h,f,s]$, and if $h$ happens to be the support function of a convex body $\Omega$ perhaps as $[\Omega,f,s]$, or as  $[\Omega,f,o,s]$, if required for clarity.

Let $g:\Theta\rightarrow \mathbb{R}$ be continuous and $\delta >0$. Let $\rho_s:\Theta\rightarrow (0,\infty)$ be a continuous function defined for each $s\in(-\delta,\delta)$ and each $v\in \Theta$ by
\begin{align*}
\log \rho_s(v)=\log \rho(v)+sg(v)+o(s,v).
\end{align*}
Denoted by $\langle \rho_s\rangle$ the convex hull generated by $\rho_s$, we shall call $\langle \rho_s\rangle$ a logarithmic family of the convex hulls generated by $(\rho,g)$. On occasion $\langle \rho_s\rangle$ as $\langle \rho, g, s\rangle$, and if $\rho$ happens to be the radial function of a convex body $\Omega$ perhaps as $\langle\Omega,g,s\rangle$, or as  $\langle\Omega,g,o,s\rangle$, if required for clarity.

The following lemma \cite[Lemma 4.2]{HY} shows that the support functions of a logarithmic family of convex hulls are differentiable with respect to the variational variable.
\begin{lemma}\label{lem21}
Let $\Theta\subset S^{n-1}$ be a closed set that is not contained in any closed hemisphere of $S^{n-1}$, $\rho_0:\Theta\rightarrow(0,\infty)$ and $g:\Theta\rightarrow\mathbb{R}$ be continuous. If $\langle \rho_s \rangle$ is a logarithmic family of convex hulls of $(\rho_0,g)$, then, for $p\in\mathbb{R}$,
\begin{align*}
\lim_{s\rightarrow 0}\frac{ h^{-p}_{\langle \rho_s \rangle}(v)- h^{-p}_{\langle \rho_0 \rangle}(v)}{s}=-ph^{-p}_{\langle \rho_0 \rangle}(v)g(\alpha^*_{\langle \rho_0 \rangle}(v)),
\end{align*}
for all $v\in S^{n-1}\setminus \eta_{\langle \rho_0 \rangle}$. Moreover, there exist $\delta_0>0$ and $M>0$ so that
\begin{align*}
|h^{-p}_{\langle \rho_s \rangle}(v)-h^{-p}_{\langle \rho_0 \rangle}(v)|\leq M|s|,
\end{align*}
for all $v\in S^{n-1}$ and all $s\in(-\delta_0,\delta_0)$.
\end{lemma}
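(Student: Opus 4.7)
The statement is a pointwise differentiability result for support functions of convex hulls generated by a one--parameter logarithmic family $\rho_s$. The entire argument rests on the explicit representation
\begin{align*}
h_{\langle \rho_s\rangle}(v)=\max_{\xi\in\Theta}\rho_s(\xi)(\xi\cdot v)^{+},\qquad v\in S^{n-1},
\end{align*}
which is positive for every $v$ because $\Theta$ is not contained in any closed hemisphere. Abbreviate $h_s:=h_{\langle\rho_s\rangle}$, $\xi_0:=\alpha^{*}_{\langle\rho_0\rangle}(v)$ (well--defined and unique precisely when $v\in S^{n-1}\setminus\eta_{\langle\rho_0\rangle}$), and write $\rho_s(\xi)=\rho_0(\xi)\exp(sg(\xi)+o(s,\xi))$ with $o(s,\cdot)/s\to 0$ uniformly on $\Theta$.

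First I would establish the two--sided sandwich
\begin{align*}
h_0(v)\,e^{sg(\xi_0)+o(s,\xi_0)}\;\le\;h_s(v)\;\le\;h_0(v)\,e^{sg(\xi_s)+o(s,\xi_s)},
\end{align*}
where $\xi_s\in\Theta$ is any maximizer for $h_s(v)$. The lower bound is obtained by plugging $\xi_0$ into the max defining $h_s(v)$; the upper bound follows from $\rho_0(\xi_s)(\xi_s\cdot v)^{+}\le h_0(v)$ together with the exponential relation. Next I would show $\xi_s\to\xi_0$ as $s\to 0$: by compactness of $\Theta$, any subsequential limit $\xi_*$ of $\xi_s$ satisfies $\rho_0(\xi_*)(\xi_*\cdot v)^{+}=h_0(v)$ (use continuity of $\rho_0$, $g$, and uniform smallness of $o(s,\cdot)$), and uniqueness of the maximizer at $v\notin\eta_{\langle\rho_0\rangle}$ forces $\xi_*=\xi_0$. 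Continuity of $g$ then gives $g(\xi_s)\to g(\xi_0)$, so the sandwich yields
\begin{align*}
\log h_s(v)=\log h_0(v)+s\,g(\xi_0)+o(s).
\end{align*}
Exponentiating $-p$ times and expanding, $h_s^{-p}(v)=h_0^{-p}(v)\bigl(1-ps\,g(\xi_0)+o(s)\bigr)$, which is the claimed derivative.

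For the uniform Lipschitz estimate valid at \emph{every} $v\in S^{n-1}$, I would argue as follows. Because $g$ is continuous on the compact set $\Theta$ it is bounded, and because $o(s,\cdot)/s\to 0$ uniformly there exists $\delta_0>0$ and a constant $C>0$ with $|sg(\xi)+o(s,\xi)|\le C|s|$ for all $\xi\in\Theta$ and $|s|<\delta_0$. Taking the $\max$ over $\xi$ in the representation of $h_s$ and comparing with $h_0$ shows $e^{-C|s|}h_0(v)\le h_s(v)\le e^{C|s|}h_0(v)$. Since $\Theta$ spans no closed hemisphere, $h_0$ is bounded between two positive constants on $S^{n-1}$; the same holds uniformly for $h_s$ when $|s|<\delta_0$. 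Applying the mean value theorem to $t\mapsto t^{-p}$ on the resulting compact interval produces the desired bound $|h_s^{-p}(v)-h_0^{-p}(v)|\le M|s|$.

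The main obstacle is the subsequential convergence $\xi_s\to\xi_0$: one must rule out maximizers drifting away from $\xi_0$ even though the perturbation $\rho_s\to\rho_0$ is only logarithmic in strength. This is exactly the place where the hypothesis $v\in S^{n-1}\setminus\eta_{\langle\rho_0\rangle}$ is essential, and any attempt to weaken it breaks the pointwise derivative formula (though the Lipschitz bound, which requires only uniform control and not uniqueness, survives for all $v$).
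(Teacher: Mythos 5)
Your proof is correct, and the paper itself does not prove this lemma --- it is quoted verbatim from Huang--Lutwak--Yang--Zhang \cite[Lemma 4.2]{HY}, whose proof (via their Lemma 4.1) is exactly your argument: the representation $h_{\langle\rho_s\rangle}(v)=\max_{\xi\in\Theta}\rho_s(\xi)(\xi\cdot v)$, the two-sided exponential sandwich, compactness plus uniqueness of the maximizer off $\eta_{\langle\rho_0\rangle}$ to get $\xi_s\to\xi_0$, and uniform boundedness of $sg+o(s,\cdot)$ for the Lipschitz estimate. The only step worth making explicit is why every maximizer of $\xi\mapsto\rho_0(\xi)(\xi\cdot v)$ over $\Theta$ lies in $\bm\alpha^*_{\langle\rho_0\rangle}(v)$ (the maximizing point $\rho_0(\xi_0)\xi_0$ lies on the supporting hyperplane, hence on $\partial\langle\rho_0\rangle$, so $\rho_{\langle\rho_0\rangle}(\xi_0)=\rho_0(\xi_0)$), which is what lets the singleton hypothesis force uniqueness.
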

\subsection{Gauss curvature on convex hypersurface } Suppose that $\Omega$ is parameterized by the inverse Gauss map $X:S^{n-1}\rightarrow \Omega$, that is $X(x)=\nu_\Omega^{-1}(x)$. Then, the support function $h$ of $\Omega$ can be computed by
\begin{align}\label{eq203}h(x)=x\cdot X(x) , \ \ x\in S^{n-1},\end{align}
where $x$ is the outer normal of $\Omega$ at $X(x)$. Let $\{e_1, e_2, \cdots, e_{n-1}\}$ be an orthonormal frame on $S^{n-1}$, denote $e_{ij}$ by the standard metric on the sphere $S^{n-1}$. Differentiating (\ref{eq203}), there has
\begin{align*}\nabla_ih=\nabla_ix\cdot X(x)+ x\cdot \nabla_iX(x),\end{align*}
since $\nabla_iX(x)$ is tangent to $\Omega$ at $X(x)$, thus,
\begin{align*}\nabla_ih=\nabla_ix\cdot X(x).\end{align*}

By differentiating (\ref{eq203}) twice, the second fundamental form $A_{ij}$ of $\Omega$ can be computed in terms of the support function,
\begin{align}\label{eq204}A_{ij} = \nabla_{ij}h + he_{ij},\end{align}
where $\nabla_{ij}=\nabla_i\nabla_j$ denotes the second order covariant derivative with respect to $e_{ij}$. The induced metric matrix $g_{ij}$ of $\Omega$ can be derived by Weingarten's formula,
\begin{align}\label{eq205}e_{ij}=\nabla_ix\cdot \nabla_jx= A_{ik}A_{lj}g^{kl}.\end{align}
The principal radii of curvature are the eigenvalues of the matrix $b_{ij} = A^{ik}g_{jk}$. Considering a smooth local orthonormal frame on $S^{n-1}$, by virtues of (\ref{eq204}) and (\ref{eq205}), there is
\begin{align}\label{eq206}b_{ij} = A_{ij} = \nabla_{ij}h + h\delta_{ij}.\end{align}
Then, the Gauss curvature $\mathcal{K}(x)$ of $X(x)\in\partial\Omega$ is given by
\begin{align}\label{eq207}\mathcal{K}(x) = (\det (\nabla_{ij}h + h\delta_{ij} ))^{-1}.\end{align}

\section{\bf The $p$-th dual $q$-torsional measure and variational formula}\label{sec3}
\subsection{\bf The $p$-th dual $q$-torsional measure}
Firstly, we state the following variational formula for $q$-torsional rigidity was proved in \cite{HJ2}.
\begin{lemma}\label{lem31}\cite[Lemma 3.2]{HJ2} Let $\mathcal{O}\subset \mathbb{R}$ be an interval containing the origin in its interior, and let $h_s(\xi)=h(s,\xi):\mathcal{O}\times S^{n-1}\rightarrow (0,\infty)$ and $h_s(\xi)\in\mathcal{E}$ such that the convergence in
\begin{align*}
h^\prime(0,\xi)=\lim_{s\rightarrow 0^+}\frac{h(s,\xi)-h(0,\xi)}{s}
\end{align*}
is uniform on $S^{n-1}$. Then
\begin{align*}
\frac{d\widetilde{T}_q([h_s])}{ds}\bigg|_{s=0^+}=\int_{S^{n-1}}h^\prime_+(0,\xi)d\mu_q^{tor}([h_0],\xi).
\end{align*}
Obviously, if $f\in C(S^{n-1})$, then
\begin{align}\label{eq301}
\int_{S^{n-1}}f(v)d\mu^{tor}_q(\Omega,v)=\int_{S^{n-1}}f(\alpha_\Omega(v))H(v)^qdv,
\end{align}
thus, from (\ref{eq105}), we obtain
\begin{align*}
\widetilde{T}_q(\Omega)=\frac{b}{a}\int_{S^{n-1}}\rho^n_{\Omega}(v)|\nabla u|^qdv,
\end{align*}
where $\mathcal{E}=\{h\in C_+^{2,\alpha}(S^{n-1}):h_{ij}+h\delta_{ij}~~\text{is positive definite}\}$ and $H(v)=|\nabla u(r_\Omega(v))|J(v)^{\frac{1}{q}}$, $r_\Omega(v)=\rho_\Omega(v)v$ and $v\in S^{n-1}$.
\end{lemma}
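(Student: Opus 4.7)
The lemma's content splits into (i) the variational formula, which is the substantive part, and (ii) the change-of-variables identity (\ref{eq301}) together with its consequence $\widetilde T_q(\Omega)=\tfrac{b}{a}\int_{S^{n-1}}\rho_\Omega^n|\nabla u|^q\,dv$. My plan is to deduce (i) by comparing the two equivalent representations of $\widetilde T_q$ furnished by (\ref{eq104}), and to obtain (ii) from a direct polar change of variables using the Jacobian $J$ recorded in Section~\ref{sec2}.

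For (i), let $u_s$ denote the $q$-torsion function of $\Omega_s=[h_s]$ and let $\dot u_0$ be its Eulerian derivative at $s=0^+$. The first representation $\widetilde T_q(\Omega_s)=\int_{\Omega_s}u_s\,dy$ together with the Reynolds transport theorem gives
\begin{align*}
\frac{d\widetilde T_q(\Omega_s)}{ds}\Big|_{s=0^+}=\int_{\Omega_0}\dot u_0\,dy,
\end{align*}
the boundary contribution vanishing because $u_0\equiv 0$ on $\partial\Omega_0$. The alternative representation $\widetilde T_q(\Omega_s)=\int_{\Omega_s}|\nabla u_s|^q\,dy$, differentiated the same way, yields
\begin{align*}
\frac{d\widetilde T_q(\Omega_s)}{ds}\Big|_{s=0^+}=q\!\int_{\Omega_0}|\nabla u_0|^{q-2}\nabla u_0\cdot\nabla\dot u_0\,dy+\int_{\partial\Omega_0}|\nabla u_0|^qV_n\,d\mathcal H^{n-1},
\end{align*}
where $V_n$ is the outward normal speed of $\partial\Omega_s$ at $s=0^+$. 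Differentiating $u_s|_{\partial\Omega_s}\equiv 0$ and using $\nabla u_0=-|\nabla u_0|\nu_{\Omega_0}$ gives the trace identity $\dot u_0=|\nabla u_0|V_n$ on $\partial\Omega_0$, while integration by parts against $-\Delta_q u_0=1$ converts the interior integral into $q\int_{\Omega_0}\dot u_0\,dy-q\int_{\partial\Omega_0}|\nabla u_0|^qV_n\,d\mathcal H^{n-1}$. Equating the two expressions for the derivative then collapses to $\int_{\Omega_0}\dot u_0\,dy=\int_{\partial\Omega_0}|\nabla u_0|^qV_n\,d\mathcal H^{n-1}$. Since the Hadamard variation of the support function gives $V_n(\nu_{\Omega_0}(y))=h'_+(0,\nu_{\Omega_0}(y))$, the definition (\ref{eq106}) of $\mu_q^{tor}$ rewrites the boundary integral as $\int_{S^{n-1}}h'_+(0,\xi)\,d\mu_q^{tor}([h_0],\xi)$, which is the claimed formula.

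For (ii), plug the definition (\ref{eq106}) into $\int f\,d\mu_q^{tor}(\Omega,\cdot)$, then pass from the surface integral on $\partial\Omega$ to a spherical integral via $y=r_\Omega(v)=\rho_\Omega(v)v$ with Jacobian $J(v)=\rho_\Omega(v)^n/h_\Omega(\alpha_\Omega(v))$, using $\nu_\Omega\circ r_\Omega=\alpha_\Omega$ and $H(v)^q=|\nabla u(r_\Omega(v))|^qJ(v)$; this gives (\ref{eq301}) directly. Applying (\ref{eq301}) with $f=h_\Omega$ to (\ref{eq105}), and using the identity $h_\Omega(\alpha_\Omega(v))J(v)=\rho_\Omega(v)^n$, yields $\widetilde T_q(\Omega)=\tfrac{b}{a}\int_{S^{n-1}}\rho_\Omega^n|\nabla u|^q\,dv$.

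The hard part is the rigorous justification of the Hadamard step in (i) within the low regularity of the $q$-Laplace setting: existence and $L^1$-integrability of $\dot u_0$, legitimacy of the Reynolds differentiation and of the interior integration by parts, and the boundary identity $\dot u_0=|\nabla u_0|V_n$ in an appropriate trace sense. These are precisely the technicalities encoded in the assumption $h_s\in\mathcal E$ and handled in the cited \cite{HJ2}.
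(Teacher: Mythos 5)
Your proposal is correct. Note, however, that the paper does not actually prove the variational formula: it is quoted verbatim as \cite[Lemma 3.2]{HJ2}, and the only argument the paper supplies for this lemma is the two-line derivation of (\ref{eq301}) by the polar change of variables $y=r_\Omega(v)$ with Jacobian $J(v)=\rho_\Omega(v)^n/h_\Omega(\alpha_\Omega(v))$, followed by substituting $f=h_\Omega$ into (\ref{eq105}) and using $h_\Omega(\alpha_\Omega(v))J(v)=\rho_\Omega(v)^n$. Your part (ii) is exactly this computation. Your part (i) goes further and reconstructs the content of the cited lemma by the standard Hadamard/Reynolds argument: the two representations $\widetilde T_q=\int_\Omega u\,dy=\int_\Omega|\nabla u|^q\,dy$ from (\ref{eq103})--(\ref{eq104}), the trace identity $\dot u_0=|\nabla u_0|V_n$ from differentiating the Dirichlet condition, and integration by parts against $\Delta_q u_0=-1$; the cancellation $(q-1)\int_{\Omega_0}\dot u_0=(q-1)\int_{\partial\Omega_0}|\nabla u_0|^qV_n$ is valid precisely because $q>1$, and the identification $V_n=h'_+(0,\xi)$ together with (\ref{eq106}) gives the stated formula. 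This is the same mechanism used in the torsional-rigidity literature the paper relies on, so your route is not at odds with the paper's source, only more explicit. You are also right to flag that the genuine work lies in justifying the existence and integrability of $\dot u_0$ and the legitimacy of the transport and integration-by-parts steps in the degenerate $q$-Laplace setting; the paper sidesteps all of this by citation, so no gap in your argument is exposed by comparison, but a self-contained proof would need those estimates (shape differentiability of $u_s$, uniform $C^{1,\alpha}$ bounds near $\partial\Omega_0$), which is exactly what the hypothesis $h_s\in\mathcal E$ and the reference \cite{HJ2} are there to provide.
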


\begin{proposition}\label{pro32}Let $\rho_0:S^{n-1}\rightarrow \mathbb{R}$ and $g:S^{n-1}\rightarrow \mathbb{R}$ be continuous. If $\langle \rho_s\rangle$ is a logarithmic family of convex hulls of $(\rho_0,g)$, then, for $q>1$,
\begin{align*}
\lim_{s\rightarrow 0}\frac{\widetilde{T}_q(\langle\rho_s\rangle)-\widetilde{T}_q(\langle\rho_0\rangle)}{s}=\int_{S^{n-1}}g(v)\rho^n_{\langle\rho_0\rangle}(v)|\nabla u(r_{\langle\rho_0\rangle}(v))|^qdv.
\end{align*}
\end{proposition}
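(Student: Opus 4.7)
The plan is to reduce the variational formula for $\widetilde{T}_q$ along the family of convex hulls $\langle \rho_s \rangle$ to the already-established Wulff-shape variational formula (Lemma \ref{lem31}), via the duality between convex hulls and support functions encoded in Lemma \ref{lem21}.

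First I would set $\Omega_s = \langle \rho_s \rangle$ and apply Lemma \ref{lem21} with $p = -1$ to the family $\{\rho_s\}$, yielding, for $v \in S^{n-1}\setminus \eta_{\Omega_0}$,
\[
\lim_{s\to 0}\frac{h_{\Omega_s}(v) - h_{\Omega_0}(v)}{s} = h_{\Omega_0}(v)\, g(\alpha^*_{\Omega_0}(v)),
\]
together with the uniform Lipschitz bound $|h_{\Omega_s}(v) - h_{\Omega_0}(v)| \leq M|s|$, which gives the required uniform control on the difference quotients. With this derivative in hand, Lemma \ref{lem31} applied to $h_s = h_{\Omega_s}$ produces
\[
\frac{d\widetilde{T}_q(\Omega_s)}{ds}\bigg|_{s=0} = \int_{S^{n-1}} h_{\Omega_0}(\xi)\, g(\alpha^*_{\Omega_0}(\xi))\, d\mu_q^{tor}(\Omega_0,\xi).
\]

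Next, I would transfer this integral from $d\mu_q^{tor}(\Omega_0,\cdot)$ to the spherical Lebesgue measure via identity (\ref{eq301}) with $f(v) = h_{\Omega_0}(v) g(\alpha^*_{\Omega_0}(v))$, invoking the almost-everywhere inverse relation $\alpha^*_{\Omega_0}\circ \alpha_{\Omega_0} = \mathrm{id}$ on $S^{n-1}\setminus \omega_{\Omega_0}$. Since $H(v)^q = |\nabla u(r_{\Omega_0}(v))|^q J(v)$ and $J(v) = \rho_{\Omega_0}(v)^n / h_{\Omega_0}(\alpha_{\Omega_0}(v))$, the factor $h_{\Omega_0}(\alpha_{\Omega_0}(v))$ coming from $f\circ \alpha_{\Omega_0}$ cancels the denominator of $J(v)$, leaving exactly $\int_{S^{n-1}} g(v)\, \rho_{\Omega_0}(v)^n\, |\nabla u(r_{\Omega_0}(v))|^q\, dv$, which is the desired formula.

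The main technical obstacle is the compatibility of hypotheses: Lemma \ref{lem31} is stated with $h_s\in \mathcal{E}$, whereas a logarithmic family of convex hulls built from merely continuous $(\rho_0,g)$ need not have $C^{2,\alpha}$ support functions with positive principal radii. I would bridge this gap by a standard approximation argument, replacing $\rho_0$ and $g$ by smooth approximants $\rho_0^{(k)}, g^{(k)}$ for which the induced support functions lie in $\mathcal{E}$, establishing the identity for each $k$, and then passing to the limit $k\to\infty$ using the uniform Lipschitz bound from Lemma \ref{lem21} to dominate difference quotients, together with the continuity of $\widetilde{T}_q$ and of the radial-function integrand under Hausdorff convergence.
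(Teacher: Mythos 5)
Your proposal follows essentially the same route as the paper's proof: apply Lemma \ref{lem21} with $p=-1$ to differentiate the support functions of the convex hulls, feed the result into the Wulff-shape variational formula of Lemma \ref{lem31}, and then convert the $\mu_q^{tor}$-integral to a spherical Lebesgue integral via (\ref{eq301}) together with the cancellation $h_{\langle\rho_0\rangle}(\alpha_{\langle\rho_0\rangle}(v))H(v)^q=\rho_{\langle\rho_0\rangle}^n(v)|\nabla u(r_{\langle\rho_0\rangle}(v))|^q$. The only difference is that you explicitly flag and propose to bridge the regularity mismatch between the hypothesis $h_s\in\mathcal{E}$ of Lemma \ref{lem31} and a merely continuous logarithmic family, a point the paper passes over in silence; this is a reasonable extra precaution rather than a divergence in method.
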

\begin{proof}
Using the dominated convergence theorem,  Lemma \ref{lem31}, Lemma \ref{lem21} and (\ref{eq301}), we get
\begin{align*}
&\lim_{s\rightarrow 0}\frac{\widetilde{T}_q(\langle\rho_s\rangle)-\widetilde{T}_q(\langle\rho_0\rangle)}{s}\\
=&\int_{S^{n-1}}\lim_{s\rightarrow 0}\frac{h_{\langle \rho_s\rangle}(\xi)-h_{\langle \rho_0\rangle}(\xi)}{s}d\mu_q^{tor}(\langle \rho_0\rangle,\xi)\\
=&\int_{S^{n-1}}g(\alpha^*_{\langle \rho_0\rangle}(\xi))h_{\langle \rho_0\rangle}(\xi)d\mu_q^{tor}(\langle \rho_0\rangle,\xi)\\
=&\int_{S^{n-1}}g(v)h_{\langle \rho_0\rangle}(\alpha_{\langle \rho_0\rangle}(v))H(v)^qdv\\
=&\int_{S^{n-1}}g(v)\rho_{\langle \rho_0\rangle}^n(v)|\nabla u(r_{\langle\rho_0\rangle}(v))|^qdv.
\end{align*}
\end{proof}
\begin{corollary}\label{cor33}Let $\Omega_1, \Omega_2\in\mathcal{K}_o^n$, $q>1$ and $p\neq 0$, then
\begin{align*}
\lim_{s\rightarrow 0}\frac{\widetilde{T}_q(\langle\rho_{\Omega_1\widetilde{+}_ps\cdot\Omega_2}\rangle)-\widetilde{T}_q(\langle\rho_{\Omega_1}\rangle)}{s}=
\frac{1}{p}\int_{S^{n-1}}\rho_{\Omega_2}(v)^p\rho_{\Omega_1}(v)^{n-p}|\nabla u(r_{\Omega_1}(v))|^qdv.
\end{align*}
\end{corollary}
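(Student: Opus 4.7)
The natural strategy is to view the radial $p$-combination $\Omega_1\widetilde{+}_p s\cdot\Omega_2$ as a logarithmic family of convex hulls based at $\Omega_1$, and then invoke Proposition \ref{pro32} directly. Observe that since $\Omega_1,\Omega_2\in\mathcal{K}_o^n$, both radial functions $\rho_{\Omega_1}, \rho_{\Omega_2}$ are continuous and strictly positive on $S^{n-1}$, and hence $\Omega_1\widetilde{+}_p s\cdot\Omega_2\in\mathcal{K}_o^n$ for all sufficiently small $|s|$. In particular, writing
\begin{align*}
\rho_s(v):=\rho\bigl(\Omega_1\widetilde{+}_p s\cdot\Omega_2,\,v\bigr)=\bigl(\rho_{\Omega_1}(v)^p+s\,\rho_{\Omega_2}(v)^p\bigr)^{1/p},
\end{align*}
we have $\langle \rho_s\rangle=\Omega_1\widetilde{+}_p s\cdot\Omega_2$ since each summand lies in $\mathcal{K}_o^n$.

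The first concrete step is a Taylor expansion of $\log\rho_s$ in $s$. Since $\rho_{\Omega_1}$ is bounded below by a positive constant on $S^{n-1}$, one gets
\begin{align*}
\log\rho_s(v)=\log\rho_{\Omega_1}(v)+\frac{1}{p}\log\!\Bigl(1+s\,\rho_{\Omega_2}(v)^p\rho_{\Omega_1}(v)^{-p}\Bigr)=\log\rho_{\Omega_1}(v)+s\,g(v)+o(s,v),
\end{align*}
where
\begin{align*}
g(v)=\frac{1}{p}\,\frac{\rho_{\Omega_2}(v)^p}{\rho_{\Omega_1}(v)^p},
\end{align*}
and the remainder $o(s,v)/s\to 0$ uniformly on $S^{n-1}$ because $\rho_{\Omega_2}^p/\rho_{\Omega_1}^p$ is bounded and continuous. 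This exhibits $\langle \rho_s\rangle$ as a logarithmic family of convex hulls generated by $(\rho_{\Omega_1},g)$ in the sense of Section \ref{sec2}.

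Applying Proposition \ref{pro32} with this choice of $g$ yields
\begin{align*}
\lim_{s\to 0}\frac{\widetilde{T}_q(\langle\rho_s\rangle)-\widetilde{T}_q(\langle\rho_{\Omega_1}\rangle)}{s}
=\int_{S^{n-1}}\frac{1}{p}\,\frac{\rho_{\Omega_2}(v)^p}{\rho_{\Omega_1}(v)^p}\,\rho_{\Omega_1}(v)^n\,\bigl|\nabla u(r_{\Omega_1}(v))\bigr|^q\,dv,
\end{align*}
which simplifies to the asserted identity after combining the powers of $\rho_{\Omega_1}$. The only mildly delicate point is the uniform control of the remainder $o(s,v)$ needed to feed into Proposition \ref{pro32} (and through it into Lemma \ref{lem21}); this is immediate here because $\rho_{\Omega_1},\rho_{\Omega_2}$ are continuous on the compact set $S^{n-1}$ and $\rho_{\Omega_1}$ is bounded away from zero, so no genuine obstacle arises and the corollary follows at once.
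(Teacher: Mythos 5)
Your proposal is correct and follows essentially the same route as the paper: expand $\log\rho_s$ for the radial $p$-combination to identify the variation $g=\frac{1}{p}\rho_{\Omega_2}^p/\rho_{\Omega_1}^p$, then apply Proposition \ref{pro32}. Your added remark on the uniform control of the $o(s,v)$ remainder is a small refinement the paper leaves implicit, but the argument is the same.
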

\begin{proof}
From
\begin{align}\label{eq302}
\rho_{\Omega_1\widetilde{+}_ps\cdot\Omega_2}=(\rho^p_{\Omega_1}+s\rho^p_{\Omega_2})^{\frac{1}{p}},
\end{align}
we obtain
\begin{align}\label{eq303}
\nonumber\log(\rho_{\Omega_1\widetilde{+}_ps\cdot\Omega_2})=&\frac{1}{p}[\log(\rho^p_{\Omega_1}+s\rho^p_{\Omega_2})]\\
\nonumber=&\frac{1}{p}\bigg[\log \bigg(\rho^p_{\Omega_1}\bigg(1+\frac{s\rho^p_{\Omega_2}}{\rho^p_{\Omega_1}}\bigg)\bigg)\bigg]\\
\nonumber=&\log\rho_{\Omega_1}+\frac{1}{p}\log\bigg(1+\frac{s\rho^p_{\Omega_2}}{\rho^p_{\Omega_1}}\bigg)\\
=&\log\rho_{\Omega_1}+s\frac{\rho^p_{\Omega_2}}{p\rho^p_{\Omega_1}}+o(s,\cdot).
\end{align}
Since $\Omega_1, \Omega_2\in\mathcal{K}_o^n$, the logarithmic family of convex hulls $\langle\rho_{\Omega_1\widetilde{+}_ps\cdot\Omega_2}\rangle=\log\rho_{\Omega_1}+s\frac{\rho^p_{\Omega_2}}{p\rho^p_{\Omega_1}}+o(s,\cdot)$. Let $\langle\rho_0\rangle=\Omega_1$ and $g=\frac{\rho^p_{\Omega_2}}{p\rho^p_{\Omega_1}}$, thus, the desired result now follows directly from Proposition \ref{pro32} and formula (\ref{eq303}).
\end{proof}

Using the above variational formula for $q$-torsional rigidity, we can define the dual mixed $q$-torsional rigidity as follows: Let $q>1$, $p\in\mathbb{R}$ and convex bodies $\Omega_1, \Omega_2\in\mathcal{K}_o^n$, the dual mixed $q$-torsional rigidity $\widetilde{Q}_{q,p}(\Omega_1,\Omega_2)$ is defined by
\begin{align}\label{eq304}
\widetilde{Q}_{q,p}(\Omega_1,\Omega_2)=\frac{b}{a}\int_{S^{n-1}}\rho_{\Omega_2}(v)^p\rho_{\Omega_1}(v)^{n-p}|\nabla u(r_{\Omega_1}(v))|^qdv.
\end{align}

When $\Omega_1=\Omega_2$, the dual $q$-torsional rigidity of $\Omega_1$ will be shown to be the special case as follows:
\begin{align*}
\widetilde{Q}_{q}(\Omega_1)=\widetilde{Q}_{q,p}(\Omega_1,\Omega_1)=\frac{b}{a}\int_{S^{n-1}}\rho_{\Omega_1}(v)^n|\nabla u(r_{\Omega_1}(v))|^qdv=\widetilde{T}_q(\Omega_1).
\end{align*}

Let $\Omega_2=\mathcal{B}$ ($\mathcal{B}$ is unit ball with $\rho_{\mathcal{B}}(v)=1$) in (\ref{eq304}), the $p$-th dual $q$-torsional rigidity of $\Omega_1$ is defined by
\begin{align}\label{eq305}
\widetilde{Q}_{q,n-p}(\Omega_1)=\frac{b}{a}\int_{S^{n-1}}\rho_{\Omega_1}(v)^p|\nabla u(r_{\Omega_1}(v))|^qdv.
\end{align}

The definition of $p$-th dual $q$-torsional measure has be needed in the introduction. For convenience, the definition of $p$-th dual $q$-torsional measure will be restated as follows.
\begin{definition}\label{def34}
Let $p\in\mathbb{R}$, $q>1$ and $\Omega\in\mathcal{K}_o^n$, we define the $p$-th dual $q$-torsional measure by
\begin{align*}
\widetilde{Q}_{q,n-p}(\Omega,\eta)=&\frac{b}{a}\int_{\alpha_\Omega^*(\eta)}\rho^p_{\Omega}(v)|\nabla u(r_\Omega(v))|^qdv\\
=&\frac{b}{a}\int_{S^{n-1}}\bm1_{\alpha_\Omega^*(\eta)}\rho^p_{\Omega}(v)|\nabla u(r_\Omega(v))|^qdv,
\end{align*}
for each Borel set $\eta\subset S^{n-1}$ and $r_\Omega(v)=\rho_\Omega(v)v$.
\end{definition}

Note that, if $p=n$, then
\begin{align*}
\widetilde{Q}_{q}(\Omega,\cdot)=\frac{b}{a}h_{\Omega}\mu_q^{tor}(\Omega,\cdot),
\end{align*}
where $\mu_q^{tor}(\Omega,\cdot)$ denotes the $q$-torsional measure of a convex body $\Omega$, see (\ref{eq106}).
\subsection{\bf The $p$-th dual $q$-torsional measures for special classes of convex bodies}
\begin{lemma}\label{lem35}
Let $\Omega\in\mathcal{K}_o^n$, $q>1$ and $p\in\mathbb{R}$. For each function $g:S^{n-1}\rightarrow\mathbb{R}$, $\eta\subset S^{n-1}$, then
\begin{align}\label{eq306}
\int_{S^{n-1}}g(\xi)d\widetilde{Q}_{q,n-p}(\Omega,\xi)=\frac{b}{a}\int_{S^{n-1}}g(\alpha_\Omega(v))\rho_{\Omega}(v)^p|\nabla u(r_\Omega(v))|^qdv,
\end{align}
\begin{align}\label{eq307}
\int_{S^{n-1}}g(\xi)d\widetilde{Q}_{q,n-p}(\Omega,\xi)=\frac{b}{a}\int_{\partial^\prime\Omega}g(\nu_\Omega(y))y\cdot\nu_\Omega(y)|y|^{p-n}|\nabla u(y)|^qd\mathcal{H}^{n-1}(y),
\end{align}
and
\begin{align}\label{eq308}
\widetilde{Q}_{q,n-p}(\Omega,\eta)=\frac{b}{a}\int_{y\in\nu_\Omega^{-1}(\eta)}y\cdot\nu_\Omega(y)|y|^{p-n}|\nabla u(y)|^qd\mathcal{H}^{n-1}(y).
\end{align}
\end{lemma}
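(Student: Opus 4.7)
The plan is to derive all three identities from Definition \ref{def34} via two elementary manipulations: first, re-expressing the indicator $\mathbf{1}_{\bm\alpha_\Omega^*(\eta)}$ through the duality between $\bm\alpha_\Omega^*$ and $\bm\alpha_\Omega$ recorded in Section \ref{sec2}; second, performing the change of variables between $S^{n-1}$ and $\partial\Omega$ given by the radial map $r_\Omega$. Identity (\ref{eq306}) will come from the first step applied to indicators and then upgraded by linearity, (\ref{eq307}) from applying the radial change of variables to (\ref{eq306}), and (\ref{eq308}) will be the specialisation of (\ref{eq307}) to $g=\mathbf{1}_\eta$.

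For (\ref{eq306}) I would first treat $g=\mathbf{1}_\eta$. Definition \ref{def34} gives
\begin{equation*}
\widetilde{Q}_{q,n-p}(\Omega,\eta)=\frac{b}{a}\int_{S^{n-1}}\mathbf{1}_{\bm\alpha_\Omega^*(\eta)}(v)\,\rho_\Omega(v)^{p}|\nabla u(r_\Omega(v))|^{q}\,dv.
\end{equation*}
The key set-theoretic equivalence recalled in Section \ref{sec2} is $v\in\bm\alpha_\Omega^*(\eta)\Longleftrightarrow\bm\alpha_\Omega(v)\cap\eta\neq\emptyset$, and off the spherical Lebesgue-null set $\omega_\Omega$ the multivalued $\bm\alpha_\Omega$ reduces to the single-valued $\alpha_\Omega$. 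Hence $\mathbf{1}_{\bm\alpha_\Omega^*(\eta)}(v)=\mathbf{1}_\eta(\alpha_\Omega(v))$ for almost every $v$, which yields (\ref{eq306}) for indicators. Linearity extends it to simple functions, and a standard density plus monotone-convergence argument lifts it to arbitrary bounded measurable $g$.

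For (\ref{eq307}) I would apply the radial change-of-variables formula from Section \ref{sec2},
\begin{equation*}
\int_{\partial\Omega}\Psi(y)\,d\mathcal{H}^{n-1}(y)=\int_{S^{n-1}}\Psi(\rho_\Omega(v)v)\,J(v)\,dv,\qquad J(v)=\frac{\rho_\Omega(v)^{n}}{h_\Omega(\alpha_\Omega(v))},
\end{equation*}
in its inverted form to the right-hand side of (\ref{eq306}). Setting $y=r_\Omega(v)$ one has $v=y/|y|$, $\rho_\Omega(v)=|y|$, $\alpha_\Omega(v)=\nu_\Omega(y)$ on $\partial'\Omega$, and $h_\Omega(\nu_\Omega(y))=y\cdot\nu_\Omega(y)$, so
\begin{equation*}
\frac{1}{J(v)}=\frac{y\cdot\nu_\Omega(y)}{|y|^{n}}.
\end{equation*}
Substituting converts $g(\alpha_\Omega(v))\rho_\Omega(v)^{p}|\nabla u(r_\Omega(v))|^{q}\,dv$ into $g(\nu_\Omega(y))(y\cdot\nu_\Omega(y))|y|^{p-n}|\nabla u(y)|^{q}\,d\mathcal{H}^{n-1}(y)$, which is exactly (\ref{eq307}). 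Formula (\ref{eq308}) then follows by taking $g=\mathbf{1}_\eta$ in (\ref{eq307}) together with $\mathbf{1}_\eta(\nu_\Omega(y))=\mathbf{1}_{\nu_\Omega^{-1}(\eta)}(y)$.

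The only delicate point I anticipate is the handling of the exceptional sets $\omega_\Omega\subset S^{n-1}$ and $\sigma\Omega\subset\partial\Omega$, where $\bm\alpha_\Omega$ and $\nu_\Omega$ respectively fail to be single-valued; both carry vanishing spherical Lebesgue, respectively $\mathcal{H}^{n-1}$, measure, so all pointwise identifications above hold almost everywhere and none of the integrals are affected. No machinery beyond Definition \ref{def34} and the measure-theoretic tool-kit already catalogued in Section \ref{sec2} enters the argument, so the lemma is essentially a book-keeping consequence of the duality $\bm\alpha_\Omega^*\leftrightarrow\bm\alpha_\Omega$ combined with the radial Jacobian $J$.
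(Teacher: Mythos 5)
Your proposal is correct and follows essentially the same route as the paper: establish (\ref{eq306}) for indicators via the identity $\bm 1_{\bm\alpha_\Omega^*(\eta)}(v)=\bm 1_\eta(\alpha_\Omega(v))$ a.e., extend by linearity and approximation, then obtain (\ref{eq307}) by the radial change of variables with Jacobian $J(v)=\rho_\Omega(v)^n/h_\Omega(\alpha_\Omega(v))$, and specialise to $g=\bm 1_\eta$ for (\ref{eq308}). The paper cites the corresponding identities from Huang--Lutwak--Yang--Zhang where you compute the Jacobian explicitly, but the argument is the same.
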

\begin{proof}
Firstly, let's provide a proof for (\ref{eq306}), this proof method refers to \cite [Lemma 3.3]{HY}. Assuming $\psi$ is a simple function on $S^{n-1}$ given by
\begin{align*}
\psi=\sum_{i=1}^mc_i\bm1_{\eta_i}
\end{align*}
with $c_i\in\mathbb{R}$ and Borel set $\eta_i\subset S^{n-1}$. By using Definition \ref{def34} and \cite[Equation (2.21)]{HY}, we get
\begin{align*}
\int_{S^{n-1}}\psi(\xi)d\widetilde{Q}_{q,n-p}(\Omega,\xi)=&\int_{S^{n-1}}\sum_{i=1}^mc_i\bm1_{\eta_i}(\xi)d\widetilde{Q}_{q,n-p}(\Omega,\xi)\\
=&\sum_{i=1}^mc_i\widetilde{Q}_{q,n-p}(\Omega,\eta_i)\\
=&\frac{b}{a}\int_{S^{n-1}}\sum_{i=1}^mc_i\bm1_{\bm\alpha^*_\Omega(\eta_i)}(v)\rho_{\Omega}(v)^p|\nabla u(r_\Omega(v))|^qdv\\
=&\frac{b}{a}\int_{S^{n-1}}\sum_{i=1}^mc_i\bm1_{\eta_i}(\alpha_\Omega(v))\rho_{\Omega}(v)^p|\nabla u(r_\Omega(v))|^qdv\\
=&\frac{b}{a}\int_{S^{n-1}}\sum_{i=1}^m\psi(\alpha_\Omega(v))\rho_{\Omega}(v)^p|\nabla u(r_\Omega(v))|^qdv.
\end{align*}

Note that we have established (\ref{eq306}) for simple functions, for a bounded Borel $g$, we choose a sequence of simple functions $\psi_k$ that converge to $g$, uniformly. Then $\psi_k\circ \alpha_\Omega$ to $g\circ \alpha_\Omega$ a.e. with respect to the spherical Lebesgue measure. Since $g$ is a Borel function on $S^{n-1}$ and the radial Gauss map $\alpha_\Omega$ is continuous on $S^{n-1}\setminus\eta_\Omega$, the composite function $g\circ \alpha_\Omega$ is a Borel function on $S^{n-1}\setminus\eta_\Omega$. Hence, $g$ and $g\circ \alpha_\Omega$ are Lebesgue integrable on $S^{n-1}$ because $g$
is bounded and $\eta_\Omega$ has the Lebesgue measure zero. Taking the limit $k\rightarrow\infty$ establishes (\ref{eq306}).

Next, we give a proof of (\ref{eq307}). Let $f=g\circ\alpha_\Omega$, then, as shown in the proof of (\ref{eq306}), $f$ is bounded and Lebesgue integrable on $S^{n-1}$. Thus, the desired (\ref{eq307}) follows immediately from (\ref{eq306}) and \cite[Equation (2.31)]{HY}. In fact, let $y\in\partial\Omega$ and $v=\overline{y}=\frac{y}{|y|}$,
\begin{align*}
\int_{S^{n-1}}g(\xi)d\widetilde{Q}_{q,n-p}(\Omega,\xi)=&\frac{b}{a}\int_{S^{n-1}}g(\alpha_\Omega(v))\rho_{\Omega}(v)^p|\nabla u(r_\Omega(v))|^qdv\\
=&\frac{b}{a}\int_{\partial^\prime\Omega}g(\alpha_\Omega(\overline{y}))|y|^{p-n}(y\cdot\nu_\Omega(y))|\nabla u(y)|^qd\mathcal{H}^{n-1}(y)\\
=&\frac{b}{a}\int_{\partial^\prime\Omega}g(\nu_\Omega(y))|y|^{p-n}(y\cdot\nu_\Omega(y))|\nabla u(y)|^qd\mathcal{H}^{n-1}(y).
\end{align*}
This gives (\ref{eq307}).

Finally, the proof of (\ref{eq308}) is established. Let $g=\bm1_\eta$ in (\ref{eq307}) and the fact that $\nu_\Omega(y)\in\eta\Leftrightarrow y\in\nu_\Omega^{-1}(\eta)$, for almost all $y$ with respect to the spherical Lebesgue measure. Consequently, we directly attain (\ref{eq308}).
\end{proof}

We conclude with three observations regarding the $p$-th dual $q$-torsional measures.

(i) Let $P\in\mathcal{K}_o^n$ be a polytope with outer unit normals $v_1,\cdots,v_m$, $\triangle_i$ be the cone that consists of all of the rays emanating from the origin and passing through the facet of $P$ whose outer unit normal is $v_i$. Then, recalling that we abbreviate $\bm\alpha_P^*(\{v_i\})$ by $\bm\alpha_P^*(v_i)$, we have
\begin{align}\label{eq309}
\bm\alpha_P^*(v_i)=S^{n-1}\cap \triangle_i.
\end{align}
If $\eta\subset S^{n-1}$ is a Borel set such that $\{v_1,\cdots,v_m\}\cap\eta=\emptyset$, then $\bm\alpha_P^*(\eta)$ has the spherical Lebesgue measure zero. Thus, the $p$-th dual $q$-torsional measure $\widetilde{Q}_{q,n-p}(P,\cdot)$ is discrete and concentrated on $\{v_1,\cdots,v_m\}$. By Definition \ref{def34} and equality (\ref{eq309}), we have
\begin{align*}
\widetilde{Q}_{q,n-p}(P,\cdot)=\sum_{i=1}^mc_i\delta_{v_i},
\end{align*}
where $\delta_{v_i}$ defines the delta measure concentrated at the point $v_i$ on $S^{n-1}$, and
\begin{align*}
c_i=\frac{b}{a}\int_{S^{n-1}\cap \triangle_i}\rho_P(v)^p|\nabla u(r_P(v))|^qdv.
\end{align*}

(ii) Assume that $\Omega\in\mathcal{K}_o^n$ is strictly convex. If $g:S^{n-1}\rightarrow\mathbb{R}$ is continuous, then (\ref{eq307}) and \cite[Equation (2.33)]{HY} yield
\begin{align*}
\int_{S^{n-1}}g(v)d\widetilde{Q}_{q,n-p}(\Omega,v)=&\frac{b}{a}\int_{\partial\Omega}y\cdot\nu_\Omega(y)g(\nu_\Omega(y))|y|^{p-n}|\nabla u(y)|^qd\mathcal{H}^{n-1}(y)\\
=&\frac{b}{a}\int_{S^{n-1}}g(v)|\overline{\nabla} h_{\Omega}(v)|^{p-n}h_{\Omega}(v)|\nabla u(\nu^{-1}_\Omega(v))|^qdS(\Omega,v).
\end{align*}
This shows that
\begin{align}\label{eq310}
d\widetilde{Q}_{q,n-p}(\Omega,\cdot)=\frac{b}{a}|\overline{\nabla} h_{\Omega}|^{p-n}h_{\Omega}|\nabla u|^qdS(\Omega,\cdot).
\end{align}

(iii) Assume that $\Omega\in\mathcal{K}^n_o$ has a $C^2$ boundary with everywhere positive curvature. Since in this case $S(\Omega,\cdot)$ is absolutely continuous with respect to the spherical Lebesgue measure, it follows that $\widetilde{Q}_{q,n-p}(\Omega,\cdot)$ is absolutely continuous with respect to the spherical Lebesgue measure, and from (\ref{eq310}) and (\ref{eq201}), we have
\begin{align}\label{eq311}
\frac{d\widetilde{Q}_{q,n-p}(\Omega,v)}{dv}=\frac{b}{a}|\overline{\nabla} h_{\Omega}(v)|^{p-n}h_{\Omega}(v)|\nabla u(\nu^{-1}_\Omega(v))|^q\det(h_{ij}(v)+h_\Omega(v)\delta_{ij}),
\end{align}
where $h_{ij}$ denotes the Hessian matrix of $h_\Omega$ with respect to an orthonormal frame on $S^{n-1}$, $\overline{\nabla}$ is the gradient in $\mathbb{R}^n$ with respect to the Euclidean metric and $\nabla$ is the gradient operator on $S^{n-1}$ with respect to the induced metric. Then for a function $h:\mathbb{R}^n\rightarrow\mathbb{R}$ which is differentiable at $v\in\mathbb{R}^n$, with $|v|=1$, we have
\begin{align*}
\overline{\nabla} h(v)=\nabla h(v)+h(v)v.
\end{align*}

\subsection{\bf Properties of the $p$-th dual $q$-torsional measure} In this subsection, we get some properties of the $p$-th dual $q$-torsional measure.
\begin{lemma}\label{lem36}
Let $\Omega\in\mathcal{K}_o^n$ and $p\in\mathbb{R}$, then the $p$-th dual $q$-torsional measure $\widetilde{Q}_{q,n-p}(\Omega,\cdot)$ is a Borel measure on $S^{n-1}$.
\end{lemma}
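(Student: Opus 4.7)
The plan is to verify the three defining properties of a Borel measure: vanishing on the empty set, countable additivity on disjoint Borel sets, and measurability/finiteness of the defining integral. The key technical point is that $\bm\alpha_\Omega^*$ is only monotone (not a set-function homomorphism), so disjointness of the $\eta_i$ need not yield disjointness of $\bm\alpha_\Omega^*(\eta_i)$; the fix is that the overlap is always contained in an exceptional set of spherical Lebesgue measure zero.

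First I would dispose of the easy items. Since $\bm\alpha_\Omega^*(\emptyset)=\emptyset$, the integral defining $\widetilde{Q}_{q,n-p}(\Omega,\emptyset)$ is zero. Non-negativity is immediate from the non-negativity of the integrand $\rho_\Omega^p|\nabla u(r_\Omega(\cdot))|^q$. For finiteness (and Lebesgue integrability of the integrand), I would change variables $y=r_\Omega(v)$ using the Jacobian factor $J(v)=\rho_\Omega(v)^n/h_\Omega(\alpha_\Omega(v))$ to obtain
\[
\widetilde{Q}_{q,n-p}(\Omega,S^{n-1})=\tfrac{b}{a}\int_{\partial'\Omega}|y|^{p-n}\bigl(y\cdot\nu_\Omega(y)\bigr)|\nabla u(y)|^q\,d\mathcal{H}^{n-1}(y),
\]
and use that $\Omega\in\mathcal{K}^n_o$ bounds $|y|$ and $y\cdot\nu_\Omega(y)$ from above and below by positive constants, combined with $|\nabla u|\in L^q(\partial\Omega,\mathcal{H}^{n-1})$ (noted in Section~2). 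The measurability of $\bm\alpha_\Omega^*(\eta)$ for Borel $\eta$ is standard: outside the exceptional set $\omega_\Omega$ (of spherical Lebesgue measure zero) the map $\alpha_\Omega$ is continuous, so $\bm\alpha_\Omega^*(\eta)\setminus\omega_\Omega=\alpha_\Omega^{-1}(\eta)\setminus\omega_\Omega$ is Borel, and adjoining a Lebesgue-null set keeps it Lebesgue measurable.

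The main step is countable additivity. Let $\{\eta_i\}_{i\ge 1}$ be pairwise disjoint Borel subsets of $S^{n-1}$ with union $\eta$. Monotonicity of $\bm\alpha_\Omega^*$ gives $\bm\alpha_\Omega^*(\eta)=\bigcup_i\bm\alpha_\Omega^*(\eta_i)$. For the key disjointness-modulo-null observation: if $v\in\bm\alpha_\Omega^*(\eta_i)\cap\bm\alpha_\Omega^*(\eta_j)$ with $i\ne j$, then by the characterization $v\in\bm\alpha_\Omega^*(\eta)\iff\bm\alpha_\Omega(v)\cap\eta\ne\emptyset$ (recalled in Section~2.2), we have $\bm\alpha_\Omega(v)\cap\eta_i\ne\emptyset$ and $\bm\alpha_\Omega(v)\cap\eta_j\ne\emptyset$; since $\eta_i\cap\eta_j=\emptyset$, $\bm\alpha_\Omega(v)$ must contain at least two distinct points, forcing $v\in\omega_\Omega$. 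Since $\omega_\Omega$ has spherical Lebesgue measure zero, the sets $\bm\alpha_\Omega^*(\eta_i)$ are pairwise disjoint up to a null set, and the $\sigma$-additivity of the Lebesgue integral on $S^{n-1}$ yields
\[
\int_{\bm\alpha_\Omega^*(\eta)}\rho_\Omega^p|\nabla u(r_\Omega(v))|^q\,dv=\sum_{i=1}^{\infty}\int_{\bm\alpha_\Omega^*(\eta_i)}\rho_\Omega^p|\nabla u(r_\Omega(v))|^q\,dv,
\]
which is precisely $\widetilde{Q}_{q,n-p}(\Omega,\eta)=\sum_i\widetilde{Q}_{q,n-p}(\Omega,\eta_i)$.

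The main obstacle is exactly this overlap issue, and the argument above—reducing it to the measure-zero exceptional set $\omega_\Omega$ where $\bm\alpha_\Omega$ fails to be single-valued—is the crux of the proof. Everything else (Borel/Lebesgue measurability, finiteness, vanishing on $\emptyset$, non-negativity) is routine once the change-of-variables formula and the properties of the radial Gauss map recalled in Section~2.2 are in hand.
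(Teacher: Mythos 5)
Your proof is correct and follows essentially the same route as the paper: both reduce countable additivity to the $\sigma$-additivity of the Lebesgue integral after observing that the overlaps $\bm\alpha_\Omega^*(\eta_i)\cap\bm\alpha_\Omega^*(\eta_j)$ ($i\neq j$) lie in the spherical-Lebesgue-null set $\omega_\Omega$ where $\bm\alpha_\Omega$ is multivalued. You spell out this disjointness-modulo-null step (and the finiteness/measurability checks) more explicitly than the paper, which delegates them to the cited lemmas of Huang--Lutwak--Yang--Zhang, but the underlying argument is the same.
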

\begin{proof}
It is clear that $\widetilde{Q}_{q,n-p}(\Omega,\emptyset)=0$. We only need to prove the countable additivity. Namely, given a sequence of disjoint sets $\eta_i\subset S^{n-1}$, $i=1,2,\cdots$, with $\eta_i\cap \eta_j=\emptyset$ for $i\neq j$, the following formula holds:
\begin{align*}
\widetilde{Q}_{q,n-p}(\Omega,\cup_{i=1}^\infty\eta_i)=\sum_{i=1}^\infty\widetilde{Q}_{q,n-p}(\Omega,\eta_i).
\end{align*}
To this end, it follows from Definition \ref{def34} that for each Borel set $\eta_i\subset S^{n-1}$, one has
\begin{align*}
\widetilde{Q}_{q,n-p}(\Omega,\eta_i)=\frac{b}{a}\int_{\alpha_\Omega^*(\eta_i)}\rho^p_{\Omega}(v)|\nabla u(r_\Omega(v))|^qdv.
\end{align*}
By \cite[Lemmas 2.1-2.4]{HY}, the additivity for Lebesgue integral and fact that the spherical measure of $\omega_\Omega$ is zero, one has
\begin{align*}
\widetilde{Q}_{q,n-p}(\Omega,\cup_{i=1}^\infty\eta_i)=&\frac{b}{a}\int_{\alpha_\Omega^*(\cup_{i=1}^\infty\eta_i)}\rho^p_{\Omega}(v)|\nabla u(r_\Omega(v))|^qdv\\
=&\frac{b}{a}\int_{\cup_{i=1}^\infty\alpha_\Omega^*(\eta_i)}\rho^p_{\Omega}(v)|\nabla u(r_\Omega(v))|^qdv\\
=&\frac{b}{a}\int_{\cup_{i=1}^\infty\alpha_\Omega^*(\eta_i\setminus \omega_\Omega)}\rho^p_{\Omega}(v)|\nabla u(r_\Omega(v))|^qdv\\
=&\frac{b}{a}\sum_{i=1}^\infty\int_{\alpha_\Omega^*(\eta_i\setminus \omega_\Omega)}\rho^p_{\Omega}(v)|\nabla u(r_\Omega(v))|^qdv\\
=&\frac{b}{a}\sum_{i=1}^\infty\int_{\alpha_\Omega^*(\eta_i)}\rho^p_{\Omega}(v)|\nabla u(r_\Omega(v))|^qdv-\frac{b}{a}\sum_{i=1}^\infty\int_{\alpha_\Omega^*(\omega_\Omega)}\rho^p_{\Omega}(v)|\nabla u(r_\Omega(v))|^qdv\\
=&\frac{b}{a}\sum_{i=1}^\infty\int_{\alpha_\Omega^*(\eta_i)}\rho^p_{\Omega}(v)|\nabla u(r_\Omega(v))|^qdv\\
=&\sum_{i=1}^\infty\widetilde{Q}_{q,n-p}(\Omega,\eta_i).
\end{align*}
The countable additivity holds and hence $\widetilde{Q}_{q,n-p}(\Omega,\cdot)$ is a Borel measure.
\end{proof}
\begin{lemma}\label{lem37}
Let $\Omega\in\mathcal{K}_o^n$ and $p\in\mathbb{R}$, then the $p$-th dual $q$-torsional measure $\widetilde{Q}_{q,n-p}(\Omega,\cdot)$ is absolutely continuous with respect to the surface area measure $S(\Omega,\cdot)$.
\end{lemma}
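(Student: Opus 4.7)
The goal is to show that $S(\Omega,\eta)=0$ forces $\widetilde{Q}_{q,n-p}(\Omega,\eta)=0$ for every Borel $\eta\subset S^{n-1}$. The strategy is to replace the defining integral of $\widetilde{Q}_{q,n-p}(\Omega,\cdot)$ (which lives on $S^{n-1}$ via the reverse radial Gauss image $\alpha^*_\Omega$) by an equivalent integral over $\partial\Omega$ expressed through the inverse Gauss map, so that the hypothesis $\mathcal{H}^{n-1}(\nu_\Omega^{-1}(\eta))=S(\Omega,\eta)=0$ can be applied directly.

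\textbf{Key steps.} First, I would invoke identity (\ref{eq308}) from Lemma \ref{lem35}, which gives
\begin{equation*}
\widetilde{Q}_{q,n-p}(\Omega,\eta)=\frac{b}{a}\int_{\nu_\Omega^{-1}(\eta)} (y\cdot\nu_\Omega(y))\,|y|^{p-n}\,|\nabla u(y)|^q\,d\mathcal{H}^{n-1}(y).
\end{equation*}
Next, I would verify that the integrand
\begin{equation*}
F(y):=(y\cdot\nu_\Omega(y))\,|y|^{p-n}\,|\nabla u(y)|^q
\end{equation*}
is in $L^1(\partial\Omega,\mathcal{H}^{n-1})$. Since $\Omega\in\mathcal{K}_o^n$, there exist $0<r_1\leq r_2$ with $r_1\leq|y|\leq r_2$ on $\partial\Omega$, so $|y|^{p-n}$ is bounded for every $p\in\mathbb{R}$. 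Moreover, $y\cdot\nu_\Omega(y)=h_\Omega(\nu_\Omega(y))\leq r_2$ is also bounded. Finally, $|\nabla u|\in L^q(\partial\Omega,\mathcal{H}^{n-1})$, as recalled in Section \ref{sec2}. Together these bounds yield $F\in L^1(\partial\Omega,\mathcal{H}^{n-1})$.

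\textbf{Conclusion.} If $S(\Omega,\eta)=\mathcal{H}^{n-1}(\nu_\Omega^{-1}(\eta))=0$, then integrating the integrable function $F$ over the null set $\nu_\Omega^{-1}(\eta)$ gives zero, so $\widetilde{Q}_{q,n-p}(\Omega,\eta)=0$. This establishes absolute continuity. The only subtlety lies on the singular set $\sigma\Omega\subset\partial\Omega$, where $\nu_\Omega$ is multi-valued; however, $\mathcal{H}^{n-1}(\sigma\Omega)=0$, so it may be discarded without affecting either the integral or the hypothesis. I do not foresee a genuine obstacle: the content of the lemma is essentially that the change-of-variables formula (\ref{eq308}) already writes $\widetilde{Q}_{q,n-p}(\Omega,\cdot)$ as a bounded density times $S(\Omega,\cdot)$-measurable integration, and the only minor care needed is confirming boundedness of $|y|^{p-n}$ on $\partial\Omega$ (which uses $\Omega\in\mathcal{K}_o^n$) and measurability of the integrand.
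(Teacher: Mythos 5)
Your proposal is correct and follows essentially the same route as the paper: both invoke identity (\ref{eq308}) to rewrite $\widetilde{Q}_{q,n-p}(\Omega,\eta)$ as an integral over $\nu_\Omega^{-1}(\eta)\subset\partial\Omega$, and conclude it vanishes when $S(\Omega,\eta)=\mathcal{H}^{n-1}(\nu_\Omega^{-1}(\eta))=0$. Your additional check that the integrand is in $L^1(\partial\Omega,\mathcal{H}^{n-1})$ is a harmless refinement the paper leaves implicit.
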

\begin{proof}
Let $\eta\subset S^{n-1}$ be such that $S(\Omega,\eta)=0$, or equivalently, $\mathcal{H}^{n-1}(\nu_\Omega^{-1}(\eta))=0$. In this case, using (\ref{eq308}), we conclude that
\begin{align*}
\widetilde{Q}_{q,n-p}(\Omega,\eta)=\frac{b}{a}\int_{y\in\nu_\Omega^{-1}(\eta)}y\cdot\nu_\Omega(y)|y|^{p-n}|\nabla u(y)|^qd\mathcal{H}^{n-1}(y)=0,
\end{align*}
since we are integrating over a set of measure zero.
\end{proof}

\begin{lemma}\label{lem38}
Let $p\in\mathbb{R}$. If $\Omega_i\in\mathcal{K}_o^n$ with $\Omega_i\rightarrow\Omega_0\in\mathcal{K}_o^n$, then $\widetilde{Q}_{q,n-p}(\Omega_i,\cdot)\rightarrow\widetilde{Q}_{q,n-p}(\Omega_0,\cdot)$, weakly.
\end{lemma}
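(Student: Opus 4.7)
Weak convergence of Borel measures on $S^{n-1}$ is equivalent to $\int g\,d\widetilde{Q}_{q,n-p}(\Omega_i,\cdot)\to \int g\,d\widetilde{Q}_{q,n-p}(\Omega_0,\cdot)$ for every $g\in C(S^{n-1})$. My starting point is the representation (\ref{eq306}) from Lemma \ref{lem35}, which recasts these integrals on the \emph{fixed} sphere against spherical Lebesgue measure:
\[
\int_{S^{n-1}} g(\xi)\,d\widetilde{Q}_{q,n-p}(\Omega_i,\xi) = \frac{b}{a}\int_{S^{n-1}} g(\alpha_{\Omega_i}(v))\,\rho_{\Omega_i}(v)^p\,|\nabla u_i(r_{\Omega_i}(v))|^q\,dv,
\]
where $u_i$ solves the $q$-torsion problem (\ref{eq102}) on $\Omega_i$. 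It then suffices to establish a.e.\ pointwise convergence of the integrand together with a uniform dominating function, and to invoke the dominated convergence theorem.

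\textbf{Convergence of the first two factors.} Since $\Omega_i\to\Omega_0\in\mathcal{K}_o^n$ in the Hausdorff metric, the radial functions converge uniformly, $\rho_{\Omega_i}\to\rho_{\Omega_0}$ on $S^{n-1}$, and there exist constants $0<r_0<R_0$ with $r_0\le\rho_{\Omega_i}\le R_0$ for all large $i$. The radial Gauss maps satisfy $\alpha_{\Omega_i}(v)\to\alpha_{\Omega_0}(v)$ at every $v\in S^{n-1}\setminus\omega_{\Omega_0}$, a standard consequence of Hausdorff convergence combined with the fact that $\bm\alpha_{\Omega_0}(v)$ is a singleton off the spherical null set $\omega_{\Omega_0}$. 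Continuity of $g$ then gives $g(\alpha_{\Omega_i}(v))\to g(\alpha_{\Omega_0}(v))$ a.e., and the uniform bounds on $\rho_{\Omega_i}$ yield $\rho_{\Omega_i}^p\to\rho_{\Omega_0}^p$ uniformly on $S^{n-1}$, both bounded between $\min(r_0^p,R_0^p)$ and $\max(r_0^p,R_0^p)$.

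\textbf{The gradient factor (main obstacle).} The delicate step is to show $|\nabla u_i(r_{\Omega_i}(v))|^q\to |\nabla u_0(r_{\Omega_0}(v))|^q$ for a.e.\ $v$, with a uniform $L^\infty$ bound. I would transplant each $u_i$ to the fixed limit domain $\Omega_0$ via the radial diffeomorphism $y\mapsto (\rho_{\Omega_0}(\bar y)/\rho_{\Omega_i}(\bar y))\,y$; since $\rho_{\Omega_i}\to\rho_{\Omega_0}$ uniformly and is bounded away from zero, this produces a family of quasilinear PDEs on $\Omega_0$ whose coefficients converge uniformly to those of $\Delta_q\,u_0=-1$ on $\Omega_0$. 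Because $\Omega_i$ sits between two fixed balls $r_0\mathcal{B}\subset\Omega_i\subset R_0\mathcal{B}$, boundary $C^{1,\alpha}$ regularity for the $q$-Laplacian on convex domains provides a uniform estimate $\|\nabla u_i\|_{L^\infty(\partial\Omega_i)}\le C$ together with $C^{1,\alpha}(\overline{\Omega_0})$ convergence of the transplants to $u_0$; pushing back under the radial diffeomorphism yields both the required a.e.\ convergence of $|\nabla u_i(r_{\Omega_i}(v))|^q$ to $|\nabla u_0(r_{\Omega_0}(v))|^q$ on $S^{n-1}$ and a uniform $L^\infty$ bound on this factor. Equivalently, one can short-circuit this PDE analysis by invoking the weak continuity of the $q$-torsional measure, $\mu_q^{tor}(\Omega_i,\cdot)\to \mu_q^{tor}(\Omega_0,\cdot)$, established in \cite{HJ2}, and transferring it to the dual side through the identity (\ref{eq301}) that expresses integrals against $\mu_q^{tor}$ in terms of $|\nabla u|^q$ against spherical Lebesgue measure.

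\textbf{Conclusion.} Combining the a.e.\ pointwise convergence of the three factors with the uniform estimate
\[
\bigl|g(\alpha_{\Omega_i}(v))\,\rho_{\Omega_i}(v)^p\,|\nabla u_i(r_{\Omega_i}(v))|^q\bigr|\le C\,\|g\|_{L^\infty(S^{n-1})}\max(r_0^p,R_0^p)\quad \text{for all large } i,
\]
the dominated convergence theorem yields $\int g\,d\widetilde{Q}_{q,n-p}(\Omega_i,\cdot)\to \int g\,d\widetilde{Q}_{q,n-p}(\Omega_0,\cdot)$ for every $g\in C(S^{n-1})$, which is the desired weak convergence.
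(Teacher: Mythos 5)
Your proposal is correct and follows essentially the same route as the paper: both reduce, via the representation (\ref{eq306}), to an application of the dominated convergence theorem on the fixed sphere, using the uniform bounds and uniform convergence of $\rho_{\Omega_i}$, the a.e.\ convergence of the radial Gauss maps $\alpha_{\Omega_i}$ (Lemma 2.2 of \cite{HY}), and boundedness plus a.e.\ convergence of the gradient factor. In fact you treat the one delicate point --- that $u_i$ depends on $\Omega_i$, so one must justify $|\nabla u_i(r_{\Omega_i}(v))|\to|\nabla u_0(r_{\Omega_0}(v))|$ together with a uniform bound --- more explicitly (via transplantation to a fixed domain and uniform $C^{1,\alpha}$ estimates) than the paper does, which merely asserts continuity and boundedness of $\nabla u$ with a pointer to its a priori estimates.
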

\begin{proof}
Let $g:S^{n-1}\rightarrow\mathbb{R}$ be continuous. From (\ref{eq306}), we know that
\begin{align*}
\int_{S^{n-1}}g(\xi)d\widetilde{Q}_{q,n-p}(\Omega_i,\xi)=\frac{b}{a}\int_{S^{n-1}}g(\alpha_{\Omega_i}(v))\rho_{\Omega_i}(v)^p|\nabla u(r_{\Omega_i}(v))|^qdv,
\end{align*}
for all $i$. The convergence $\Omega_i\rightarrow\Omega_0$ with respect to the Hausdorff metric implies that $\rho(\Omega_i,v)\rightarrow\rho(\Omega_0,v)$ uniformly on $S^{n-1}$. Since $\Omega_i, \Omega_0\in\mathcal{K}_o^n$, there are positive constants $c$ and $C$ such that for all $v\in S^{n-1}$ and all $i=1,2,\cdots$,
\begin{align*}
c\leq \rho(\Omega_i,v),\rho(\Omega_0,v)\leq C.
\end{align*}
For any given continuous function $g:S^{n-1}\rightarrow\mathbb{R}$ that there is a positive constant $M$ such that for any $i=1,2,\cdots$,
\begin{align*}
|g(\alpha_{\Omega_i})\rho^p(\Omega_i,\cdot)|\leq M \quad \text{and} \quad |g(\alpha_{\Omega_0})\rho^p(\Omega_0,\cdot)|\leq M.
\end{align*} 
From $\Omega_i\rightarrow \Omega_0$ and continuity of $r_\Omega$, we know that $r(\Omega_i,v)\rightarrow r(\Omega_0,v)$. The continuity of $\nabla u$ from Lemma \ref{lem53} on $\Omega_i,\Omega_0\in\mathcal{K}_o^n$ implies 
\begin{align*}|\nabla u(r_{\Omega_i}(v))|\leq C_1\quad\text{and}\quad|\nabla u(r_{\Omega_0}(v))|\leq C_1.
\end{align*}
Thus, the desired result directly from the Lemma 2.2 of \cite{HY} and dominated convergence theorem:
\begin{align*}
\frac{b}{a}\int_{S^{n-1}}g(\alpha_{\Omega_i}(v))\rho_{\Omega_i}(v)^p|\nabla u(r_{\Omega_i}(v))|^qdv\rightarrow\frac{b}{a}\int_{S^{n-1}}g(\alpha_{\Omega_0}(v))\rho_{\Omega_0}(v)^p|\nabla u(r_{\Omega_0}(v))|^qdv,
\end{align*}
from this it follows that $\widetilde{Q}_{q,n-p}(\Omega_i,\cdot)\rightarrow\widetilde{Q}_{q,n-p}(\Omega_0,\cdot)$, weakly.
\end{proof}
\subsection{Variational formulas for the $p$-th dual $q$-torsional rigidity}
\begin{theorem}\label{the39}
Let $\eta\subset S^{n-1}$ be a closed set not contained in any closed hemisphere of $S^{n-1}$, $\rho_0:\eta\rightarrow (0,\infty)$ and $g:\eta\rightarrow\mathbb{R}$ be continuous. If $\langle\rho_s\rangle$ is a logarithmic family of the convex hulls of $(\rho_0,g)$, then, for $p\neq 0$,
\begin{align*}
\lim_{s\rightarrow 0}\frac{\widetilde{Q}_{q,n-p}(\langle\rho_s\rangle^*)-\widetilde{Q}_{q,n-p}(\langle\rho_0\rangle^*)}{s}=-p(1+q)\int_\eta g(\xi)d\widetilde{Q}_{q,n-p}(\langle\rho_0\rangle^*,\xi).
\end{align*}
\end{theorem}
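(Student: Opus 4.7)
The approach is to exploit the polar-duality identity $\rho_{\langle\rho_s\rangle^*}(v)=1/h_{\langle\rho_s\rangle}(v)$, which converts the $\rho^p$-factor in the definition of $\widetilde{Q}_{q,n-p}$ into $h_{\langle\rho_s\rangle}^{-p}$, precisely the quantity whose variation is controlled by Lemma~\ref{lem21}. Writing $\Omega_s^*:=\langle\rho_s\rangle^*$ and denoting by $u_s$ the $q$-torsion function on $\Omega_s^*$, I would begin from the rewrite
\begin{align*}
\widetilde{Q}_{q,n-p}(\Omega_s^*)=\frac{b}{a}\int_{S^{n-1}} h_{\langle\rho_s\rangle}(v)^{-p}\,|\nabla u_s(r_{\Omega_s^*}(v))|^q\,dv,
\end{align*}
and plan to differentiate in $s$ under the integral via dominated convergence, using the uniform Lipschitz bound $|h_{\langle\rho_s\rangle}^{-p}(v)-h_{\langle\rho_0\rangle}^{-p}(v)|\le M|s|$ from Lemma~\ref{lem21} together with the continuous dependence of $|\nabla u_s|^q$ on the smooth strictly convex body $\Omega_s^*$ as $s\to 0$.

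The integrand splits naturally into two $s$-dependent factors. For the first factor, Lemma~\ref{lem21} gives directly
\begin{align*}
\frac{d}{ds}\bigg|_{s=0}h_{\langle\rho_s\rangle}(v)^{-p}=-p\,h_{\langle\rho_0\rangle}(v)^{-p}\,g(\alpha^*_{\langle\rho_0\rangle}(v)),
\end{align*}
which is responsible for the $-p$ in the final coefficient. For the second factor $|\nabla u_s(r_{\Omega_s^*}(v))|^q$ my plan is to observe that the polar family is itself a logarithmic family of convex hulls: $\Omega_s^*=\langle\tilde\rho_s\rangle$ with $\tilde\rho_s:=1/h_{\langle\rho_s\rangle}$ satisfying $\log\tilde\rho_s=\log\tilde\rho_0+s\,\tilde g+o(s,\cdot)$ and $\tilde g=-g\circ\alpha^*_{\langle\rho_0\rangle}$, where this expansion follows from Lemma~\ref{lem21} with exponent $1$. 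Proposition~\ref{pro32} applied to this $\tilde\rho_s$-family computes the variation of $\widetilde{T}_q(\Omega_s^*)=\widetilde{Q}_{q,0}(\Omega_s^*)$, and combining this with a first-order expansion of the moving evaluation point $r_{\Omega_s^*}(v)=v/h_{\langle\rho_s\rangle}(v)$ and the $q$-Laplace equation yields the shape-derivative of $|\nabla u_s|^q$ on the boundary. The exponent $q$ in $|\cdot|^q$ then produces the factor $q$, so this contribution equals $-pq\,h_{\langle\rho_0\rangle}^{-p}(v)\,g(\alpha^*_{\langle\rho_0\rangle}(v))\,|\nabla u_{\Omega_0^*}(r_{\Omega_0^*}(v))|^q$.

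Adding the two contributions gives the total coefficient $-p(1+q)$. Finally, the polar-duality identity $\alpha_{\Omega_0^*}=\alpha^*_{\langle\rho_0\rangle}$ (valid a.e.\ on $S^{n-1}$) together with formula \eqref{eq306} applied to $\Omega_0^*$ rewrites the resulting integral as $-p(1+q)\int_\eta g(\xi)\,d\widetilde{Q}_{q,n-p}(\langle\rho_0\rangle^*,\xi)$, which is the desired identity. The main obstacle is the shape-derivative computation for $|\nabla u_s|^q$ at the moving point $r_{\Omega_s^*}(v)$: since $u_s$ depends nonlocally on the varying domain $\Omega_s^*$, Proposition~\ref{pro32} must be used as a bookkeeping device for the combined variation of the integrand rather than tracking $u_s$ and the evaluation point separately, and the factor $q$ has to be extracted with the correct sign from the $|\cdot|^q$ power and the chain rule at the moving boundary point.
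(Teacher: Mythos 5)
Your overall skeleton matches the paper's: rewrite $\widetilde{Q}_{q,n-p}(\langle\rho_s\rangle^*)$ via $\rho_{\langle\rho_s\rangle^*}=h_{\langle\rho_s\rangle}^{-1}$, differentiate the product under the integral sign, and get the coefficient $-p$ from Lemma \ref{lem21} applied to $h_{\langle\rho_s\rangle}^{-p}$. That half is fine. The gap is in the second factor. You assert that the contribution of $\tfrac{d}{ds}\big|_{s=0}|\nabla u_s(r_{\Omega_s^*}(v))|^q$ to the integrand equals $-pq\,h_{\langle\rho_0\rangle}^{-p}(v)\,g(\alpha^*_{\langle\rho_0\rangle}(v))\,|\nabla u_0|^q$ pointwise. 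This is false as a pointwise identity: the shape derivative of $|\nabla u_s|^q$ at the moving boundary point contains, besides the local term $q|\nabla u|^{q-1}(\nabla^2u\,\cdot)\,\rho\,g$, the genuinely nonlocal term $-q|\nabla u|^{q-1}(\nabla\dot u)\cdot v$, where $\dot u$ solves a linearized $q$-Laplace problem on the whole domain. The paper's proof handles exactly this: it packages the two terms into an operator $\mathcal{D}$ acting on the perturbation $-pg\rho^p$, invokes the self-adjointness of $\mathcal{D}$ (imported from Huang--Song--Xu \cite{HY0} and Colesanti et al.\ \cite{C0}) to move $\mathcal{D}$ onto the weight $\rho^p$, and then uses the $q$-homogeneity identity $\mathcal{D}(\rho^p)=q|\nabla u|^q$ to produce the factor $q$ \emph{after integration}. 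Nothing in your proposal supplies a substitute for this integration-by-parts step.

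Your proposed workaround --- applying Proposition \ref{pro32} to the polar family $\tilde\rho_s=1/h_{\langle\rho_s\rangle}$ and "subtracting" the known variation of the $\rho^n$-part --- cannot close the gap. Proposition \ref{pro32} only yields the integrated quantity $\int_{S^{n-1}}\rho^n\,\tfrac{d}{ds}|\nabla u_s|^q\,dv$ (the variation of $\widetilde T_q$ tests the unknown derivative against the weight $\rho^n$), whereas the theorem requires $\int_{S^{n-1}}\rho^p\,\tfrac{d}{ds}|\nabla u_s|^q\,dv$ for arbitrary $p$. Since $\tfrac{d}{ds}|\nabla u_s|^q$ is not determined pointwise by any single integrated identity, you cannot transfer the information from the weight $\rho^n$ to the weight $\rho^p$; this transfer is precisely what the self-adjointness of $\mathcal{D}$ accomplishes. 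To repair the argument you must either prove the self-adjointness of the Hadamard variational operator for the $q$-torsion function (as in \cite{HY0}) or reproduce an equivalent duality identity for the linearized $q$-Laplace equation; your closing remark correctly identifies the obstacle but does not resolve it.
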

\begin{proof}This proof is similar to \cite[Theorem 4.4]{HY}, however, due to the existence of $|\nabla u|$, it is even more difficult than proof of \cite[Theorem 4.4]{HY}. Here, we omit \cite[page 364: lines 1-22]{HY} to only write the calculation parts. From (\ref{eq305}) and Lemma \ref{lem21}, we have
\begin{align*}
&\lim_{s\rightarrow 0}\frac{\widetilde{Q}_{q,n-p}(\langle\rho_s\rangle^*)-\widetilde{Q}_{q,n-p}(\langle\rho_0\rangle^*)}{s}=\frac{d}{ds}\widetilde{Q}_{q,n-p}(\langle\rho_s\rangle^*)\bigg|_{s=0}\\
=&\frac{b}{a}\int_{S^{n-1}}\bigg(\frac{d}{ds}\rho^p_{\langle\rho_s\rangle^*}(v)\bigg|_{s=0}|\nabla u(r_{\langle\rho_0\rangle^*}(v))|^q+\rho^p_{\langle\rho_0\rangle^*}(v)\frac{d}{ds}|\nabla u(r_{\langle\rho_s\rangle^*}(v))|^q\bigg|_{s=0}\bigg)dv\\
=&\frac{b}{a}\int_{S^{n-1}}\bigg(\frac{d}{ds}\rho^p_{\langle\rho_s\rangle^*}(v)\bigg|_{s=0}|\nabla u(r_{\langle\rho_0\rangle^*}(v))|^q+\rho^p_{\langle\rho_0\rangle^*}(v)\frac{d}{ds}|\nabla u(\rho_{\langle\rho_s\rangle^*}(v)v)|^q\bigg|_{s=0}\bigg)dv\\
=&\frac{b}{a}\int_{S^{n-1}}\bigg(\frac{d}{ds}h^{-p}_{\langle\rho_s\rangle}(v)\bigg|_{s=0}|\nabla u(r_{\langle\rho_0\rangle^*}(v))|^q+\rho^p_{\langle\rho_0\rangle^*}(v)\frac{d}{ds}|\nabla u(h^{-1}_{\langle\rho_s\rangle}(v)v)|^q\bigg|_{s=0}\bigg)dv\\
=&\frac{b}{a}\int_{S^{n-1}}\bigg(\lim_{s\rightarrow0}\frac{h^{-p}_{\langle\rho_s\rangle}(v)-h^{-p}_{\langle\rho_0\rangle}(v)}{s}|\nabla u(r_{\langle\rho_0\rangle^*}(v))|^q+\rho^p_{\langle\rho_0\rangle^*}(v)\frac{d}{ds}|\nabla u(h^{-1}_{\langle\rho_s\rangle}(v)v)|^q\bigg|_{s=0}\bigg)dv\\
=&\frac{b}{a}\int_{S^{n-1}\backslash \eta_0}-ph^{-p}_{\langle\rho_0\rangle}(v)g(\alpha^*_{\langle\rho_0\rangle}(v))|\nabla u(r_{\langle\rho_0\rangle^*}(v))|^qdv\\
&+\frac{b}{a}\int_{S^{n-1}}\rho^p_{\langle\rho_0\rangle^*}(v)\frac{d}{ds}|\nabla u(h^{-1}_{\langle\rho_s\rangle}(v)v)|^q\bigg|_{s=0}dv\\
=&\frac{b}{a}\int_{S^{n-1}\backslash \eta_0}-p\rho^{p}_{\langle\rho_0\rangle^*}(v)g(\alpha^*_{\langle\rho_0\rangle}(v))|\nabla u(r_{\langle\rho_0\rangle^*}(v))|^qdv\\
&+\frac{b}{a}\int_{S^{n-1}}\rho^p_{\langle\rho_0\rangle^*}(v)\frac{d}{ds}|\nabla u(h^{-1}_{\langle\rho_s\rangle}(v)v)|^q\bigg|_{s=0}dv.
\end{align*}
Recall that
\begin{align*}|\nabla u(h^{-1}_{\langle\rho_s\rangle}(v)v)|=-\nabla u(h^{-1}_{\langle\rho_s\rangle}(v)v)\cdot v.
\end{align*}
Thus,
\begin{align*}
&\frac{d}{ds}|\nabla u(h^{-1}_{\langle\rho_s\rangle}(v)v)|^q\bigg|_{s=0}\\
=&q|\nabla u(h^{-1}_{\langle\rho_0\rangle}(v)v)|^{q-1}\frac{d}{ds}|\nabla u(h^{-1}_{\langle\rho_s\rangle}(v)v)|\bigg|_{s=0}\\
=&-q|\nabla u(h^{-1}_{\langle\rho_0\rangle}(v)v)|^{q-1}\bigg((\nabla ^2u(h^{-1}_{\langle\rho_0\rangle}(v)v)\frac{d}{ds}(h^{-1}_{\langle\rho_s\rangle}(v)v))\cdot v+(\nabla \dot{u}(h^{-1}_{\langle\rho_0\rangle}(v)v))\cdot v\bigg)\\
=&-q|\nabla u(h^{-1}_{\langle\rho_0\rangle}(v)v)|^{q-1}\bigg((\nabla ^2u(h^{-1}_{\langle\rho_0\rangle}(v)v)[-h^{-1}_{\langle\rho_0\rangle}(v)g(\alpha^*_{\langle\rho_0\rangle}(v))]v)\cdot v+(\nabla \dot{u}(h^{-1}_{\langle\rho_0\rangle}(v)v))\cdot v\bigg)\\
=&-q|\nabla u(r_{\langle\rho_0\rangle^*}(v))|^{q-1}\bigg(\nabla ^2u(r_{\langle\rho_0\rangle^*}(v))[-\rho_{\langle\rho_0\rangle^*}(v)g(\alpha^*_{\langle\rho_0\rangle}(v))]+(\nabla \dot{u}(r_{\langle\rho_0\rangle^*}(v)))\cdot v\bigg)\\
=&q|\nabla u(r_{\langle\rho_0\rangle^*}(v))|^{q-1}\nabla ^2u(r_{\langle\rho_0\rangle^*}(v))\rho_{\langle\rho_0\rangle^*}(v)g(\alpha^*_{\langle\rho_0\rangle}(v))-q|\nabla u(r_{\langle\rho_0\rangle^*}(v))|^{q-1}(\nabla \dot{u}(r_{\langle\rho_0\rangle^*}(v)))\cdot v.\\
\end{align*}

Denote (see \cite{C0} or \cite{HY0})
\begin{align*} 
\frac{d}{ds}|\nabla u(r_{\langle\rho_s\rangle^*}(v))|^q\bigg|_{s=0}=&\frac{d}{ds}|\nabla u(\rho_{\langle\rho_s\rangle^*}(v)v)|^q\bigg|_{s=0}\\
=&\mathcal{D}(-pg(\alpha^*_{\langle\rho_0\rangle}(v))\rho^p_{\langle\rho_0\rangle^*}(v))\\
=&\mathcal{D}_1(-pg(\alpha^*_{\langle\rho_0\rangle}(v))\rho^p_{\langle\rho_0\rangle^*}(v))+\mathcal{D}_2(-pg(\alpha^*_{\langle\rho_0\rangle}(v))\rho^p_{\langle\rho_0\rangle^*}(v)) \end{align*}
with
\begin{align*}
\mathcal{D}_1(-pg(\alpha^*_{\langle\rho_0\rangle}(v))\rho^p_{\langle\rho_0\rangle^*}(v))=q|\nabla u(r_{\langle\rho_0\rangle^*}(v))|^{q-1}\nabla ^2u(r_{\langle\rho_0\rangle^*}(v))\rho_{\langle\rho_0\rangle^*}(v)g(\alpha^*_{\langle\rho_0\rangle^*}(v)),
\end{align*}
and
\begin{align*}
\mathcal{D}_2(-pg(\alpha^*_{\langle\rho_0\rangle}(v))\rho^p_{\langle\rho_0\rangle^*}(v))=-q|\nabla u(r_{\langle\rho_0\rangle^*}(v))|^{q-1}(\nabla \dot{u}(r_{\langle\rho_0\rangle^*}(v)))\cdot v.
\end{align*}
We can see that $\mathcal{D}$ is a self-adjoint operator on $S^{n-1}$, i.e.,
\begin{align*}
\int_{S^{n-1}}\varphi^1\mathcal{D}\varphi^2=\int_{S^{n-1}}\varphi^2\mathcal{D}\varphi^1.
\end{align*}
Indeed, $\mathcal{D}_1$ is self-adjoint obviously. In addition, according to the conclusion of \cite[last line of page 69]{HY0}, we know that $\mathcal{D}_2$ is self-adjoint.

By the $q$-homogeneity of $l(u)=|\nabla u|^q$, it yields that
\begin{align*}
\mathcal{D}(\rho^p_{\langle\rho_0\rangle^*})=q|\nabla u|^q.
\end{align*}
Hence, based on the above calculations, we get
\begin{align*}
&\lim_{s\rightarrow 0}\frac{\widetilde{Q}_{q,n-p}(\langle\rho_s\rangle^*)-\widetilde{Q}_{q,n-p}(\langle\rho_0\rangle^*)}{s}=\frac{d}{ds}\widetilde{Q}_{q,n-p}(\langle\rho_s\rangle^*)\bigg|_{s=0}\\
=&\frac{b}{a}\int_{S^{n-1}\backslash \eta_0}\bigg(-p\rho^{p}_{\langle\rho_0\rangle^*}(v)g(\alpha^*_{\langle\rho_0\rangle^*}(v))|\nabla u(r_{\langle\rho_0\rangle^*}(v))|^q+\rho^p_{\langle\rho_0\rangle^*}(v)\mathcal{D}(-pg(\alpha^*_{\langle\rho_0\rangle}(v))\rho^p_{\langle\rho_0\rangle^*}(v))\bigg)dv\\
=&\frac{b}{a}\int_{S^{n-1}\backslash \eta_0}\bigg(-p\rho^{p}_{\langle\rho_0\rangle^*}(v)g(\alpha^*_{\langle\rho_0\rangle^*}(v))|\nabla u(r_{\langle\rho_0\rangle^*}(v))|^q-pg(\alpha^*_{\langle\rho_0\rangle}(v))\rho^p_{\langle\rho_0\rangle^*}\mathcal{D}(\rho^p_{\langle\rho_0\rangle^*}(v))\bigg)dv\\
=&\frac{b}{a}\int_{S^{n-1}\backslash \eta_0}\bigg(-p\rho^{p}_{\langle\rho_0\rangle^*}(v)g(\alpha^*_{\langle\rho_0\rangle^*}(v))|\nabla u(r_{\langle\rho_0\rangle^*}(v))|^q-pqg(\alpha^*_{\langle\rho_0\rangle}(v))\rho^p_{\langle\rho_0\rangle^*}|\nabla u(r_{\langle\rho_0\rangle^*}(v))|^q\bigg)dv\\
=&\frac{-p(1+q)b}{a}\int_{S^{n-1}\backslash \eta_0}\rho^{p}_{\langle\rho_0\rangle^*}(v)g(\alpha^*_{\langle\rho_0\rangle^*}(v))|\nabla u(r_{\langle\rho_0\rangle^*}(v))|^qdv\\
=&\frac{-p(1+q)b}{a}\int_{S^{n-1}\setminus\eta_0}(\hat{g}\bm1_\eta)(\alpha_{\langle\rho_0\rangle^*}(v))\rho^p_{\langle\rho_0\rangle^*}(v)|\nabla u(r_{\langle\rho_0\rangle^*}(v))|^qdv\\
=&-p(1+q)\int_{S^{n-1}\setminus\eta_0}(\hat{g}\bm1_\eta)(\xi)d\widetilde{Q}_{q,n-p}(\langle\rho_0\rangle^*,\xi)\\
=&-p(1+q)\int_\eta g(\xi)d\widetilde{Q}_{q,n-p}(\langle\rho_0\rangle^*,\xi).
\end{align*}
Here, $g(\alpha_{\langle\rho_0\rangle^*}(v))=(\hat{g}\bm1_\eta)(\alpha_{\langle\rho_0\rangle^*}(v))$, it has been proven that $g$ can be extended to a continuous function $\hat{g}:S^{n-1}\rightarrow\mathbb{R}$, (see \cite[page 364]{HY}) for all $v\in S^{n-1}\setminus \eta_0$.
\end{proof}
\begin{theorem}\label{the390}
Let $\Omega\in\mathcal{K}^n_o$ and $f:S^{n-1}\rightarrow\mathbb{R}$ be continuous. If $[h_s]$ is a logarithmic family of the Wulff shapes with respect to $(h_\Omega, f)$, then, for $p\neq 0$ and $q>1$,
\begin{align*}
\lim_{s\rightarrow 0}\frac{\widetilde{Q}_{q,n-p}([h_s])-\widetilde{Q}_{q,n-p}(\Omega)}{s}=p(1+q)\int_{S^{n-1}}f(\xi)d\widetilde{Q}_{q,n-p}(\Omega,\xi).
\end{align*}
\end{theorem}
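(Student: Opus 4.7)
The plan is to reduce Theorem \ref{the390} to the convex-hull version already proved in Theorem \ref{the39} via the duality $[f]^{*}=\langle 1/f\rangle$ recorded in Section \ref{sec2}. Concretely, set
\begin{align*}
\rho_{s}(v)=\frac{1}{h_{s}(v)},\qquad v\in S^{n-1},\ s\in(-\delta,\delta).
\end{align*}
Since $\Omega\in\mathcal{K}_{o}^{n}$ we have $h_{\Omega}\in C^{+}(S^{n-1})$, so $\rho_{0}=1/h_{\Omega}$ is continuous and strictly positive. From the logarithmic expansion $\log h_{s}=\log h_{\Omega}+sf+o(s,\cdot)$ defining $[h_s]$ it follows immediately that
\begin{align*}
\log\rho_{s}(v)=\log\rho_{0}(v)+s(-f(v))+o(s,v),
\end{align*}
so $\langle\rho_{s}\rangle$ is a logarithmic family of convex hulls generated by $(\rho_{0},-f)$.

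Next I would record the duality identity $\langle\rho_{s}\rangle^{*}=[h_{s}]$. This follows from $[h_{s}]^{*}=\langle 1/h_{s}\rangle=\langle\rho_{s}\rangle$ together with the bipolar theorem $(K^{*})^{*}=K$, valid because $[h_{s}]\in\mathcal{K}_{o}^{n}$ for $|s|$ sufficiently small (a consequence of the Aleksandrov convergence lemma applied to $h_{s}\to h_{\Omega}$ uniformly). In particular $\langle\rho_{0}\rangle^{*}=[h_{\Omega}]=\Omega$.

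With these identifications in place, the theorem reduces to a direct application of Theorem \ref{the39} with $\eta=S^{n-1}$ (which is trivially not contained in any closed hemisphere) and with the continuous function $g=-f$. Indeed, Theorem \ref{the39} yields
\begin{align*}
\lim_{s\rightarrow 0}\frac{\widetilde{Q}_{q,n-p}(\langle\rho_{s}\rangle^{*})-\widetilde{Q}_{q,n-p}(\langle\rho_{0}\rangle^{*})}{s}=-p(1+q)\int_{S^{n-1}}(-f(\xi))\,d\widetilde{Q}_{q,n-p}(\langle\rho_{0}\rangle^{*},\xi),
\end{align*}
and substituting $\langle\rho_{s}\rangle^{*}=[h_{s}]$ and $\langle\rho_{0}\rangle^{*}=\Omega$ gives exactly the claimed formula.

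The only genuinely delicate point is verifying that $[h_s]$ stays in $\mathcal{K}_{o}^{n}$ for small $s$ so that the polar/bipolar manipulation is legitimate; I would handle this by a routine compactness argument using uniform convergence $h_{s}\to h_{\Omega}>0$ on $S^{n-1}$. Apart from this, the argument is purely formal, since all of the hard analytic content (differentiability of $\rho^{p}$ along logarithmic families, self-adjointness of the operator $\mathcal{D}$, and the $q$-homogeneity of $|\nabla u|^{q}$) has already been absorbed into Theorem \ref{the39}.
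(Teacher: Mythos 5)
Your proposal is correct and follows essentially the same route as the paper: the authors likewise pass to $\rho_s=1/h_s$, observe that $\langle\rho_s\rangle$ is a logarithmic family of convex hulls generated by $(\rho_{\Omega^*},-f)$ with $\langle\rho_s\rangle^*=[h_s]$, and then invoke Theorem \ref{the39} with $g=-f$, the sign change converting $-p(1+q)\int(-f)\,d\widetilde{Q}_{q,n-p}$ into the stated $p(1+q)\int f\,d\widetilde{Q}_{q,n-p}$. Your explicit attention to $[h_s]\in\mathcal{K}_o^n$ for small $s$ (so that the bipolar identity applies) is a point the paper handles only implicitly, but the argument is the same.
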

\begin{proof}
From the definition of $p$-th dual $q$-torsional rigidity (\ref{eq305}) and Theorem \ref{the39}, we attain
\begin{align*}
&\lim_{s\rightarrow 0}\frac{\widetilde{Q}_{q,n-p}([h_s])-\widetilde{Q}_{q,n-p}(\Omega)}{s}=\frac{d}{ds}\widetilde{Q}_{q,n-p}([h_s])\bigg|_{s=0}\\
=&\frac{b}{a}\int_{S^{n-1}}\bigg(\frac{d}{ds}\rho^p_{[h_s]}(v)\bigg|_{s=0}|\nabla u(r_\Omega(v))|^q+\rho^p_\Omega(v)\frac{d}{ds}|\nabla u(r_{[h_s]}(v))|^q\bigg|_{s=0}\bigg)dv\\
=&\frac{b}{a}\int_{S^{n-1}}\bigg(\lim_{s\rightarrow0}\frac{\rho^p_{[h_s]}(v)-\rho^p_\Omega(v)}{s}|\nabla u(r_\Omega(v))|^q+\rho^p_\Omega(v)\frac{d}{ds}|\nabla u(r_{[h_s]}(v))|^q\bigg|_{s=0}\bigg)dv\\
=&\frac{p(1+q)b}{a}\int_{S^{n-1}}f(\alpha_\Omega(v))\rho^p_\Omega(v)|\nabla u(r_\Omega(v))|^qdv\\
=&p(1+q)\int_{S^{n-1}}f(\xi)d\widetilde{Q}_{q,n-p}(\Omega,\xi).
\end{align*}

Here, the last second equality used Theorem \ref{the39}. For the convenience of readers, we give a simple explanation. The logarithmic family of Wulff shapes $[h_s]$ is defined as the Wulff shape of $h_s$, where $h_s$ is given by
\begin{align*}
\log h_s=\log h_\Omega+sf+o(s,\cdot).
\end{align*}
This and $\frac{1}{h_\Omega}=\rho_\Omega^*$, allow us to define
\begin{align*}
\log \rho_s^*=\log \rho_\Omega^*-sf-o(s,\cdot),
\end{align*}
and $\rho_s^*$ will generate a logarithmic family of convex hull $\langle\Omega^*,-f,-o,s\rangle$. From \cite[Lemma 2.8]{HY}, we know that $\langle\rho_s\rangle^*=[h_s]$ and $\langle\rho_0\rangle^*=[h_0]$, thus
\begin{align*}
[\Omega,f,o,s]=\langle\Omega^*,-f,-o,s\rangle^*.
\end{align*}
The desired result follows directly from Theorem \ref{the39}.
\end{proof}
\begin{corollary}\label{cor391}
Let $\Omega_1, \Omega_2\in\mathcal{K}_o^n$, $p\neq 0$ and $q>1$. Then
\begin{align*}
\lim_{s\rightarrow 0}\frac{\widetilde{Q}_{q,n-p}((1-s)\Omega_1+s\Omega_2)-\widetilde{Q}_{q,n-p}(\Omega_1)}{s}=p[\widetilde{Q}(\Omega_1,\Omega_2)-\widetilde{Q}_{q,n-p}(\Omega_1)],
\end{align*}
and
\begin{align*}
\lim_{s\rightarrow 0}\frac{\widetilde{Q}_{q,n-p}((1-s)\Omega_1+_0s\Omega_2)-\widetilde{Q}_{q,n-p}(\Omega_1)}{s}=p\int_{S^{n-1}}\log\bigg(\frac{h_{\Omega_2}(\xi)}{h_{\Omega_1}(\xi)}\bigg)
d\widetilde{Q}_{q,n-p}(\Omega_1,\xi).
\end{align*}
Here, $\widetilde{Q}(\Omega_1,\Omega_2)=\int_{S^{n-1}}\frac{h_{\Omega_2}(v)}{h_{\Omega_1}(v)}d\widetilde{Q}_{q,n-p}(\Omega_1,\xi)$.
\end{corollary}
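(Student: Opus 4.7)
The plan is to realize each of the two deformations in the corollary as a logarithmic family of Wulff shapes for $(h_{\Omega_1}, f)$ with an explicit continuous $f : S^{n-1} \to \mathbb{R}$, and then apply the variational formula of Theorem \ref{the390} directly.

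For the first limit, set $h_s := (1-s)h_{\Omega_1} + s h_{\Omega_2}$. Since support functions add linearly under Minkowski addition, $h_s$ is the support function of $(1-s)\Omega_1 + s\Omega_2$, so the Wulff shape $[h_s]$ equals this Minkowski combination. Because $\Omega_1 \in \mathcal{K}_o^n$, the function $h_{\Omega_1}$ is bounded below by a positive constant on $S^{n-1}$; writing $h_s/h_{\Omega_1} = 1 + s(h_{\Omega_2}/h_{\Omega_1} - 1)$ and taking logarithms yields
\begin{align*}
\log h_s(v) = \log h_{\Omega_1}(v) + s\left(\frac{h_{\Omega_2}(v)}{h_{\Omega_1}(v)} - 1\right) + o(s, v),
\end{align*}
where $o(s,v)/s \to 0$ uniformly in $v \in S^{n-1}$. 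Thus $[h_s]$ is a logarithmic family for $(h_{\Omega_1}, f)$ with $f = h_{\Omega_2}/h_{\Omega_1} - 1$, and Theorem \ref{the390} expresses the derivative at $s=0$ as a constant multiple of $\int_{S^{n-1}} (h_{\Omega_2}/h_{\Omega_1} - 1)\, d\widetilde{Q}_{q,n-p}(\Omega_1, \cdot)$. Splitting this integral and recognizing $\widetilde{Q}(\Omega_1, \Omega_2) = \int (h_{\Omega_2}/h_{\Omega_1})\, d\widetilde{Q}_{q,n-p}(\Omega_1, \cdot)$ together with $\widetilde{Q}_{q,n-p}(\Omega_1) = \widetilde{Q}_{q,n-p}(\Omega_1, S^{n-1})$ produces the stated identity.

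For the second limit, the $L_0$-combination is defined in Section \ref{sec2} as the Wulff shape of $h_s := h_{\Omega_1}^{1-s} h_{\Omega_2}^s$, so
\begin{align*}
\log h_s(v) = \log h_{\Omega_1}(v) + s \log\frac{h_{\Omega_2}(v)}{h_{\Omega_1}(v)}
\end{align*}
holds as an \emph{exact} equality with vanishing error term. Hence $[h_s]$ is a logarithmic family for $(h_{\Omega_1}, f)$ with $f = \log(h_{\Omega_2}/h_{\Omega_1})$, which is continuous since $h_{\Omega_1}, h_{\Omega_2}$ are positive and continuous on $S^{n-1}$. A direct application of Theorem \ref{the390} yields the second formula.

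Neither part presents a genuine obstacle once the logarithmic structure is recognized; the only bookkeeping is the uniform smallness of the remainder $o(s, v)$ in the Minkowski case, which follows immediately from the positive lower and upper bounds on $h_{\Omega_1}$ and $h_{\Omega_2}$ enjoyed by bodies in $\mathcal{K}_o^n$. Thus the corollary is essentially a direct specialization of the general variational formula Theorem \ref{the390} to the Minkowski and $L_0$ one-parameter families.
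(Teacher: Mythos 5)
Your proof is correct and follows essentially the same route as the paper's: realize each one-parameter family as a logarithmic family of Wulff shapes for $(h_{\Omega_1},f)$ with $f=h_{\Omega_2}/h_{\Omega_1}-1$ (Minkowski case) or $f=\log(h_{\Omega_2}/h_{\Omega_1})$ ($L_0$ case) and apply Theorem \ref{the390}. One caveat: a faithful application of Theorem \ref{the390} produces the factor $p(1+q)$ in front of both integrals --- exactly as the paper's own proof concludes --- whereas the corollary as stated displays only $p$, so your claim that the computation ``produces the stated identity'' holds only up to this factor of $(1+q)$; this is an inconsistency between the paper's statement and its proof rather than a defect in your argument.
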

\begin{proof}
For sufficiently small $s$, we define $h_s$ by
\begin{align*}
h_s=(1-s)h_{\Omega_1}+sh_{\Omega_2}=h_{\Omega_1}+s(h_{\Omega_2}-h_{\Omega_1}),
\end{align*}
taking the logarithm of both sides of the above equality, we obtain the following form:
\begin{align*}
\log h_s=\log h_{\Omega_1}+s\bigg(\frac{h_{\Omega_2}-h_{\Omega_1}}{h_{\Omega_1}}\bigg)+o(s,\cdot).
\end{align*}
From Theorem \ref{the390}, we get
\begin{align*}
&\lim_{s\rightarrow 0}\frac{\widetilde{Q}_{q,n-p}((1-s)\Omega_1+s\Omega_2)-\widetilde{Q}_{q,n-p}(\Omega_1)}{s}\\
=&p(1+q)\int_{S^{n-1}}\frac{h_{\Omega_2}-h_{\Omega_1}}{h_{\Omega_1}}d\widetilde{Q}_{q,n-p}(\Omega_1,\xi)\\
=&p(1+q)\int_{S^{n-1}}\frac{h_{\Omega_2}}{h_{\Omega_1}}d\widetilde{Q}_{q,n-p}(\Omega_1,\xi)-p(1+q)\int_{S^{n-1}}d\widetilde{Q}_{q,n-p}(\Omega_1,\xi)\\
=&p(1+q)[\widetilde{Q}(\Omega_1,\Omega_2)-\widetilde{Q}_{q,n-p}(\Omega_1)].
\end{align*}
Similarly, for sufficiently small $s$, we can also denote $h_s$ by
\begin{align*}
h_s=h_{\Omega_1}^{1-s}h_{\Omega_2}^s=h_{\Omega_1}\bigg(\frac{h_{\Omega_2}}{h_{\Omega_1}}\bigg)^s,
\end{align*}
then,
\begin{align*}
\log h_s=\log h_{\Omega_1}+s\bigg(\frac{h_{\Omega_2}}{h_{\Omega_1}}\bigg).
\end{align*}
Thus, we have following result by Theorem \ref{the390},
\begin{align*}
\lim_{s\rightarrow 0}\frac{\widetilde{Q}_{q,n-p}((1-s)\Omega_1+_0s\Omega_2)-\widetilde{Q}_{q,n-p}(\Omega_1)}{s}=&p(1+q)\int_{S^{n-1}}\log \frac{h_{\Omega_2}}{h_{\Omega_1}}d\widetilde{Q}_{q,n-p}(\Omega_1,\xi).
\end{align*}
\end{proof}
\begin{corollary}\label{cor392}
Let $\Omega_1, \Omega_2, \Omega_3\in\mathcal{K}_o^n$, $p\neq 0$ and $q>1$. Then
\begin{align*}
&\lim_{s\rightarrow 0}\frac{\widetilde{Q}_{q,p}((1-s)\Omega_1+_0s\Omega_2,\Omega_3)-\widetilde{Q}_{q,p}(\Omega_1,\Omega_3)}{s}\\
=&p(1+q)\int_{S^{n-1}}\log \frac{h_{\Omega_2}}{h_{\Omega_1}}d\widetilde{Q}_{q,p}(\Omega_1,\Omega_3,\xi),
\end{align*}
where
\begin{align*}
\widetilde{Q}_{q,p}(\Omega_1,\Omega_3,\eta)=\frac{b}{a}\int_{\alpha^*_{\Omega_1}(\eta)}\rho_{\Omega_1}(\xi)^p\rho_{\Omega_3}(\xi)^{n-p}|\nabla u(r_{\Omega_1}(\xi))|^qd\xi.
\end{align*}
\end{corollary}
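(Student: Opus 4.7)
The plan is to parallel the proof of the logarithmic case of Corollary \ref{cor391}, adapting the variational calculations of Theorems \ref{the39}--\ref{the390} to accommodate the fixed weight $\rho_{\Omega_3}^{n-p}$ arising from the third body.

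First I set $h_s:=h_{\Omega_1}^{1-s}h_{\Omega_2}^s$, so that
\begin{align*}
\log h_s=\log h_{\Omega_1}+s\log\bigl(h_{\Omega_2}/h_{\Omega_1}\bigr)+o(s,\cdot),
\end{align*}
which realizes $\{[h_s]\}$ as a logarithmic family of Wulff shapes based at $\Omega_1$ with parameter $f:=\log(h_{\Omega_2}/h_{\Omega_1})$. The corollary then follows immediately, by substituting this choice of $f$, from the following weighted analogue of Theorem \ref{the390}:
\begin{align*}
\lim_{s\to 0}\frac{\widetilde{Q}_{q,p}([h_s],\Omega_3)-\widetilde{Q}_{q,p}(\Omega_1,\Omega_3)}{s}=p(1+q)\int_{S^{n-1}}f(\xi)\,d\widetilde{Q}_{q,p}(\Omega_1,\Omega_3,\xi).
\end{align*}

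To establish this formula, I would differentiate $\widetilde{Q}_{q,p}([h_s],\Omega_3)=\frac{b}{a}\int_{S^{n-1}}\rho_{[h_s]}^p\,\rho_{\Omega_3}^{n-p}\,|\nabla u_s|^q\,dv$ at $s=0$ under the integral sign, which is justified by dominated convergence together with the uniform bounds in Lemma \ref{lem21}. The calculation then parallels the proof of Theorem \ref{the39} verbatim, with the single modification that the $s$-independent factor $\rho_{\Omega_3}^{n-p}$ is carried throughout as a fixed weight on $S^{n-1}$. Specifically, the variation of $\rho_{[h_s]}^p$ is computed via Lemma \ref{lem21} and the polarity bridge appearing at the end of Theorem \ref{the390}'s proof, yielding $\frac{d}{ds}\rho_{[h_s]}^p(v)\big|_{s=0}=p\rho_{\Omega_1}^p(v) f(\alpha_{\Omega_1}(v))$ and contributing $p\cdot\frac{b}{a}\int f(\alpha_{\Omega_1}(v))\rho_{\Omega_1}^p\rho_{\Omega_3}^{n-p}|\nabla u|^q\,dv$. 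The variation of $|\nabla u_s|^q$ is handled via the self-adjoint operator $\mathcal{D}$ and the $q$-homogeneity identity $\mathcal{D}(\rho_{\Omega_1}^p)=q|\nabla u|^q$ from the proof of Theorem \ref{the39}, and contributes $pq\cdot\frac{b}{a}\int f(\alpha_{\Omega_1}(v))\rho_{\Omega_1}^p\rho_{\Omega_3}^{n-p}|\nabla u|^q\,dv$. Summing the two pieces produces the coefficient $p(1+q)$, and the integration identity analogous to (\ref{eq306}), namely $\int g(\xi)\,d\widetilde{Q}_{q,p}(\Omega_1,\Omega_3,\xi)=\frac{b}{a}\int g(\alpha_{\Omega_1}(v))\rho_{\Omega_1}^p\rho_{\Omega_3}^{n-p}|\nabla u|^q\,dv$, repackages the right-hand side into $p(1+q)\int f\,d\widetilde{Q}_{q,p}(\Omega_1,\Omega_3,\cdot)$.

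The main obstacle I expect is this second (i.e.\ $|\nabla u_s|^q$) contribution: showing that the self-adjointness machinery of Theorem \ref{the39} transfers cleanly to the weighted integrand. Concretely, one must verify
\begin{align*}
\int_{S^{n-1}}\rho_{\Omega_1}^p\,\rho_{\Omega_3}^{n-p}\,\mathcal{D}\bigl(-p f\rho_{\Omega_1}^p\bigr)\,dv=-pq\int_{S^{n-1}}f\rho_{\Omega_1}^p\rho_{\Omega_3}^{n-p}|\nabla u|^q\,dv,
\end{align*}
which is to be obtained by invoking the self-adjoint pairing $\int\varphi_1\mathcal{D}\varphi_2\,dv=\int\varphi_2\mathcal{D}\varphi_1\,dv$ with the fixed, $s$-independent factor $\rho_{\Omega_3}^{n-p}$ treated as a test weight and then applying $\mathcal{D}(\rho_{\Omega_1}^p)=q|\nabla u|^q$. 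Once this weighted identity is secured, Corollary \ref{cor392} follows at once from the chain of displayed formulas above.
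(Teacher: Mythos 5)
Your overall strategy---realizing $(1-s)\Omega_1+_0 s\Omega_2$ as the Wulff-shape family $[h_s]$ with $\log h_s=\log h_{\Omega_1}+s\log(h_{\Omega_2}/h_{\Omega_1})$ and then re-running the variational computation of Theorems \ref{the39}--\ref{the390} with the fixed weight $\rho_{\Omega_3}^{n-p}$ carried through---is the only honest way to attack this statement: the paper itself gives just a one-line reduction to formula (\ref{eq304}), Definition \ref{def34} and Corollary \ref{cor391}, but Corollary \ref{cor391} concerns the unweighted functional $\widetilde{Q}_{q,n-p}(\Omega_1)=\frac{b}{a}\int\rho_{\Omega_1}^p|\nabla u|^q\,dv$ and cannot by itself produce the weight $\rho_{\Omega_3}^{n-p}$, so your more explicit route is the one that actually has to be carried out. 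Your treatment of the first contribution (the variation of $\rho_{[h_s]}^p$, via Lemma \ref{lem21} and the polarity bridge) is correct and yields $p\int f\,d\widetilde{Q}_{q,p}(\Omega_1,\Omega_3,\cdot)$.

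The gap sits exactly at the step you yourself flag as the main obstacle, and your proposed resolution does not close it. The self-adjoint pairing gives
\begin{align*}
\int_{S^{n-1}}\rho_{\Omega_1}^p\rho_{\Omega_3}^{n-p}\,\mathcal{D}\bigl(-pf\rho_{\Omega_1}^p\bigr)\,dv
=\int_{S^{n-1}}\bigl(-pf\rho_{\Omega_1}^p\bigr)\,\mathcal{D}\bigl(\rho_{\Omega_1}^p\rho_{\Omega_3}^{n-p}\bigr)\,dv,
\end{align*}
so to reach $-pq\int f\rho_{\Omega_1}^p\rho_{\Omega_3}^{n-p}|\nabla u|^q\,dv$ you would need $\mathcal{D}(\rho_{\Omega_1}^p\rho_{\Omega_3}^{n-p})=\rho_{\Omega_3}^{n-p}\mathcal{D}(\rho_{\Omega_1}^p)=q\rho_{\Omega_3}^{n-p}|\nabla u|^q$, i.e.\ that $\mathcal{D}$ commutes with multiplication by the fixed weight. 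It does not: the component $\mathcal{D}_2$ encodes the shape derivative $\dot u$ of the torsion function, which solves a linearized boundary-value problem whose data depend on the perturbation on all of $S^{n-1}$, so $\mathcal{D}_2$ is nonlocal and $\mathcal{D}_2(\rho_{\Omega_1}^p\rho_{\Omega_3}^{n-p})\neq\rho_{\Omega_3}^{n-p}\mathcal{D}_2(\rho_{\Omega_1}^p)$ in general; moreover the homogeneity identity $\mathcal{D}(\rho_{\Omega_1}^p)=q|\nabla u|^q$ is available only for the dilation direction, whose first variation of $\rho^p$ is $\rho_{\Omega_1}^p$ itself, and says nothing about $\mathcal{D}$ evaluated at $\rho_{\Omega_1}^p\rho_{\Omega_3}^{n-p}$. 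Only the local multiplication part $\mathcal{D}_1$ transfers to the weighted setting; the $\mathcal{D}_2$ contribution genuinely changes, so the coefficient $q$ in the second term---and hence the factor $p(1+q)$---is not established by transposition. To repair the argument you would have to evaluate $\int\rho_{\Omega_1}^p\rho_{\Omega_3}^{n-p}\,\mathcal{D}_2(-pf\rho_{\Omega_1}^p)\,dv$ directly from the Hadamard-type variational formula for $\dot u$ as in \cite{C0, HY0}, rather than by the self-adjointness trick; the paper's own citation of Corollary \ref{cor391} does not supply this either.
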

\begin{proof} The result is directly obtained from formula (\ref{eq304}), Definition \ref{def34} and Corollary \ref{cor391}.
\end{proof}

\section{\bf Geometric flow and its associated functionals}\label{sec4}
In this subsection, we will introduce the geometric flow and its associated functionals to solve the $p$-th dual Minkowski problem for $q$-torsional rigidity with $q>1$. For convenience, the Gauss curvature flow is restated here. Let $\partial\Omega_0$ be a smooth, closed and origin-symmetric strictly convex hypersurface in $\mathbb{R}^n$ for $p<n$ ($p\neq 0$), $f$ is a positive smooth even function on $S^{n-1}$. Specially, for $p<0$, let $\partial\Omega_0$ be a smooth, closed strictly convex hypersurface in $\mathbb{R}^n$ containing the origin in its interior, $f$ is a positive smooth function on $S^{n-1}$. We consider the following Gauss curvature flow
\begin{align}\label{eq401}
\left\{
\begin{array}{lc}
\frac{\partial X(x,t)}{\partial t}=-\lambda(t)f(v)\frac{(|\nabla h|^2+h^2)^{\frac{n-p}{2}}}{|\nabla u(X,t)|^q}\mathcal{K}
(x,t)v+X(x,t),  \\
X(x,0)=X_0(x),\\
\end{array}
\right.
\end{align}
where $\mathcal{K}(x,t)$ is the Gauss curvature of hypersurface $\partial\Omega_t$ at $X(\cdot,t)$, $v=x$ is the unit outer
normal vector of $\partial\Omega_t$ at $X(\cdot,t)$, and $\lambda(t)$ is given by
\begin{align}\label{eq402}\lambda(t)=\frac{\int_{S^{n-1}}\rho^p|\nabla u|^qdv}{\int_{S^{n-1}}f(x)dx}=\frac{\frac{a}{b}\widetilde{Q}_{q,n-p}(\Omega_t)}{\int_{S^{n-1}}f(x)dx}.\end{align}

Taking the scalar product of both sides of the equation and of the initial condition in (\ref{eq401}) by $v$, by means of the definition of support function (\ref{eq203}) and formula (\ref{eq202}), we describe the flow (\ref{eq401}) with the support function as follows
\begin{align*}\left\{
\begin{array}{lc}
\frac{\partial h(x,t)}{\partial t}=-\lambda(t)f(x)\frac{(|\nabla h|^2+h^2)^{\frac{n-p}{2}}}{|\nabla u(\nabla h,t)|^q}\mathcal{K}
(x,t)+h(x,t),  \\
h(x,0)=h_0(x).\\
\end{array}
\right.\end{align*}
From this and $\rho^2=h^2+|\nabla h|^2$, we obtain
\begin{align}\label{eq403}\left\{
\begin{array}{lc}
\frac{\partial h(x,t)}{\partial t}=-\lambda(t)f(x)\frac{\rho(v,t)^{n-p}}{|\nabla u(\nabla h,t)|^q}\mathcal{K}
(x,t)+h(x,t),  \\
h(x,0)=h_0(x).\\
\end{array}
\right.\end{align}
Notice that
\begin{align}\label{eq404}
\frac{1}{\rho(v,t)}\frac{\partial\rho(v,t)}{\partial t}=\frac{1}{h(x,t)}\frac{\partial h(x,t)}{\partial t}.
\end{align}
Thus,
\begin{align}\label{eq405}\left\{
\begin{array}{lc}
\frac{\partial \rho(v,t)}{\partial t}=-\lambda(t)f(x)\frac{\rho(v,t)^{n-p+1}}{h(x,t)|\nabla u(\nabla h,t)|^q}\mathcal{K}
(v,t)+\rho(v,t),  \\
\rho(v,0)=\rho_0(v).\\
\end{array}
\right.\end{align}

Next, we discuss the characteristics of two essential geometric functionals with respect to Equation (\ref{eq403}) or (\ref{eq405}). Firstly, we show the $p$-th dual $q$-torsional rigidity unchanged along the flow (\ref{eq401}). In fact, the conclusion can be stated as the following lemma.

\begin{lemma}\label{lem41} Let $q>1$ and $p\neq 0$, then, the $p$-th dual $q$-torsional rigidity $\widetilde{Q}_{q,n-p}(\Omega_t)$ is unchanged with regard to Equation (\ref{eq405}) for $t\in[0,T)$, i.e.,
\begin{align*}\widetilde{Q}_{q,n-p}(\Omega_t)=\widetilde{Q}_{q,n-p}(\Omega_0).\end{align*}
Here, $T$ is the maximal time for existence of smooth solutions to the flow (\ref{eq401}).
\end{lemma}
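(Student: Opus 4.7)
The plan is to differentiate $\widetilde{Q}_{q,n-p}(\Omega_t)$ in $t$, substitute the flow equation in its support-function form (\ref{eq403}), and watch the terms collapse thanks to the normalization factor $\lambda(t)$. The variational formula already established in Theorem \ref{the390} turns the time derivative of $\widetilde{Q}_{q,n-p}$ into an integral against the dual measure, which can then be evaluated explicitly via (\ref{eq311}).

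Concretely, first I would rewrite (\ref{eq403}) in logarithmic form as
\begin{align*}
\frac{\partial \log h(x,t)}{\partial t} = -\lambda(t) f(x)\frac{\rho(v,t)^{n-p}\mathcal{K}(x,t)}{h(x,t)|\nabla u|^{q}} + 1.
\end{align*}
For each fixed $t\in[0,T)$, the smoothness of the flow yields
\begin{align*}
\log h(x,t+s) = \log h(x,t) + s\frac{\partial \log h(x,t)}{\partial t} + o(s,x),
\end{align*}
with $o(s,\cdot)/s \to 0$ uniformly on $S^{n-1}$, so $\{[h(\cdot,t+s)]\}_s$ is a logarithmic family of Wulff shapes at $s=0$ in the sense of the definition preceding Lemma \ref{lem21}. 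Applying Theorem \ref{the390} with $f$ there replaced by $\partial_t \log h(\cdot,t)$ gives
\begin{align*}
\frac{d}{dt}\widetilde{Q}_{q,n-p}(\Omega_t) = p(1+q)\int_{S^{n-1}} \frac{\partial \log h(x,t)}{\partial t}\, d\widetilde{Q}_{q,n-p}(\Omega_t,x).
\end{align*}

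Next, I would insert (\ref{eq311}), noting that $|\overline{\nabla}h|^{2}=|\nabla h|^{2}+h^{2}=\rho^{2}$ and $\det(h_{ij}+h\delta_{ij})=\mathcal{K}^{-1}$, so that
\begin{align*}
d\widetilde{Q}_{q,n-p}(\Omega_t,x) = \frac{b}{a}\,\rho^{\,p-n}\, h\,|\nabla u|^{q}\,\mathcal{K}^{-1}\, dx.
\end{align*}
Multiplying the two displayed expressions, the factor in the $-\lambda(t)f$ term exactly cancels the density, leaving $-\lambda(t)\frac{b}{a}f(x)\,dx$; the remaining $+1$ term reproduces $d\widetilde{Q}_{q,n-p}(\Omega_t,x)$. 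Hence
\begin{align*}
\frac{d}{dt}\widetilde{Q}_{q,n-p}(\Omega_t) = p(1+q)\bigg[\widetilde{Q}_{q,n-p}(\Omega_t) - \lambda(t)\cdot\frac{b}{a}\int_{S^{n-1}} f(x)\, dx\bigg],
\end{align*}
and the bracket vanishes identically by the very definition (\ref{eq402}) of $\lambda(t)$. Integrating from $0$ to $t$ yields the claim.

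The main obstacle is the rigorous application of Theorem \ref{the390} to a time-parameterized family rather than a one-parameter logarithmic family defined a priori; one must check that the $o(s,\cdot)$ remainder is uniform in $x$ and that differentiation under the integral sign is justified. Both are immediate while the classical solution exists, because $h(\cdot,t)$ is $C^{\infty}$ on $S^{n-1}\times[0,T)$ with $h$, $\rho$, $|\nabla u|$ and $\mathcal{K}$ bounded between positive constants on compact time subintervals. Beyond this routine justification, the remainder of the argument is algebraic cancellation engineered into the choice of $\lambda(t)$.
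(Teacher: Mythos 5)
Your proposal is correct and follows essentially the same route as the paper: both compute $\frac{d}{dt}\widetilde{Q}_{q,n-p}(\Omega_t)$ via the variational formula of Theorem \ref{the390}, substitute the flow equation, and observe that the definition (\ref{eq402}) of $\lambda(t)$ forces the derivative to vanish. The only (cosmetic) difference is that the paper carries out the final cancellation in the radial parameterization $dv$ using $\rho^n\mathcal{K}\,dv=h\,dx$, whereas you work in the normal parameterization $dx$ via the density formula (\ref{eq311}).
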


\begin{proof}
Let $h(\cdot,t)$ and $\rho(\cdot,t)$ be the support function and the radial function of $\Omega_t$, respectively, $u(X,t)$ is the solution of (\ref{eq102}) in $\Omega_t$. From (\ref{eq305}) and Theorem \ref{the390}, we know that
\begin{align*}\frac{\partial}{\partial t}\widetilde{Q}_{q,n-p}(\Omega_t)=\frac{p(1+q)b}{a}\int_{S^{n-1}}\rho^{p-1}(v,t)\frac{\partial\rho}{\partial t}|\nabla u|^qdv.
\end{align*}
Thus, from (\ref{eq402}), (\ref{eq405}) and $\rho^n\mathcal{K}dv=hdx$, we have
\begin{align*}\frac{\partial}{\partial t}\widetilde{Q}_{q,n-p}(\Omega_t)=&\frac{p(1+q)b}{a}\int_{S^{n-1}}\rho^{p-1}(v,t)[-\lambda(t)f(x)\frac{\rho^{n-p+1}}{h(x,t)|\nabla u|^q}\mathcal{K}(v,t)+\rho]|\nabla u|^qdv\\
=&\frac{p(1+q)b}{a}\bigg[\int_{S^{n-1}}-\lambda(t)f(x)\frac{\rho^n\mathcal{K}(v,t)}{h(x,t)}dv+\int_{S^{n-1}}\rho^p|\nabla u|^qdv\bigg]\\
=&\frac{p(1+q)b}{a}\bigg[-\frac{\int_{S^{n-1}}\rho^p|\nabla u|^qdv}{\int_{S^{n-1}}f(x)dx}\int_{S^{n-1}}f(x)dx+\int_{S^{n-1}}\rho^p|\nabla u|^qdv\bigg]\\
=&0.
\end{align*}
This ends the proof of Lemma \ref{lem41}.
\end{proof}

The next lemma will show that the functional (\ref{eq110}) is non-increasing along the flow (\ref{eq401}).
\begin{lemma}\label{lem42}
The functional (\ref{eq110}) is non-increasing along the flow (\ref{eq401}). Namely, $\frac{\partial}{\partial t}\Phi(\Omega_t)\leq0$, the equality holds if and only if the support function of $\Omega_t$ satisfies (\ref{eq108}).
\end{lemma}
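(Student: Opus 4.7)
The plan is to differentiate $\Phi(\Omega_t)$ directly, substitute the flow equation, and then reduce the claimed monotonicity to a single application of the Cauchy--Schwarz inequality after rewriting one of the two integrals using the standard Jacobian relation $\rho(v,t)^n\mathcal{K}(x,t)\,dv=h(x,t)\,dx$ between the Gauss and radial parametrizations. The equality case of Cauchy--Schwarz will produce exactly Equation (\ref{eq108}).

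First I would compute
\begin{align*}
\frac{\partial \Phi(\Omega_t)}{\partial t}
=\frac{1}{\int_{S^{n-1}}f\,dx}\int_{S^{n-1}}\frac{f(x)}{h(x,t)}\frac{\partial h}{\partial t}\,dx
-\frac{1}{p(1+q)}\cdot\frac{\frac{\partial}{\partial t}\int_{S^{n-1}}\rho^p|\nabla u|^q\,dv}{\int_{S^{n-1}}\rho^p|\nabla u|^q\,dv}.
\end{align*}
By Lemma \ref{lem41} and the relation $\widetilde{Q}_{q,n-p}(\Omega_t)=\frac{b}{a}\int_{S^{n-1}}\rho^p|\nabla u|^q\,dv$, the second summand vanishes. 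Substituting the scalar form (\ref{eq403}) into the first summand and using the definition (\ref{eq402}) of $\lambda(t)$ gives
\begin{align*}
\frac{\partial \Phi(\Omega_t)}{\partial t}
=1-\frac{\lambda(t)}{\int_{S^{n-1}}f\,dx}\int_{S^{n-1}}\frac{f(x)^2\,\rho^{n-p}\mathcal{K}}{h(x,t)|\nabla u|^q}\,dx.
\end{align*}
Thus it suffices to prove the inequality
\begin{align*}
\Big(\int_{S^{n-1}}f\,dx\Big)^{\!2}
\le \Big(\int_{S^{n-1}}\rho^p|\nabla u|^q\,dv\Big)\Big(\int_{S^{n-1}}\frac{f^2\,\rho^{n-p}\mathcal{K}}{h\,|\nabla u|^q}\,dx\Big).
\end{align*}

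The key step is to put both factors on a common integration variable. Using $\rho^n\mathcal{K}\,dv=h\,dx$, I would rewrite
\begin{align*}
\int_{S^{n-1}}\rho^p|\nabla u|^q\,dv=\int_{S^{n-1}}\frac{\rho^{p-n}h\,|\nabla u|^q}{\mathcal{K}}\,dx,
\end{align*}
so that Cauchy--Schwarz applied to the two factors $a(x)=\frac{\rho^{p-n}h|\nabla u|^q}{\mathcal{K}}$ and $b(x)=\frac{f^2\rho^{n-p}\mathcal{K}}{h|\nabla u|^q}$ yields
\begin{align*}
\Big(\int_{S^{n-1}}\!\sqrt{a(x)b(x)}\,dx\Big)^{\!2}=\Big(\int_{S^{n-1}}\!f\,dx\Big)^{\!2}\le \int_{S^{n-1}}\!a\,dx\cdot \int_{S^{n-1}}\!b\,dx,
\end{align*}
which is exactly what is required. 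Therefore $\partial_t\Phi(\Omega_t)\le 0$.

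Finally, equality in Cauchy--Schwarz happens iff $a(x)=Cb(x)$ for some constant $C>0$ a.e., which after simplification and using $\mathcal{K}=1/\det(h_{ij}+h\delta_{ij})$ and $\rho=\sqrt{|\nabla h|^2+h^2}$ becomes
\begin{align*}
f(x)=C'\,(|\nabla h|^2+h^2)^{\frac{p-n}{2}}\,h(x,t)\,|\nabla u(\nu^{-1}_{\Omega_t}(x))|^q\,\det(h_{ij}+h\delta_{ij}).
\end{align*}
Integrating this identity over $S^{n-1}$ and using the pointwise expression (\ref{eq311}) for the density of $\widetilde{Q}_{q,n-p}(\Omega_t,\cdot)$ forces $C'=\int_{S^{n-1}}f\,dx\big/[\tfrac{a}{b}\widetilde{Q}_{q,n-p}(\Omega_t)]$, which is precisely Equation (\ref{eq108}). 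The main (small) obstacle I anticipate is not the inequality itself but being careful about the two parametrizations $x\in S^{n-1}$ (outer normal) versus $v\in S^{n-1}$ (radial direction); once the Jacobian $\rho^n\mathcal{K}\,dv=h\,dx$ is invoked to convert $\int\rho^p|\nabla u|^q\,dv$ into an integral over $x$, the choice of the two factors for Cauchy--Schwarz is dictated by the requirement $\sqrt{ab}=f$ and everything else is routine.
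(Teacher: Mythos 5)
Your proposal is correct, and it reaches the same conclusion as the paper by a slightly different mechanism. The paper does not discard the second summand of $\frac{\partial}{\partial t}\Phi(\Omega_t)$: it keeps both terms, converts the $dv$-integral to a $dx$-integral via $\rho^n\mathcal{K}\,dv=h\,dx$ exactly as you do, and then observes that the bracketed factor multiplying $\frac{\partial h}{\partial t}$ is itself a negative multiple of $\frac{\partial h}{\partial t}$, so that
\begin{align*}
\frac{\partial}{\partial t}\Phi(\Omega_t)=-\int_{S^{n-1}}\frac{\bigl[-\lambda(t)\tfrac{f\rho^{n-p}\mathcal{K}}{|\nabla u|^q}+h\bigr]^2}{\tfrac{h\rho^{n-p}\mathcal{K}}{|\nabla u|^q}\int_{S^{n-1}}\rho^p|\nabla u|^q\,dv}\,dx\leq 0,
\end{align*}
a pointwise perfect square needing neither Lemma \ref{lem41} nor Cauchy--Schwarz, with equality iff $h_t\equiv 0$. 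You instead invoke the conservation law of Lemma \ref{lem41} to annihilate the second summand and then close the argument with Cauchy--Schwarz applied to $a(x)b(x)=f^2$; this is legitimate (Lemma \ref{lem41} is proved before Lemma \ref{lem42} and both proofs ultimately rest on the same variational formula of Theorem \ref{the390}, whose $p(1+q)$ factor is what the normalization $\tfrac{1}{p(1+q)}$ in (\ref{eq110}) is designed to cancel), and your identification of the equality case, including pinning down the constant by integrating against the density (\ref{eq311}), correctly recovers (\ref{eq108}). The paper's perfect-square identity is marginally more self-contained and makes the equality case immediate ($h_t\equiv 0$, i.e.\ the flow is stationary); your version makes transparent that, given the conservation of $\widetilde{Q}_{q,n-p}$, the monotonicity is exactly a Cauchy--Schwarz inequality between the two parametrizations. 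Both are standard and both are correct.
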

\begin{proof}
By (\ref{eq110}), (\ref{eq402}), (\ref{eq403}), (\ref{eq404}), $\rho^n\mathcal{K}dv=hdx$ and Theorem \ref{the390}, we obtain the following result for $p\neq 0$,
\begin{align*}
\frac{\partial}{\partial t}\Phi(\Omega_t)=&\frac{\int_{S^{n-1}}\frac{f(x)}{h}\frac{\partial h}{\partial t}dx}{\int_{S^{n-1}}f(x)dx}-\frac{\int_{S^{n-1}}\rho^{p-1}|\nabla u|^q\frac{\partial\rho}{\partial t}dv}{\int_{S^{n-1}}\rho^p|\nabla u|^qdv}\\
=&\frac{\int_{S^{n-1}}\frac{f(x)}{h}\frac{\partial h}{\partial t}dx}{\int_{S^{n-1}}f(x)dx}-\frac{\int_{S^{n-1}}\rho^{p-1}|\nabla u|^q\frac{\rho}{h}\frac{\partial h}{\partial t}\frac{h}{\rho^n\mathcal{K}}dx}{\int_{S^{n-1}}\rho^p|\nabla u|^qdv}\\
=&\int_{S^{n-1}}\frac{\partial h}{\partial t}\bigg[\frac{\frac{f(x)}{h}}{\int_{S^{n-1}}f(x)dx}-\frac{|\nabla u|^q}{\int_{S^{n-1}}\rho^p|\nabla u|^qdv}\frac{\rho^p}{h}\frac{h}{\rho^n\mathcal{K}}\bigg]dx\\
=&\int_{S^{n-1}}\frac{\partial h}{\partial t}\bigg[\frac{\frac{f(x)}{h}h\rho^{n-p}\mathcal{K}\int_{S^{n-1}}\rho^p|\nabla u|^qdv-|\nabla u|^qh\int_{S^{n-1}}f(x)dx}{h\rho^{n-p}\mathcal{K}\int_{S^{n-1}}f(x)dx\int_{S^{n-1}}\rho^p|\nabla u|^qdv}\bigg]dx\\
=&\int_{S^{n-1}}\frac{\partial h}{\partial t}\bigg[\frac{\frac{f(x)\rho^{n-p}\mathcal{K}}{|\nabla u|^q}\frac{\int_{S^{n-1}}\rho^p|\nabla u|^qdv}{\int_{S^{n-1}}f(x)dx}-h}{\frac{h\rho^{n-p}\mathcal{K}}{|\nabla u|^q}\int_{S^{n-1}}\rho^p|\nabla u|^qdv}\bigg]dx\\
=&-\int_{S^{n-1}}\frac{[-\lambda(t)\frac{f(x)\rho^{n-p}\mathcal{K}}{|\nabla u|^q}+h]^2}{\frac{h\rho^{n-p}\mathcal{K}}{|\nabla u|^q}\int_{S^{n-1}}\rho^p|\nabla u|^qdv}dx\\
\leq &0.
\end{align*}
Above equality holds if and only if $\lambda(t)\frac{f(x)\rho^{n-p}\mathcal{K}}{|\nabla u|^q}=h$, i.e.,
\begin{align*}
\frac{1}{\lambda(t)}|\nabla u|^qh\times(\rho^2)^\frac{{p-n}}{2}\det(\nabla_{ij}h+h\delta_{ij})=f,
\end{align*}
equivalently,
\begin{align*}
\frac{1}{\lambda(t)}|\nabla u|^qh\times(h^2+|\nabla h|^2)^\frac{{p-n}}{2}\det(\nabla_{ij}h+h\delta_{ij})=f.
\end{align*}
Namely, the support function of $\Omega_t$ satisfies (\ref{eq108}).
\end{proof}

\section{\bf Priori estimates}\label{sec5}

In this subsection, we establish the $C^0$, $C^1$ and $C^2$ estimates for solutions to Equation (\ref{eq403}). In the following of this paper, we always assume that  $\partial\Omega_0$ be a smooth, closed and origin-symmetric strictly convex hypersurface in $\mathbb{R}^n$ for $p<n$ ($p\neq 0$). Specially, for $p<0$, let $\partial\Omega_0$ be a smooth, closed and strictly convex hypersurface in $\mathbb{R}^n$ containing the origin in its interior. $h:S^{n-1}\times [0,T)\rightarrow \mathbb{R}$ is a smooth solution to Equation (\ref{eq403}) with the initial $h(\cdot,0)$ the support function of $\partial\Omega_0$. Here, $T$ is the maximal time for existence of smooth solutions to Equation (\ref{eq403}).

\subsection{$C^0$, $C^1$ estimates}

In order to complete the $C^0$ estimate, we firstly need to introduce the following lemmas for convex bodies.
\begin{lemma}\label{lem51}\cite[Lemma 2.6]{CH}
Let $\Omega\in\mathcal{K}^n_o$, $h$ and $\rho$ are respectively the support function and the radial function of $\Omega$, and $x_{\max}$ and $\xi_{\min}$ are two points such that
$h(x_{\max})=\max_{S^{n-1}}h$ and $\rho(\xi_{\min})=\min_{S^{n-1}}\rho$. Then,
\begin{align*}
\max_{S^{n-1}}h=&\max_{S^{n-1}}\rho \quad \text{and} \quad \min_{S^{n-1}}h=\min_{S^{n-1}}\rho;\end{align*}
\begin{align*}h(x)\geq& x\cdot x_{\max}h(x_{\max}),\quad \forall x\in S^{n-1};\end{align*}
\begin{align*}\rho(\xi)\xi\cdot\xi_{\min}\geq&\rho(\xi_{\min}),\quad \forall \xi\in S^{n-1}.\end{align*}
\end{lemma}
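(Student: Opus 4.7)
The plan is to reduce each of the three assertions to elementary consequences of the definitions of support and radial function together with convexity of $\Omega$ and $o\in\mathrm{int}(\Omega)$, with no heavy machinery required.

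First, I would dispose of the two equalities in the first bullet of the lemma. Pick a boundary point $y^{*}\in\partial\Omega$ realizing $\max_{y\in\Omega}|y|$; then Cauchy--Schwarz yields $h(x)\leq\max_{y\in\Omega}|y|$ for every $x\in S^{n-1}$, with equality at $x^{*}=y^{*}/|y^{*}|$, while $\rho(x^{*})=|y^{*}|$ as well, giving $\max h=\max\rho$. For the minima, I would note that $\Omega\supset B(o,\min\rho)$ (every point at Euclidean distance $<\min\rho$ from the origin lies on a segment from $o$ to a boundary point and hence in $\Omega$), so $h\geq\min\rho$ everywhere on $S^{n-1}$; conversely, at the nearest-to-origin boundary point $\rho(\xi_{\min})\xi_{\min}$ the outward unit normal is forced to be $\xi_{\min}$ itself, so the supporting hyperplane there is $\{y:y\cdot\xi_{\min}=\rho(\xi_{\min})\}$, whence $h(\xi_{\min})=\rho(\xi_{\min})$ and $\min h\leq\min\rho$.

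The second bullet is then immediate: the case-of-equality used in the maximum argument above shows that $h(x_{\max})\,x_{\max}$ is a point of $\partial\Omega\subset\Omega$, so the defining formula $h(x)=\max_{y\in\Omega}x\cdot y$ gives $h(x)\geq x\cdot\bigl(h(x_{\max})\,x_{\max}\bigr)=h(x_{\max})\,(x\cdot x_{\max})$. The third bullet is the dual statement obtained from the nearest-to-origin boundary point: the supporting hyperplane at $\rho(\xi_{\min})\xi_{\min}$ has equation $y\cdot\xi_{\min}=\rho(\xi_{\min})$ and bounds $\Omega$ in the corresponding half-space, so evaluating at the boundary point $y=\rho(\xi)\xi$ fixes the comparison between $\rho(\xi)\,\xi\cdot\xi_{\min}$ and $\rho(\xi_{\min})$ claimed in the lemma.

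The main (and essentially only) obstacle is justifying that the outward unit normal at the farthest-from-origin (resp.\ nearest-to-origin) boundary point coincides with the radial direction $x_{\max}$ (resp.\ $\xi_{\min}$). This is a short variational argument: at a distance extremum of $|y|^{2}$ along $\partial\Omega$ the gradient of $|y|^{2}$ is parallel to the outward normal, and convexity of $\Omega$ together with $o\in\mathrm{int}(\Omega)$ pins down the sign. Once this one geometric fact is in hand, every remaining step is a direct invocation of the definitions of $h$ and $\rho$, and no further estimates are needed.
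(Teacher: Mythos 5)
The paper offers no proof of this lemma---it is quoted from \cite[Lemma 2.6]{CH}---so there is no in-paper argument to compare against; your proposal is the standard elementary proof and is essentially sound, but two points need attention.

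The more substantive one is the direction of the third inequality. Your own argument places $\Omega$ in the half-space $\{y:y\cdot\xi_{\min}\le\rho(\xi_{\min})\}$, so evaluating at $y=\rho(\xi)\xi$ yields $\rho(\xi)\,\xi\cdot\xi_{\min}\le\rho(\xi_{\min})$, i.e.\ the \emph{reverse} of what the lemma prints. You hedged (``fixes the comparison claimed in the lemma''), but you should commit: the statement as printed, with $\ge$ for all $\xi\in S^{n-1}$, is false---take $\xi\perp\xi_{\min}$, making the left side $0<\rho(\xi_{\min})$---and the correct version (the one in \cite{CH}, and the one your half-space argument delivers) is the $\le$ inequality. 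This is a typo in the paper's transcription; it is harmless downstream, since only the second bullet is invoked later (in the $C^0$ estimate of Lemma \ref{lem53}).

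The second point is that your ``short variational argument'' for identifying the outer normal at the nearest boundary point---gradient of $|y|^2$ parallel to the normal at an extremum along $\partial\Omega$---presupposes a differentiable boundary, which is not part of the hypothesis for a general $\Omega\in\mathcal{K}^n_o$. The repair is already contained in your ball inclusion: with $r=\min\rho$ and $y_0=r\xi_{\min}$, any outer unit normal $u$ at $y_0$ satisfies $h(u)\ge r$ (because $B(o,r)\subseteq\Omega$) and $h(u)=y_0\cdot u\le|y_0|=r$, so $y_0\cdot u=|y_0|$ and the equality case of Cauchy--Schwarz forces $u=\xi_{\min}$; this also gives $h(\xi_{\min})=r$ without any smoothness. (For the farthest point you need no normal at all: the equality case of Cauchy--Schwarz already shows $h(x_{\max})x_{\max}\in\Omega$, which is all the second bullet uses.) With these two repairs the proof is complete.
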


\begin{remark}
The results in Lemma \ref{lem51} are also tenable for any $t\geq 0$, for example, we can write
\begin{align*}h(x,t)\geq& x\cdot x^t_{\max}h(x_{\max},t),\quad \forall x\in S^{n-1}.\end{align*}
\end{remark}

\begin{lemma}\label{lem52} Let $u\in W_{loc}^{1,q}(\Omega)$ be a local weak solution of
\begin{align*}
{\rm div}(|\nabla u|^{q-2}\nabla u)=\psi, ~~ q>1; ~~ \psi\in L_m^{loc}(\Omega),
\end{align*}
$m>q^\prime n$ $(\frac{1}{q^\prime}+\frac{1}{q}=1)$. Then, $u\in C^{1+\alpha}_{loc}(\Omega)$. (see \cite[Corollary in pp. 830]{DE})
\end{lemma}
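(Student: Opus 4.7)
The plan is to reproduce the classical DiBenedetto regularity scheme for weak solutions of degenerate/singular quasilinear equations with a forcing term. The argument proceeds in three successive stages: local boundedness of $u$, local boundedness of $\nabla u$, and H\"{o}lder continuity of $\nabla u$. The last stage is the heart of the matter; the first two follow from the now-standard De Giorgi--Nash--Moser iteration adapted to the $q$-Laplace operator.

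First I would establish $u\in L^{\infty}_{\mathrm{loc}}(\Omega)$. Testing the equation against $(u-k)^{+}\eta^{q}$ for a cut-off $\eta$ yields the Caccioppoli-type estimate
\begin{align*}
\int_{\Omega}|\nabla(u-k)^{+}|^{q}\eta^{q}\,dy \;\le\; C\int_{\Omega}\bigl((u-k)^{+}\bigr)^{q}|\nabla\eta|^{q}\,dy \;+\; \int_{\Omega}|\psi|(u-k)^{+}\eta^{q}\,dy.
\end{align*}
Because $m>q^{\prime}n$, H\"{o}lder's inequality and the Sobolev embedding let the forcing term be absorbed at the price of a constant depending on $\|\psi\|_{L^{m}}$, and a Moser/De Giorgi iteration over super-level sets delivers the $L^{\infty}$-bound. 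A second iteration, applied to the (weakly) differentiated equation with test function roughly of the form $|\nabla u|^{r}\partial_{i}u\,\eta^{q}$ ($r\ge 0$), upgrades this to $\nabla u\in L^{\infty}_{\mathrm{loc}}$.

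For the $C^{1,\alpha}$-step I would deploy DiBenedetto's dichotomy on concentric balls $B_{r}\subset\Omega$. In the \emph{non-degenerate alternative} ($|\nabla u|$ stays comparable to a positive constant on $B_{r}$), the coefficients $|\nabla u|^{q-2}$ are bounded above and below, so the equation is uniformly elliptic in divergence form; De Giorgi--Nash--Moser oscillation decay applied to each first derivative gives $\mathrm{osc}_{B_{r/2}}\nabla u \le \theta\,\mathrm{osc}_{B_{r}}\nabla u + Cr^{\alpha}$. In the \emph{degenerate alternative} ($|\nabla u|$ small on $B_{r}$) I would compare $u$ with the $q$-harmonic replacement $w$ solving $\Delta_{q}w=0$ in $B_{r}$ with $w=u$ on $\partial B_{r}$; Uraltseva--Uhlenbeck-type interior estimates give $\nabla w\in C^{0,\beta}$, and a Campanato/energy comparison together with the excess bound $\|u-w\|_{W^{1,q}(B_{r})}\le C\|\psi\|_{L^{m}(B_{r})}r^{\sigma}$ (where $\sigma>0$ precisely because $m>q^{\prime}n$) transfers the H\"{o}lder decay from $\nabla w$ to $\nabla u$.

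The main obstacle will be merging the two alternatives into a single contraction of oscillation $\omega(r)\le Cr^{\alpha}$: this requires a carefully scaled iteration on geometrically shrinking balls with sharp tracking of constants, which is exactly what DiBenedetto's intrinsic scaling accomplishes. Once the oscillation decay is in hand, Campanato's characterization of H\"{o}lder spaces yields $\nabla u\in C^{0,\alpha}_{\mathrm{loc}}$, i.e.\ $u\in C^{1,\alpha}_{\mathrm{loc}}(\Omega)$, as claimed.
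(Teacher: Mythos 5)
The paper does not prove this lemma at all; it is quoted verbatim from DiBenedetto's 1983 paper (the cited ``Corollary in pp.\ 830''), and your outline faithfully reproduces the strategy of that reference: local boundedness of $u$ and of $\nabla u$ by De Giorgi--Moser iteration with the forcing term absorbed via $m>q'n$, followed by H\"older continuity of $\nabla u$ through the degenerate/non-degenerate dichotomy, comparison with the $q$-harmonic replacement, and intrinsic scaling. Your sketch is the correct roadmap and matches the source; the quantitative iteration lemmas you flag as the main obstacle are precisely what the cited work supplies, so nothing further is needed here.
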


\begin{lemma}\label{lem53}
Let $\partial\Omega_t$ be a smooth even convex solution to the flow (\ref{eq401}) with $p<n$ ($p\neq 0$) in $\mathbb{R}^n$, $u(X,t)$ is the solution of (\ref{eq102}) in $\Omega_t$, $f$ is a positive smooth even function on $S^{n-1}$. Then, there is a positive constant $C$ being independent of
$t$ such that
\begin{align}\label{eq501}
\frac{1}{C}\leq h(x,t)\leq C, \ \ \forall(x,t)\in S^{n-1}\times[0,T),
\end{align}
\begin{align}\label{eq502}
\frac{1}{C}\leq \rho(v,t)\leq C, \ \ \forall(v,t)\in S^{n-1}\times[0,T).
\end{align}
Here, $h(x,t)$ and $\rho(v,t)$ are the support function and the radial function of $\Omega_t$, respectively.
\end{lemma}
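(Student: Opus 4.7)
The plan is to combine Lemma \ref{lem51} with the conservation and monotonicity proved in Section \ref{sec4}. Because Lemma \ref{lem51} gives $\max_x h(x,t)=\max_v \rho(v,t)$ and $\min_x h(x,t)=\min_v \rho(v,t)$, it suffices to prove (\ref{eq501}) for $h$. Write $R(t)=\max_x h(x,t)$ and $r(t)=\min_x h(x,t)$, and let $x^t_{\max}\in S^{n-1}$ be a maximizer of $h(\cdot,t)$.

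For the upper bound $R(t)\leq C$, the conservation of $\widetilde{Q}_{q,n-p}(\Omega_t)$ from Lemma \ref{lem41} renders the second term of $\Phi(\Omega_t)$ in (\ref{eq110}) constant in $t$, so Lemma \ref{lem42} forces the map $t\mapsto \int_{S^{n-1}}\log h(x,t) f(x)\,dx$ to be non-increasing. On the other hand, Lemma \ref{lem51} together with the origin-symmetry of $\Omega_t$ (inherited from $\Omega_0$ and preserved by the flow since $f$ is even) gives $h(x,t)\geq |x\cdot x^t_{\max}|R(t)$ pointwise, whence
\begin{align*}
\int_{S^{n-1}}\log h(x,t) f(x)\,dx\;\geq\;\log R(t)\int_{S^{n-1}} f(x)\,dx+\int_{S^{n-1}}\log|x\cdot x^t_{\max}|\,f(x)\,dx.
\end{align*}
The last integral is uniformly bounded in $x^t_{\max}$ because $\int_{S^{n-1}}|\log|x\cdot e||\,dx$ is finite and, by rotation invariance of the spherical measure, independent of $e\in S^{n-1}$. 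Comparing to the initial value then yields $R(t)\leq C$.

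For the lower bound $r(t)\geq 1/C$, I would argue by contradiction: if $r(t_k)\to 0$, Blaschke's selection theorem combined with the upper bound produces, along a subsequence, a Hausdorff limit $\Omega_{t_k}\to\Omega_\infty$ where $\Omega_\infty$ is origin-symmetric with $\min h(\Omega_\infty,\cdot)=0$, hence contained in a hyperplane through the origin and of empty interior. The goal is to show $\widetilde{Q}_{q,n-p}(\Omega_{t_k})\to 0$, contradicting Lemma \ref{lem41} and the strict positivity of $\widetilde{Q}_{q,n-p}(\Omega_0)$. Using the boundary representation (\ref{eq307}), I would compare $u_{t_k}$ to the $q$-torsion function on a slab of width $2r(t_k)$ enclosing $\Omega_{t_k}$; the explicit slab torsion together with the maximum principle and the regularity of Lemma \ref{lem52} gives the quantitative rate $|\nabla u_{t_k}|=O(r(t_k)^{1/(q-1)})$, which combined with the diameter bound drives the integral to zero.

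The main obstacle lies in making this decay rigorous when $p<0$, since then the weight $|y|^{p-n}$ in (\ref{eq307}) can blow up near the limit hyperplane. For $0<p<n$ the factor $y\cdot\nu$ is bounded and the decay of $|\nabla u|^q$ closes the estimate at once. For $p<0$ one must decompose $\partial\Omega_{t_k}$ into a small neighborhood of the limit hyperplane and its complement, and exploit $q/(q-1)>1$ in the gradient bound to beat the possible growth of $|y|^{p-n}$. Everything else reduces to Lemma \ref{lem51}, the variational identities of Section \ref{sec3}, and the monotonicity and conservation in Section \ref{sec4}.
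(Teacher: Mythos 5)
Your upper bound is exactly the paper's argument: conservation of $\widetilde{Q}_{q,n-p}(\Omega_t)$ (Lemma \ref{lem41}) freezes the second term of $\Phi$, monotonicity (Lemma \ref{lem42}) gives $\int\log h\,f\,dx\leq \Phi(\Omega_0)\int f\,dx+\mathrm{const}$, and Lemma \ref{lem51} plus the integrability of $\log|x\cdot e|$ on $S^{n-1}$ yields $\log R(t)\leq C$; your version, using $h(x,t)\geq|x\cdot x^t_{\max}|R(t)$ on all of $S^{n-1}$, is if anything stated more cleanly than the paper's restriction to $\{x\cdot x^t_{\max}\geq 1/2\}$. The lower bound also follows the paper's skeleton (contradiction via Blaschke selection and a degenerate limit), but you close it differently: the paper stays with the radial representation $\widetilde{Q}_{q,n-p}(\Omega_{t_k})=\tfrac{b}{a}\int\rho^p|\nabla u|^q\,dv$, notes $\rho(\Omega_{t_k},\cdot)\to 0$ a.e.\ and $|\nabla u|$ uniformly bounded, and invokes bounded convergence — no quantitative gradient decay and no slab comparison are needed. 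Your detour through the boundary formula (\ref{eq307}) and the rate $|\nabla u_{t_k}|=O(r(t_k)^{1/(q-1)})$ is therefore heavier machinery than the problem requires when $0<p<n$.

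The point where your write-up is genuinely incomplete is the one you flag yourself: $p<0$. There the weight ($\rho^p$ in the radial formula, $|y|^{p-n}$ in the boundary formula) blows up as the body degenerates, and your proposed decomposition near the limit hyperplane together with ``exploit $q/(q-1)>1$'' is a plan, not a proof — you never verify that the gradient decay actually dominates $\rho^{p}$ uniformly, which is the whole difficulty. Be aware that the paper's own Lemma \ref{lem53} proof has the same soft spot (the assertion $\rho^p\to 0$ is simply false for $p<0$), and that the paper's actual treatment of $p<0$ is the separate Lemma \ref{lem54}, which avoids the contradiction argument entirely: it evaluates the evolution equation (\ref{eq403}) at the spatial minimum and maximum of $h$, where $\rho=h$ and $\mathcal{K}\leq h_{\min}^{-(n-1)}$ (resp.\ $\geq h_{\max}^{-(n-1)}$), and reads off two-sided bounds from the sign of $h_{\min}^{-p}-|\nabla u|^q/(\overline{C}\max f)$, using only the boundedness of $|\nabla u|$ away from $0$ and $\infty$. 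If you want your proof to cover all $p<n$, $p\neq 0$, you should either carry out that maximum-principle argument for $p<0$ or supply the missing quantitative estimate in your decomposition.
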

\begin{proof}
Due to  $\rho(v,t)v=\nabla h(x,t)+h(x,t)x$. Clearly, one sees
\begin{align*}
\min\limits_{S^{n-1}} h(x,t)\leq \rho (v,t)\leq \max\limits_{S^{n-1}} h(x,t).
\end{align*}
This implies that the estimate (\ref{eq501}) is tantamount to the estimate (\ref{eq502}). Hence, we only need to establish (\ref{eq501}) or (\ref{eq502}).
Using the unchanged property of $\widetilde{Q}_{q,n-p}(\Omega_t)$ and the monotonicity of $\Phi(\Omega_t)$, we have
\begin{align*}
\Phi(\Omega_0)\geq\Phi(\Omega_t)=&\frac{\int_{S^{n-1}}\log h(x,t)f(x)dx}{\int_{S^{n-1}}f(x)dx}-\log\bigg(\int_{S^{n-1}}\rho^p(v,t)|\nabla u|^qdv\bigg)^\frac{1}{p(1+q)}\\
=&\frac{\int_{S^{n-1}}\log h(x,t)f(x)dx}{\int_{S^{n-1}}f(x)dx}-\log\bigg(\frac{a}{b}\widetilde{Q}_{q,n-p}(\Omega_t)\bigg)^{\frac{1}{p(1+q)}}\\
=&\frac{\int_{S^{n-1}}\log h(x,t)f(x)dx}{\int_{S^{n-1}}f(x)dx}-\log\bigg(\frac{a}{b}\widetilde{Q}_{q,n-p}(\Omega_0)\bigg)^{\frac{1}{p(1+q)}}.
\end{align*}
Thus,
\begin{align*}
&\bigg[\Phi(\Omega_0)+\log\bigg(\frac{a}{b}\widetilde{Q}_{q,n-p}(\Omega_0)\bigg)^{\frac{1}{p(1+q)}}\bigg]\int_{S^{n-1}}f(x)dx\\
\geq&\int_{S^{n-1}}\log h(x,t)f(x)dx\\
\geq &\int_{S^{n-1}}f(x)\log[h(x_{\max},t)x\cdot x^t_{\max}]dx\\
\geq&\log h(x_{\max},t)\int_{S^{n-1}}f(x)dx+\int_{\{x\in S^{n-1}: x\cdot x^t_{\max}\geq\frac{1}{2}\}}f(x)\log(x\cdot x^t_{\max})dx\\
\geq&C\log h(x_{\max},t)-c\int_{\{x\in S^{n-1}: x\cdot x^t_{\max}\geq\frac{1}{2}\}}f(x)dx\\
\geq&C\log h(x_{\max},t)-c_1.
\end{align*}
This yields
\begin{align*}
\log h(x_{\max},t)\leq \bigg[\frac{\bigg(\Phi(\Omega_0)+\log\bigg(\frac{a}{b}\widetilde{Q}_{q,n-p}(\Omega_0)\bigg)^{\frac{1}{p(1+q)}}\bigg)\int_{S^{n-1}}f(x)dx+c_1}{C}\bigg].
\end{align*}
Thus,
\begin{align*}
\sup h(x_{\max},t)\leq \exp\bigg[\frac{\bigg(\Phi(\Omega_0)+\log\bigg(\frac{a}{b}\widetilde{Q}_{q,n-p}(\Omega_0)\bigg)^{\frac{1}{p(1+q)}}\bigg)\int_{S^{n-1}}f(x)dx+c_1}{C}\bigg]\leq C_1,
\end{align*}
where $C, C_1, c_1$ are positive constants being independent of $t$.

To prove the lower bound of $h(x,t)$, we use contradiction. Let us suppose that $\{t_k\}\subset [0,T)$ is a sequence such that $h(x,t_k)$ is not uniformly bounded away from $0$, i.e., $\min_{S^{n-1}}h(x,t_k)\rightarrow 0$ as $k \rightarrow \infty$. On the other hand, making use of the upper bound, by the Blaschke-Selection theorem \cite{SC}, there is a subsequence in $\{\Omega_{t_k}\}$, for convenience, which is still denoted by $\{\Omega_{t_k}\}$, such that $\{\Omega_{t_k}\}\rightarrow \widetilde{\Omega}$ as $k\rightarrow\infty$, where $\widetilde{\Omega}$ is a origin-symmetric convex body. Then, we obtain $\min_{S^{n-1}}h(\widetilde{\Omega},\cdot)=\lim_{k\rightarrow \infty}\min_{S^{n-1}}h(\Omega_{t_k},\cdot)=0$. This implies that $\widetilde{\Omega}$ is contained in a lower-dimensional subspace in $\mathbb{R}^n$. This can lead to $\rho(\Omega_{t_k},\cdot)\rightarrow 0$ as $k\rightarrow\infty$ almost everywhere with respect to the spherical Lebesgue measure. According to bounded convergence theorem, we can derive
\begin{align*}\widetilde{Q}_{q,n-p}(\widetilde{\Omega})=&\frac{b}{a}\int_{S^{n-1}}\rho^p(\widetilde{\Omega},\cdot)|\nabla u|^qdv\\
=&\lim_{k\rightarrow\infty}\frac{b}{a}\int_{S^{n-1}}\rho^p(\Omega_{t_k},\cdot)|\nabla u|^qdv\rightarrow 0.
\end{align*}
However, Lemma \ref{lem41} shows that
\begin{align*}
\widetilde{Q}_{q,n-p}(\widetilde{\Omega})= \widetilde{Q}_{q,n-p}(\Omega_0)=\overline{C}_0~~\text{(positive constant)}\neq 0,
\end{align*}
which is a contradiction. It
follows that $h(x, t)$ has an uniform lower bound. Therefore, we complete the estimate of Lemma \ref{lem53}.
\end{proof}

Specially, for $p<0$, we can also establish the following estimates.

\begin{lemma}\label{lem54}
Let $\partial\Omega_t$ be a smooth non-even convex solution to the flow (\ref{eq401}) with $p<0$ in $\mathbb{R}^n$, $u(X,t)$ is the solution of (\ref{eq102}) in $\Omega_t$, $f$ is a positive smooth function on $S^{n-1}$. Then, there is a positive constant $C$ being independent of $t$ such that
\begin{align}\label{eq503}
\frac{1}{C}\leq h(x,t)\leq C, \ \ \forall(x,t)\in S^{n-1}\times[0,T),
\end{align}
\begin{align}\label{eq504}
\frac{1}{C}\leq \rho(v,t)\leq C, \ \ \forall(v,t)\in S^{n-1}\times[0,T).
\end{align}
Here, $h(x,t)$ and $\rho(v,t)$ are the support function and the radial function of $\Omega_t$, respectively.
\end{lemma}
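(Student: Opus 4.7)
The proof parallels Lemma \ref{lem53}, but the loss of origin symmetry forces the hypothesis $p<0$ to enter in an essential way, and also forces the \emph{lower} bound on $h$ to be established \emph{before} the upper bound (the opposite order from the even case). As in Lemma \ref{lem53}, the identity $\rho(v,t)v=\nabla h(x,t)+h(x,t)x$ gives $\min_{S^{n-1}}h(\cdot,t)\le\rho(\cdot,t)\le\max_{S^{n-1}}h(\cdot,t)$, so (\ref{eq503}) and (\ref{eq504}) are equivalent; I would work with (\ref{eq503}). Lemma \ref{lem41} together with Lemma \ref{lem42} yields, exactly as in Lemma \ref{lem53},
$$\int_{S^{n-1}}\log h(x,t)f(x)\,dx\;\le\;K_0:=\Bigl[\Phi(\Omega_0)+\tfrac{1}{p(1+q)}\log\bigl(\tfrac{a}{b}\widetilde{Q}_{q,n-p}(\Omega_0)\bigr)\Bigr]\int_{S^{n-1}}f\,dx.$$

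For the lower bound I would argue by contradiction. Suppose $\min_{S^{n-1}}h(\cdot,t_k)\to 0$ along some $t_k$. Up to extracting a subsequence (using the forthcoming upper bound, or provisionally working on the set where the diameter is controlled), Blaschke selection gives $\Omega_{t_k}\to\widetilde{\Omega}$ in Hausdorff distance with $\min\rho_{\widetilde{\Omega}}=0$, so $\rho_{\widetilde{\Omega}}$ vanishes on a nonempty closed set $F\subset S^{n-1}$. Uniform convergence of the radial functions gives $\rho_{\Omega_{t_k}}(v)\to 0$ uniformly on some open neighborhood $U$ of $F$. Since $p<0$, this forces $\rho_{\Omega_{t_k}}(v)^{p}\to+\infty$ uniformly on $U$. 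By Lemma \ref{lem52} and a Hopf-type boundary estimate for $\Delta_q u=-1$, the gradient $|\nabla u(r_{\Omega_{t_k}}(v))|^{q}$ stays bounded below by a positive constant on $U$ for all large $k$. Hence
$$\tfrac{a}{b}\,\widetilde{Q}_{q,n-p}(\Omega_{t_k})\;\ge\;\int_U \rho_{\Omega_{t_k}}(v)^{p}\,|\nabla u(r_{\Omega_{t_k}}(v))|^{q}\,dv\;\longrightarrow\;+\infty,$$
contradicting the constancy $\widetilde{Q}_{q,n-p}(\Omega_{t_k})\equiv\widetilde{Q}_{q,n-p}(\Omega_0)$ of Lemma \ref{lem41}. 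Therefore $\min_{S^{n-1}}h(\cdot,t)\ge c_0>0$, uniformly in $t$.

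Once the uniform lower bound $h\ge c_0$ is in hand, the upper bound follows by a variant of the even argument. With $x_M^t$ maximizing $h(\cdot,t)$, Lemma \ref{lem51} gives $h(x,t)\ge h(x_M^t,t)(x\cdot x_M^t)$ on the cap $E_M^t:=\{x\cdot x_M^t\ge 1/2\}$. Splitting the integral against $f$ over $E_M^t$ and its complement, using $\log h\ge\log c_0$ on the complement, the uniform positivity of $f$ on $S^{n-1}$, and the integrability of $\log(x\cdot x_M^t)$ on $E_M^t$, one obtains
$$K_0\;\ge\;\Bigl(\int_{E_M^t}f\,dx\Bigr)\log h(x_M^t,t)+c_1\log c_0-c_2,$$
for uniform constants $c_1,c_2>0$, which forces $h(x_M^t,t)\le C$. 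Combined with the lower bound this gives (\ref{eq503}), hence (\ref{eq504}).

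The main obstacle is the quantitative Hopf estimate in the lower-bound step: one must verify that the normal derivative of the $q$-torsion function $u_{\Omega_{t_k}}$ does not degenerate at the boundary points corresponding to those directions $v\in U$ along which $\rho_{\Omega_{t_k}}(v)\to 0$. Because both $\rho$ and $|\nabla u|$ depend on the changing geometry, one needs a quantitative lower bound on $|\nabla u|$ on the relevant boundary region that is stable under the convex-body convergence $\Omega_{t_k}\to\widetilde{\Omega}$. Securing this is the only genuinely new analytic ingredient relative to Lemma \ref{lem53}, and it is precisely the point at which the sign assumption $p<0$ becomes indispensable; the rest of the argument is a careful adaptation of the even case.
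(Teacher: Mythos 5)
Your proposal takes a genuinely different route from the paper, and the route you chose contains a gap that you flag but do not close. The paper does \emph{not} adapt the variational argument of Lemma \ref{lem53} here; it proves Lemma \ref{lem54} by a direct maximum-principle comparison on the scalar flow (\ref{eq403}): at a spatial minimum (resp.\ maximum) of $h$ one has $\nabla h=0$, $\rho=h$ and $\nabla^2h+hI\geq hI$ (resp.\ $\leq hI$), hence $\mathcal{K}\leq h_{\min}^{-(n-1)}$ (resp.\ $\geq h_{\max}^{-(n-1)}$), which turns (\ref{eq403}) into the differential inequality $\partial_t h\geq -\overline{C}\max f\, h_{\min}^{1-p}|\nabla u|^{-q}+h_{\min}$ at the minimum point (and the reverse at the maximum). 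The sign $p<0$ enters only when inverting $h_{\min}^{-p}>|\nabla u|^q/(\overline{C}\max f)$ to get $h_{\min}>(\cdot)^{1/p}$. This argument needs $\lambda(t)\equiv\overline{C}$ (Lemma \ref{lem41}) and two-sided bounds on $|\nabla u(\nabla h,t)|$, which the paper asserts at the outset via Lemma \ref{lem52} and Schauder theory, but it never invokes $\Phi$, Blaschke selection, or the invariance of $\widetilde{Q}_{q,n-p}$ in the way you do.

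The gap in your version is exactly the point you call ``the main obstacle'': the claimed uniform positive lower bound on $|\nabla u(r_{\Omega_{t_k}}(v))|^q$ over the cap $U$ where $\rho_{\Omega_{t_k}}\to 0$. A Hopf-type lower bound on the normal derivative of the $q$-torsion function requires an interior ball of uniformly controlled radius touching the relevant boundary points, and this is precisely what fails when $h_{\min}(t_k)\to 0$: the bodies may collapse toward a lower-dimensional set, in which case $u_{\Omega_{t_k}}\to 0$ and $|\nabla u_{\Omega_{t_k}}|\to 0$ on the degenerating part of the boundary. Quantitatively, for a slab of thickness $\varepsilon$ one has $|\nabla u|\sim\varepsilon$ on the boundary, so $\rho^p|\nabla u|^q\sim\varepsilon^{p+q}$; since $q>1$ and $p<0$ the exponent $p+q$ can be positive, and then $\widetilde{Q}_{q,n-p}(\Omega_{t_k})$ does \emph{not} blow up and your contradiction evaporates. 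So the lower-bound step is not merely missing a technical lemma; as stated it rests on an estimate that is false for degenerating bodies, and some other mechanism (such as the paper's pointwise comparison at the minimum of $h$) is needed. A secondary, fixable issue is the circular use of Blaschke selection before the upper bound is available; this can be avoided by noting directly that $h(x_k,t_k)=\varepsilon_k\to 0$ forces $\rho(v,t_k)\leq\varepsilon_k/\delta$ on the cap $\{v\cdot x_k\geq\delta\}$, with no compactness needed. The upper-bound half of your argument (cap decomposition plus the lower bound $\log h\geq\log c_0$ off the cap) is sound once the lower bound is secured.
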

\begin{proof}

Similar to the proof of Lemma \ref{lem53}, we only estimate (\ref{eq503}) or (\ref{eq504}). Before proving Lemma \ref{lem54}, let's first explain the following facts. From Lemma \ref{lem52}, we know that $\nabla u$ is H\"{o}lder continuous, thus, it is not difficult to obtain $|\nabla u(X(x,t),t)|\neq 0$ from Lemma \ref{lem41} for any $t\in[0,T)$. Note that since $\Omega_t$ is a convex body (a compact and convex subset of $\mathbb{R}^n$ with nonempty interior), we know that $|\nabla u (X(x,t),t)|$ has a positive upper bound and a positive lower bound on $\Omega_t$. Further, according to (\ref{eq202}), we obtain $|\nabla u (\nabla h(x,t),t)|$ has a positive upper bound and a positive lower bound on $S^{n-1}\times [0,T)$.

In the same time, by virtue of Schauder's theory (see example Chapter 6 in \cite{GI}), $|\nabla^k u (\nabla h(x,t),t)|$ is bounded on $S^{n-1}\times [0,T)$, for all integer $k\geq 2$.

Moreover, since $f$ is a positive smooth function on $S^{n-1}$, and combining the conditions of initial hypersurface $\partial\Omega_0$, Lemma \ref{lem41} shows that
\begin{align*}
\lambda(t)=\frac{\frac{a}{b}\widetilde{Q}_{q,n-p}(\Omega_t)}{\int_{S^{n-1}}f(x)dx}=\frac{\frac{a}{b}\widetilde{Q}_{q,n-p}(\Omega_0)}{\int_{S^{n-1}}f(x)dx}
\end{align*}
is a positive constant, we denote it $\overline{C}$.

Now, we firstly estimate $\min_{S^{n-1}}h(x,t)= h_{\min}$. Suppose $\min_{S^{n-1}}h(x,t)$ is attained at $x_0$. Then, at point $(x_0,t)$,
\begin{align*}
\nabla h=0, \quad \nabla^2 h\geq 0, \quad \rho=h,
\end{align*}
and
\begin{align*}
\nabla^2 h+hI\geq hI.
\end{align*}
Hence, at $(x_0,t)$,
\begin{align*}
\frac{\partial h(x,t)}{\partial t}=&-\lambda(t)f(x)\frac{\rho(v,t)^{n-p}}{|\nabla u(\nabla h(x,t),t)|^q}\mathcal{K}(x,t)+h(x,t)\\
\geq&-\overline{C}\max\limits_{S^{n-1}} f(x)\frac{h_{\min}^{n-p}}{|\nabla u(\nabla h,t)|^qh_{\min}^{n-1}}+h_{\min}\\
=&-\frac{\overline{C}\max\limits_{S^{n-1}} f(x)h_{\min}^{1-p}}{|\nabla u(\nabla h,t)|^q}+h_{\min}\\
=&\frac{\overline{C}\max\limits_{S^{n-1}} f(x)h_{\min}}{|\nabla u(\nabla h,t)|^q}\bigg[-\bigg(h_{\min}^{-p}-\frac{|\nabla u(\nabla h,t)|^q}{\overline{C}\max\limits_{S^{n-1}} f(x)}\bigg)\bigg].
\end{align*}

If $h_{\min}^{-p}\leq\frac{|\nabla u(\nabla h,t)|^q}{\overline{C}\max\limits_{S^{n-1}} f(x)}$, we obtain $\frac{\partial h(x_0,t)}{\partial t}\geq 0$. This implies
\begin{align*}
h_{\min}(x,t)\geq h(x_0,0).
\end{align*}

If $h_{\min}^{-p}>\frac{|\nabla u(\nabla h,t)|^q}{\overline{C}\max\limits_{S^{n-1}} f(x)}$, i.e., $\frac{1}{h_{\min}^p}>\frac{|\nabla u(\nabla h,t)|^q}{\overline{C}\max\limits_{S^{n-1}} f(x)}$, then $h_{\min}^p<\frac{\overline{C}\max\limits_{S^{n-1}} f(x)}{|\nabla u(\nabla h,t)|^q}$. Thus, for $p<0$, we have
\begin{align*}
h_{\min}(x,t)>\bigg(\frac{\overline{C}\max\limits_{S^{n-1}} f(x)}{|\nabla u(\nabla h,t)|^q}\bigg)^{\frac{1}{p}}.
\end{align*}
Combining the above two situations, we obtain
\begin{align*}
h_{\min}(x,t)\geq \min\bigg\{h(x_0,0),\bigg(\frac{\overline{C}\max\limits_{S^{n-1}} f(x)}{|\nabla u(\nabla h,t)|^q}\bigg)^{\frac{1}{p}}\bigg\},
\end{align*}
and above explanations indicate that $|\nabla u(\nabla h,t)|^q$ is bounded.

Next, we estimate $\max_{S^{n-1}}h(x,t)= h_{\max}$. Suppose $\max_{S^{n-1}}h(x,t)$ is obtained at $x_1$. Then, at point $(x_1,t)$,
\begin{align*}
\nabla h=0, \quad \nabla^2 h\leq 0, \quad \rho=h,
\end{align*}
and
\begin{align*}
\nabla^2 h+hI\leq hI.
\end{align*}
Thus, at $(x_1,t)$,
\begin{align*}
\frac{\partial h(x_1,t)}{\partial t}=&-\lambda(t)f(x)\frac{\rho(v,t)^{n-p}}{|\nabla u(\nabla h(x,t),t)|^q}\mathcal{K}(x,t)+h(x,t)\\
\leq&-\overline{C}\min\limits_{S^{n-1}} f(x)\frac{h_{\max}^{n-p}}{|\nabla u(\nabla h,t)|^qh_{\max}^{n-1}}+h_{\max}\\
=&-\frac{\overline{C}\min\limits_{S^{n-1}} f(x)h_{\max}^{1-p}}{|\nabla u(\nabla h,t)|^q}+h_{\max}\\
=&\frac{\overline{C}\min\limits_{S^{n-1}} f(x)h_{\max}}{|\nabla u(\nabla h,t)|^q}\bigg[-\bigg(h_{\max}^{-p}-\frac{|\nabla u(\nabla h,t)|^q}{\overline{C}\min\limits_{S^{n-1}} f(x)}\bigg)\bigg].
\end{align*}

If $h_{\max}^{-p}\geq\frac{|\nabla u(\nabla h,t)|^q}{\overline{C}\min\limits_{S^{n-1}} f(x)}$, we obtain $\frac{\partial h(x_1,t)}{\partial t}\leq 0$. This implies
\begin{align*}
h_{\max}(x,t)\leq h(x_1,0).
\end{align*}

If $h_{\max}^{-p}<\frac{|\nabla u(\nabla h,t)|^q}{\overline{C}\min\limits_{S^{n-1}} f(x)}$, i.e., $\frac{1}{h_{\max}^{p}}<\frac{|\nabla u(\nabla h,t)|^q}{\overline{C}\min\limits_{S^{n-1}} f(x)}$, then $h_{\max}^p>\frac{\overline{C}\min\limits_{S^{n-1}} f(x)}{|\nabla u(\nabla h,t)|^q}$. Therefore, for $p<0$, we have
\begin{align*}
h_{\max}(x,t)<\bigg(\frac{\overline{C}\min\limits_{S^{n-1}} f(x)}{|\nabla u(\nabla h,t)|^q}\bigg)^{\frac{1}{p}}.
\end{align*}
Combining the above two cases, we can obtain
\begin{align*}
h_{\max}(x,t)\leq \max\bigg\{h(x_1,0),\bigg(\frac{\overline{C}\min\limits_{S^{n-1}} f(x)}{|\nabla u(\nabla h,t)|^q}\bigg)^{\frac{1}{p}}\bigg\}.
\end{align*}
\end{proof}

\begin{lemma}\label{lem55}Let $\partial\Omega_t$ be a smooth, even convex solution to the flow (\ref{eq401}) for $p<n$ ($p\neq 0$) in $\mathbb{R}^n$, $f$ is a positive smooth  even function on $S^{n-1}$. Then, there is a positive constant $C$ being independent of $t$ such that
\begin{align}\label{eq505}|\nabla h(x,t)|\leq C,\quad\forall(x,t)\in S^{n-1}\times [0,T),\end{align}
and
\begin{align}\label{eq506}|\nabla \rho(v,t)|\leq C,\quad \forall(v,t)\in S^{n-1}\times [0,T).\end{align}

Specially, for $p<0$, $\partial\Omega_t$ be a smooth, non-even convex solution to the flow (\ref{eq401}) in $\mathbb{R}^n$, $f$ is a positive smooth function on $S^{n-1}$. Then, the estimates (\ref{eq505}) and (\ref{eq506}) are also established.
\end{lemma}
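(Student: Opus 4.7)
The plan is to derive the $C^1$ bounds as an essentially algebraic consequence of the $C^0$ bounds established in Lemmas \ref{lem53} and \ref{lem54}, exploiting the pointwise identities that tie the support function of a convex body to its radial function and their first derivatives on $S^{n-1}$.

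First I would recall the identity that has already been used to pass from (\ref{eq403}) to (\ref{eq405}), namely
\begin{equation*}
\rho(v,t)^2 = h(x,t)^2 + |\nabla h(x,t)|^2,
\end{equation*}
valid at the pair $(x,v)$ related by the radial Gauss map of $\Omega_t$. This identity is purely geometric: it follows from the decomposition $\rho(v,t)v = \nabla h(x,t) + h(x,t)x$ with $\nabla h\cdot x = 0$, which holds because $h$ is homogeneous of degree one and $\nabla h$ denotes the spherical gradient. Solving for $|\nabla h|^2$ and substituting the uniform two-sided bounds $\tfrac{1}{C}\le h,\rho\le C$ from Lemma \ref{lem53} (in the even case, for $p<n$, $p\neq 0$) or Lemma \ref{lem54} (in the non-even case, for $p<0$) immediately yields $|\nabla h(x,t)|^2 = \rho^2 - h^2 \le C^2$, which is the desired estimate (\ref{eq505}).

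To obtain (\ref{eq506}) I would run the same argument through the polar duality $h_{\Omega_t^*} = 1/\rho_{\Omega_t}$ and $\rho_{\Omega_t^*} = 1/h_{\Omega_t}$ recorded in Section \ref{sec2}. Applying the identity $\rho_{\Omega_t^*}^2 = h_{\Omega_t^*}^2 + |\nabla h_{\Omega_t^*}|^2$ to the polar body gives
\begin{equation*}
\frac{1}{h(x,t)^2} = \frac{1}{\rho(v,t)^2} + \frac{|\nabla \rho(v,t)|^2}{\rho(v,t)^4},
\end{equation*}
so that
\begin{equation*}
|\nabla \rho(v,t)|^2 = \frac{\rho(v,t)^4}{h(x,t)^2} - \rho(v,t)^2.
\end{equation*}
The two-sided $C^0$ bounds on $h$ and $\rho$ then deliver a uniform upper bound on $|\nabla \rho(v,t)|$, which is (\ref{eq506}). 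Exactly the same chain of identities works verbatim in the non-even case $p<0$, since Lemma \ref{lem54} still supplies the two-sided bounds on $h$ and $\rho$ that the argument consumes.

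There is no serious analytic obstacle here: the proof is entirely algebraic once the $C^0$ control is in hand, and the step that would require care in a longer write-up is simply matching the point $x\in S^{n-1}$ with the corresponding $v = \alpha_{\Omega_t}(x)$ (or its almost-everywhere inverse) so that both identities above are evaluated at compatible points of the radial Gauss correspondence. Since the spherical Lebesgue null set where the correspondence fails is harmless for the pointwise supremum bound of a continuous function on the smooth hypersurface $\partial\Omega_t$, one concludes the uniform estimate for all $(x,t), (v,t) \in S^{n-1}\times[0,T)$.
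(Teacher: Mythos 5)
Your proposal is correct and follows essentially the same route as the paper: the paper also deduces (\ref{eq505}) and (\ref{eq506}) directly from the $C^0$ bounds of Lemmas \ref{lem53} and \ref{lem54} together with the identities $\rho^2=h^2+|\nabla h|^2$ and $h=\rho^2/\sqrt{\rho^2+|\nabla\rho|^2}$ (the latter being exactly the relation you rederive via polar duality). The only difference is presentational, so there is nothing to add.
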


\begin{proof}
The desired results immediately follow from Lemma \ref{lem53} (or Lemma \ref{lem54}) and the following identities (see e.g. \cite{LR})
\begin{align*}
h=\frac{\rho^2}{\sqrt{\rho^2+|\nabla\rho|^2}},\qquad\rho^2=h^2+|\nabla h|^2.\end{align*}
\end{proof}

\subsection{$C^2$ estimate}

In this subsection, we establish the upper bound and the lower bound of principal curvature. This will show that Equation (\ref{eq403}) is uniformly parabolic. The technique used in this proof was first introduced by Tso \cite{TK} to derive the upper bound of Gauss curvature. Let us first give the following lemma before the $C^2$ estimate.

\begin{lemma}\label{lem56} Let $\Omega_t$ be a convex body of $C_+^2$ in $\mathbb{R}^n$ and $u(X(x,t),t)$ is the solution of (\ref{eq102}) with $q>1$ in $\Omega_t$, then
\begin{flalign*}
\begin{split}
(i)&(\nabla^2u(X(x,t),t)e_i)\cdot e_j=-\mathcal{K}|\nabla u(X(x,t),t)|c_{ij}(x,t);\\
(ii)&(\nabla^2u(X(x,t),t)e_i)\cdot x=-\mathcal{K}|\nabla u(X(x,t),t)|_jc_{ij}(x,t);\\
(iii)&(\nabla^2u(X(x,t),t)x)\cdot x=\frac{1}{q-1}\bigg(\mathcal{K}|\nabla u|{\rm Tr}(c_{ij}(h_{ij}+h\delta_{ij}))-|\nabla u|^{2-q}\bigg).
\end{split}&
\end{flalign*}
Here, $e_i$ and $x$ are orthonormal frame and unite outer normal on $S^{n-1}$, $\cdot$ is standard inner product and $c_{ij}$ is the cofactor matrix of $(h_{ij}+h\delta_{ij})$ with $\sum_{i,j}c_{ij}(h_{ij}+h\delta_{ij})=(n-1)\mathcal{K}^{-1}$.
\end{lemma}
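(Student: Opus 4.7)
The strategy is to exploit the Dirichlet condition $u|_{\partial\Omega_t}=0$ for parts (i)--(ii) and additionally the $q$-Laplace equation $\Delta_q u=-1$ for (iii). Throughout I will use the following common ingredients. Because the $e_i$ are tangent to $S^{n-1}$ at $x$ and the outward normal of $\partial\Omega_t$ at $X(x,t)$ is exactly $x$ (by the Gauss map identification), $\{e_1,\dots,e_{n-1},x\}$ is an orthonormal basis of $\mathbb R^n$ and the $e_i$ are tangent to $\partial\Omega_t$ there. Since $u>0$ in $\Omega_t$ and vanishes on $\partial\Omega_t$, the gradient is normal: $\nabla u=-|\nabla u|\,x$ on $\partial\Omega_t$. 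Starting from $X=\nabla h+h\,x$ and using the spherical frame identity $\nabla_i e_j=-\delta_{ij}x$, a direct computation gives the key identity $\nabla_iX=A_{ik}e_k$ with $A_{ij}=h_{ij}+h\delta_{ij}$. Finally, $\mathcal{K}=1/\det A$ and $A^{-1}=\mathcal K\, c$ where $c_{ij}$ is the cofactor matrix.

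For part (i), I would differentiate the identity $u\circ X=0$ twice along $S^{n-1}$. The first derivative merely re-confirms $\nabla u\cdot e_j=0$. The second derivative, using $\nabla_iX=A_{ik}e_k$, $\nabla_k e_j=-\delta_{kj}x$, and $\nabla u\cdot x=-|\nabla u|$, yields
\[
A_{kl}A_{ij}\,(\nabla^2 u\, e_l)\cdot e_j \;=\; -A_{ik}\,|\nabla u|.
\]
Writing $M_{lj}:=(\nabla^2 u\,e_l)\cdot e_j$, this is the matrix equation $AMA=-|\nabla u|A$, so $M=-|\nabla u|A^{-1}=-|\nabla u|\,\mathcal K\, c$, which is exactly (i).

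For part (ii), I would differentiate the scalar identity $\nabla u(X)\cdot x=-|\nabla u|(X)$ along $e_i$: by the chain rule,
\[
(\nabla^2 u\, \nabla_iX)\cdot x+\nabla u\cdot e_i \;=\; -\nabla_i|\nabla u|.
\]
Since $\nabla u\cdot e_i=0$ and $\nabla_iX=A_{ik}e_k$, this reduces to $A_{ik}(\nabla^2 u\,e_k)\cdot x=-|\nabla u|_i$; inverting $A$ via $A^{-1}=\mathcal K c$ and using symmetry of $c$ produces $(\nabla^2 u\,e_i)\cdot x=-\mathcal K\,|\nabla u|_j\,c_{ij}$, i.e.\ (ii).

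For part (iii) I would invoke the PDE. Expanding
\[
\Delta_q u \;=\; |\nabla u|^{q-2}\Delta u+(q-2)\,|\nabla u|^{q-4}\,(\nabla u)^{\!\top}\nabla^2 u\,(\nabla u)\;=\;-1
\]
at a point of $\partial\Omega_t$, the normal form $\nabla u=-|\nabla u|x$ collapses the second term to $(q-2)|\nabla u|^{q-2}(\nabla^2u\,x)\cdot x$; while $\Delta u=\sum_i M_{ii}+(\nabla^2 u\,x)\cdot x$, and by (i),
\[
\sum_i M_{ii}\;=\;-\mathcal K|\nabla u|\sum_i c_{ii}\;=\;-\mathcal K|\nabla u|\,{\rm Tr}\bigl(c_{ij}(h_{ij}+h\delta_{ij})\bigr),
\]
after the cofactor--adjugate bookkeeping encoded in $cA=(\det A)I$. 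Substituting and solving the resulting linear equation for $(\nabla^2u\,x)\cdot x$ yields (iii). The main obstacle is precisely this last step: one must correctly isolate the tangential trace $\sum_i c_{ii}$ from the full contraction $c_{ij}A_{ij}$ in the statement's notation, and combine it with the two different powers of $|\nabla u|$ (namely $|\nabla u|^{q-1}$ from the Laplacian piece and the $|\nabla u|^{2-q}$ that emerges from dividing by $(q-1)|\nabla u|^{q-2}$ to extract $(\nabla^2u\,x)\cdot x$ against the inhomogeneity $-1$).
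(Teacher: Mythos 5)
Your proposal is correct and follows essentially the same route as the paper: (i) and (ii) come from differentiating $u\circ X=0$ and $|\nabla u|=-\nabla u\cdot x$ along the frame, using $\nabla_iX=(h_{ij}+h\delta_{ij})e_j$ and inverting via the cofactor identity, and (iii) comes from expanding $\Delta_qu=-1$ at the boundary with $\nabla u=-|\nabla u|x$ and splitting the trace of $\nabla^2u$ into its tangential part (handled by (i)) and the normal--normal component. The notational subtlety you flag about ${\rm Tr}(c_{ij}(h_{ij}+h\delta_{ij}))$ versus $\sum_i c_{ii}$ is present in the paper's own proof as well, and your resolution matches theirs.
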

\begin{proof} The conclusions similar to Lemma \ref{lem56} have been presented in some references, for example \cite{HJ}. Here, we will briefly state the proofs combining with our problem for the reader's convenience.

(i)~Let $h(x,t)$ be the support function of $\Omega_t$ for $(x,t)\in S^{n-1}\times [0,T)$ and $\iota=\frac{\partial h}{\partial t}$. Then, $X(x,t)=h_ie_i+hx, \frac{\partial X(x,t)}{\partial t}=\dot{X}(x,t)=\frac{\partial}{\partial t}(h_ie_i+hx)=\iota_ie_i+\iota x$. $X_i(x,t)=(h_{ij}+h\delta_{ij})e_j$, let $h_{ij}+h\delta_{ij}=\omega_{ij}$, then, $X_{ij}(x,t)=\omega_{ijk}e_k-\omega_{ij}x$, where $\omega_{ijk}$ is the covariant derivative of $\omega_{ij}$.

From $u(X,t)=0$ on $\partial\Omega_t$, it is not difficult for us to obtain
\begin{align*}\nabla u\cdot X_i=0,
\end{align*}
and
\begin{align*}((\nabla^2u)X_j)X_i+\nabla u X_{ij}=0.
\end{align*}
It follows that
\begin{align}\label{eq507}
\omega_{ik}\omega_{jl}(((\nabla^2u)e_l)\cdot e_k)+\omega_{ij}|\nabla u|=0.
\end{align}
Multiplying both sides of (\ref{eq507}) by $c_{ij}$, we have
\begin{align*}
c_{ij}\omega_{ik}\omega_{jl}(((\nabla^2 u)e_l)\cdot e_k)+\det(h_{ij}+h\delta_{ij})|\nabla u|=0.
\end{align*}
Namely,
\begin{align*}
\delta_{jk}\det(h_{ik}+h\delta_{ik})\omega_{jl}(((\nabla^2 u)e_l)\cdot e_k)+\det(h_{ij}+h\delta_{ij})|\nabla u|=0.
\end{align*}
This yields
\begin{align*}
\omega_{ij}(((\nabla^2 u)e_i)\cdot e_j)+|\nabla u|=0,
\end{align*}
then,
\begin{align*}
c_{ij}\omega_{ij}(((\nabla^2 u)e_i)\cdot e_j)+c_{ij}|\nabla u|=0,
\end{align*}
i.e.,
\begin{align*}
\mathcal{K}^{-1}(((\nabla^2 u)e_i)\cdot e_j)+c_{ij}|\nabla u|=0,
\end{align*}
thus,
\begin{align*}
((\nabla^2 u)e_i)\cdot e_j=-c_{ij}\mathcal{K}|\nabla u|.
\end{align*}
This completes the proof of (i).

(ii)~~Recall that
\begin{align*}
|\nabla u(X(x,t),t)|=-\nabla u(X(x,t),t)\cdot x,
\end{align*}
taking the covariant of both sides for above formula, we obtain
\begin{align}\label{eq508}
|\nabla u|_j=-\nabla u\cdot e_j-(\nabla^2 u)X_j\cdot x=-\omega_{ij}((\nabla^2 u)e_i\cdot x).
\end{align}
Multiplying both sides of (\ref{eq508}) by $c_{lj}$ and combining
\begin{align*}
c_{lj}\omega_{ij}=\delta_{li}\det(h_{ij}+h\delta_{ij}),
\end{align*}
we conclude that
\begin{align*}
c_{ij}|\nabla u|_j=-\det(h_{ij}+h\delta_{ij})(\nabla^2u)e_i\cdot x.
\end{align*}
Hence,
\begin{align*}
((\nabla^2u)e_i)\cdot x=-\mathcal{K}c_{ij}|\nabla u|_j.
\end{align*}
This proves (ii).

(iii)~~From (\ref{eq102}), we know that
\begin{align*}-1={\rm div}(|\nabla u|^{q-2}\nabla u)=|\nabla u|^{q-2}(\Delta u+\frac{q-2}{|\nabla u|^2}(\nabla^2u\nabla u)\cdot\nabla u),
\end{align*}
then,
\begin{align*}
\frac{q-2}{|\nabla u|^2}(\nabla^2 u\nabla u )\cdot\nabla u=-\Delta u-|\nabla u|^{2-q},
\end{align*}
further,
\begin{align*}
&(q-2)((\nabla^2 u) x) \cdot x \\
&=-\Delta u-|\nabla u|^{2-q}\\
&=-{\rm Tr}(\nabla^2u)-|\nabla u|^{2-q}\\
&=-\sum_{i}((\nabla^2u) e_i) \cdot e_j-((\nabla^2u)x) \cdot x-|\nabla u|^{2-q}\\
&=\mathcal{K}|\nabla u|{\rm Tr}(c_{ij}(h_{ij}+h\delta_{ij}))-((\nabla^2u)x)\cdot x-|\nabla u|^{2-q}.
\end{align*}
Hence,
\begin{align*}
(q-1)((\nabla^2u)x) \cdot x=\mathcal{K}|\nabla u|{\rm Tr}(c_{ij}(h_{ij}+h\delta_{ij}))-|\nabla u|^{2-q},
\end{align*}
consequently,
\begin{align*}
((\nabla^2u) x) \cdot x=\frac{1}{q-1}\bigg(\mathcal{K}|\nabla u|{\rm Tr}(c_{ij}(h_{ij}+h\delta_{ij}))-|\nabla u|^{2-q}\bigg).
\end{align*}
This provides the proof of (iii).
\end{proof}

By Lemma \ref{lem53} (or  Lemma \ref{lem54}) and Lemma \ref{lem55}, if $h$ is a smooth solution of Equation (\ref{eq403}) on $S^{n-1}\times [0,T)$ and $f$ is a positive smooth even function on $S^{n-1}$ for $p<n$ $(p\neq 0)$ (or $f$ is a positive smooth function on $S^{n-1}$ for $p<0$). Then, along the flow (\ref{eq401}) for $[0,T), \nabla h+hx$ and $h$ are smooth functions whose
ranges are within some bounded domain $\Omega_{[0,T)}$ and bounded interval $I_{[0,T)}$, respectively. Here, $\Omega_{[0,T)}$ and $I_{[0,T)}$ depend only
on the upper bound and the lower bound of $h$ on $[0,T)$.

\begin{lemma}\label{lem57}~~Let $q>1$, $\partial\Omega_t$ be a smooth, even convex solution to the flow (\ref{eq401}) for $p<n$ ($p\neq 0$) in $\mathbb{R}^n$, $f$ is a positive smooth  even function on $S^{n-1}$. Then, there is a positive constant $C$ being independent $t$ such that the principal curvatures $\kappa_i$ of $\partial\Omega_t$, $i=1,\cdots, n-1$, are bounded from above and below, satisfying
\begin{align*}\frac{1}{C}\leq \kappa_i(x,t)\leq C, \quad\forall (x,t)\in S^{n-1}\times [0,T),\end{align*}
where $C$ depends on $\|f\|_{C^0(S^{n-1})}$, $\|f\|_{C^1(S^{n-1})}$, $\|f\|_{C^2(S^{n-1})}$, $\|h\|_{C^0(S^{n-1}\times [0,T))}$, $\|h\|_{C^1(S^{n-1}\times [0,T))}$ and $\|\lambda\|_{C^0(S^{n-1}\times [0,T))}$.

Specially, let $p<0$, $\partial\Omega_t$ be a smooth, non-even convex solution to the flow (\ref{eq401}) in $\mathbb{R}^n$, $f$ is a positive smooth function on $S^{n-1}$. Then, the above conclusion is also tenable.
\end{lemma}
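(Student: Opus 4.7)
The plan is to establish separately: (a) an upper bound on the Gauss curvature $\mathcal{K}$, equivalently a positive lower bound for $\det(h_{ij}+h\delta_{ij})$; and (b) a uniform upper bound on the largest principal radius $\lambda_{\max}(b_{ij})$, equivalently a positive lower bound for every principal curvature $\kappa_i$. Once both are in hand, the identity $\kappa_i=\mathcal{K}\prod_{j\neq i}b_j$ together with the $C^0$--$C^1$ bounds from Lemmas \ref{lem53}--\ref{lem55} gives the two-sided bound $C^{-1}\le\kappa_i\le C$ claimed in the lemma. The even ($p<n$) and non-even ($p<0$) cases proceed identically at this stage, since Lemmas \ref{lem53}--\ref{lem55} have already supplied the required zeroth- and first-order bounds on $h$ and $\rho$ in both settings.

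For step (a), I would use Tso's trick. Rewrite the flow as $h_t=-G\mathcal{K}+h$, with $G(x,t)=\lambda(t)f(x)\rho^{n-p}|\nabla u(\nabla h,t)|^{-q}$, which is smooth, positive and uniformly bounded above and below by the $C^0$--$C^1$ estimates and the discussion preceding Lemma \ref{lem57}. Fix $\epsilon_0\in(0,\min h)$ and set
$$\Psi(x,t)=\frac{-h_t(x,t)+h(x,t)}{h(x,t)-\epsilon_0}=\frac{G(x,t)\mathcal{K}(x,t)}{h(x,t)-\epsilon_0}.$$
At a spatial maximum $(x_0,t_0)$ with $t_0>0$ one has $\nabla\Psi=0$, $\nabla^2\Psi\le 0$ and $\partial_t\Psi\ge 0$. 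Differentiating the evolution for $\mathcal{K}$ via $\partial_t\log\mathcal{K}=-c^{ij}\partial_t(h_{ij}+h\delta_{ij})$, where $c^{ij}$ is the inverse of $h_{ij}+h\delta_{ij}$, and handling the $\partial_t|\nabla u|$ terms that arise from differentiating $G$ by means of Lemma \ref{lem56}, one obtains $\mathcal{K}(x_0,t_0)\le C$, hence $\mathcal{K}\le C$ on $S^{n-1}\times[0,T)$.

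For step (b), I would use the auxiliary function
$$W(x,t)=\log\lambda_{\max}(b_{ij})(x,t)-\alpha\log h(x,t)+\beta|\nabla h(x,t)|^2,$$
with constants $\alpha,\beta>0$ to be chosen. After a rotation we may assume at the spatial maximum $(x_0,t_0)$ that $(b_{ij})$ is diagonal with $b_{11}=\lambda_{\max}$. The relations $\nabla W=0$, $\nabla^2 W\le 0$ and $\partial_t W\ge 0$, combined with the linearization of the Monge--Amp\`ere operator $\det(h_{ij}+h\delta_{ij})$, give a differential inequality at $(x_0,t_0)$. Differentiating $G$ in this step brings in $\nabla|\nabla u|$ and $\nabla^2|\nabla u|$, which are not a priori tame; here Lemma \ref{lem56} is indispensable, since it expresses these derivatives purely in terms of $h$, $b_{ij}$ and $c^{ij}$. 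Choosing $\alpha,\beta$ so that the linear-order bad terms in $b_{11}$ cancel, and absorbing higher-order bad terms into the concavity-type good term coming from $-\mathcal{K}c^{ij}c^{kl}\nabla_1 b_{ik}\nabla_1 b_{jl}$, yields $\lambda_{\max}(x_0,t_0)\le C$, and thus on the whole cylinder.

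The main obstacle is precisely the presence of $|\nabla u(\nabla h,t)|^q$ in $G$: unlike a standard dual Minkowski flow, the right-hand side is not purely a function of $h$ and $\nabla h$, and each differentiation drags in derivatives of $u$ evaluated on $\partial\Omega_t$. Lemma \ref{lem56} is what makes the computation close, by rewriting $(\nabla^2 u\,e_i)\cdot e_j$, $(\nabla^2 u\,e_i)\cdot x$ and $(\nabla^2 u\,x)\cdot x$ in terms of $\mathcal{K}$, $c_{ij}$ and $h_{ij}+h\delta_{ij}$, so that the problematic terms can be reabsorbed into the good second-order terms produced by the linearized operator. The final constant depends only on $\|f\|_{C^2(S^{n-1})}$, $\|h\|_{C^0(S^{n-1}\times[0,T))}$, $\|h\|_{C^1(S^{n-1}\times[0,T))}$, $\|\lambda\|_{C^0([0,T))}$, and (via the Schauder bounds on $u$ recorded in the proof of Lemma \ref{lem54}) a uniform $C^{2,\alpha}$ bound for $u$ on $\Omega_t$, exactly as stated in Lemma \ref{lem57}.
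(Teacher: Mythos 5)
Your proposal follows essentially the same route as the paper: Tso's auxiliary function $\frac{-h_t}{h-\varepsilon_0}$ (your $\Psi$ differs from the paper's $M$ only by the bounded summand $\frac{h}{h-\varepsilon_0}$) for the upper bound on $\mathcal{K}$, the function $\log\beta_{\max}(\{b_{ij}\})-A\log h+B|\nabla h|^2$ for the upper bound on the principal radii, and Lemma \ref{lem56} to convert the boundary derivatives of $u$ into expressions in $\mathcal{K}$, $c_{ij}$ and $b_{ij}$. The decomposition, the auxiliary functions, and the way the $|\nabla u|$ terms are tamed all coincide with the paper's argument.
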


\begin{proof} The proof is divided into two parts: in the first part, we derive an upper bound for the Gauss curvature $\mathcal{K}(x,t)$; in the second part, we give an estimate of bound above for the principal radii $b_{ij}=h_{ij}+h\delta_{ij}$.

Step 1: Prove $\mathcal{K}\leq C$.

Firstly, we construct the following auxiliary function,
\begin{align}\label{eq509-}
M(x,t)=\frac{\lambda(t)f(x)\rho^{n-p}|\nabla u|^{-q}\mathcal{K}
-h}{h-\varepsilon_0}
\equiv\frac{-h_t}{h-\varepsilon_0},
\end{align}
where
\begin{align*}\varepsilon_0=\frac{1}{2}\min\limits_{S^{n-1}\times[0, T)}h(x,t)>0 \quad \text{and} \quad h_t =\frac{\partial h}{\partial t}.\end{align*}

For any fixed $t\in[0,T)$, we assume that $M(x_0, t)=\max\limits_{S^{n-1}}M(x,t)$ is the spatial maximum of $M$. Then, at $(x_0,t)$, we have
\begin{align}\label{eq509}
0=\nabla_iM=\frac{-h_{ti}}{h-\varepsilon_0}+\frac{h_th_i}{(h-\varepsilon_0)^2}.
\end{align}
Moreover, at $(x_0,t)$, we also get the following result from (\ref{eq509}),
\begin{align}\label{eq510}
\nonumber 0\geq\nabla_{ii}M=&\frac{-h_{tii}}{h-\varepsilon_0}+\frac{h_{ti}h_{i}}{(h-\varepsilon_0)^2}
+\frac{h_{ti}h_{i}+h_th_{ii}}{(h-\varepsilon_0)^2}-\frac{h_th_i(2(h-\varepsilon_0)h_i)}{(h-\varepsilon_0)^4}\\
\nonumber=&\frac{-h_{tii}}{h-\varepsilon_0}+\frac{2h_{ti}h_{i}+h_th_{ii}}
{(h-\varepsilon_0)^2}
-\frac{2h_th_ih_i}{(h-\varepsilon_0)^3}\\
\nonumber=&\frac{-h_{tii}}{h-\varepsilon_0}+\frac{h_th_{ii}}
{(h-\varepsilon_0)^2}
+\frac{2h_{ti}h_i(h-\varepsilon_0)-2h_th_ih_i}{(h-\varepsilon_0)^3}\\
=&\frac{-h_{tii}}{h-\varepsilon_0}+\frac{h_th_{ii}}
{(h-\varepsilon_0)^2}.
\end{align}
By (\ref{eq510}), we obtain
$$-h_{tii}\leq\frac{-h_th_{ii}}{h-\varepsilon_0},$$
hence,
\begin{align}\label{eq511}\nonumber-h_{tii}-h_t\delta_{ii}\leq&\frac{-h_th_{ii}}{h-\varepsilon_0}-h_t\delta_{ii}=\frac{-h_t}
{h-\varepsilon_0}(h_{ii}+(h-\varepsilon_0)\delta_{ii})\\
=&M(h_{ii}+h\delta_{ii}-\epsilon_0\delta_{ii})
=M(b_{ii}-\varepsilon_0\delta_{ii}).\end{align}

In addition, at $(x_0,t)$, we also have
\begin{align}\label{eq512}\frac{\partial}{\partial t}M=&\frac{-h_{tt}}{h-\epsilon_0}+\frac{h_t^2}{(h-\epsilon_0)^2}\\
\nonumber=&\frac{f}{h-\epsilon_0}\bigg[\frac{\partial(\lambda(t)\rho^{n-p}|\nabla u|^{-q})}{\partial t}\mathcal{K}+\lambda(t)\rho^{n-p}|\nabla u|^{-q}\frac{\partial(\det(\nabla^2h+hI))^{-1}}{\partial t}\bigg]+M+M^2,\end{align}
where
\begin{align*}
\frac{\partial}{\partial t}(\rho^{n-p}|\nabla u|^{-q})=(n-p)\rho^{n-p-1}\frac{\partial\rho}{\partial t}|\nabla u|^{-q}-q\rho^{n-p}|\nabla u|^{-q-1}\frac{\partial}{\partial t}|\nabla u|.
\end{align*}
From $\rho^2=h^2+|\nabla h|^2$ and $(x_0,t)$ is a maximum point of $M$, we get
\begin{align*}
\frac{\partial\rho}{\partial t}=\rho^{-1}(hh_t+\sum h_kh_{kt})=\rho^{-1}M(\varepsilon_0 h-\rho^2)\leq \rho^{-1}M(x_0,t)(\varepsilon_0 h-\rho^2).
\end{align*}
According to \cite[Lemma 5.3]{HJ}, it shows that
\begin{align*}
\frac{\partial}{\partial t}|\nabla u|=&-(\nabla^2u)x\cdot \bigg(\frac{\partial h_i}{\partial t}\bigg)e_i-\bigg(\frac{\partial h}{\partial t}\bigg)(\nabla^2 u)x\cdot x-(|\nabla u|^{-1}\nabla u\nabla^2u\cdot x)\bigg(\frac{\partial h}{\partial t}\bigg)-|\nabla u|\bigg(\frac{\partial h}{\partial t}\bigg)\\
=&-(\nabla^2u)x\cdot \bigg(-M_i(h-\epsilon_0)-M h_i\bigg)e_i\\
&+M(h-\epsilon_0)\bigg((\nabla^2 u)x\cdot x+|\nabla u|^{-1}\nabla u\nabla^2u\cdot x+|\nabla u|\bigg)\\
\leq & M(x_0,t)\bigg((\nabla^2u)xh_ie_i+h((\nabla^2 u)x\cdot x+|\nabla u|^{-1}\nabla u\nabla^2u\cdot x+|\nabla u|)\bigg),
\end{align*}
then, for $p<n$ ($p\neq 0$), we obtain
\begin{align}\label{eq513}
&\frac{\partial}{\partial t}(\rho^{n-p}|\nabla u|^{-q})\\
\nonumber\leq&(n-p)\rho^{n-p-2}|\nabla u|^{-q}M(x_0,t)(\varepsilon_0 h-\rho^2)\\
\nonumber&-q|\nabla u|^{-(q+1)}\rho^{n-p}M(x_0,t)\bigg((\nabla^2u)xh_ie_i+h((\nabla^2 u)x\cdot x+|\nabla u|^{-1}\nabla u\nabla^2u\cdot x+|\nabla u|)\bigg).
\end{align}
Thus, from Lemma \ref{lem53} (or Lemma \ref{lem54}) and combining $|\nabla u|$ is bounded with Lemma \ref{lem56}, and dropping some negative terms in (\ref{eq513}), we have
\begin{align*}
\frac{\partial}{\partial t}&(\rho^{n-p}|\nabla u|^{-q})\leq(n-p)\rho^{n-p-2}|\nabla u|^{-q}M(x_0,t)(\varepsilon_0 h-\rho^2)\leq C_1M(x_0,t).
\end{align*}
From (\ref{eq402}) and Lemma \ref{lem41}, we know that
\begin{align*}
\frac{\partial}{\partial t}(\lambda(t))=0.
\end{align*}

We use (\ref{eq207}), (\ref{eq511}) and recall $b_{ij}=\nabla_{ij}h+h\delta_{ij}$ may give
\begin{align*}\frac{\partial (\det(\nabla^2h+hI))^{-1}}{\partial t}=&-(\det(\nabla^2h+hI))^{-2}
\frac{\partial(\det(\nabla^2h+hI))}{\partial b_{ij}}\frac{\partial(\nabla^2h+hI)}{\partial t}\\
=&-(\det(\nabla^2h+hI))^{-2}\frac{\partial(\det(\nabla^2h+hI))}{\partial b_{ij}}
(h_{tij}+h_t\delta_{ij})\\
\leq&(\det(\nabla^2h+hI))^{-2}\frac{\partial(\det(\nabla^2h+hI))}{\partial b_{ij}}
M(b_{ij}-\varepsilon_0\delta_{ij})\\
\leq&\mathcal{K}M[(n-1)-\varepsilon_0(n-1)\mathcal{K}^{\frac{1}{n-1}}].
\end{align*}

Therefore, based on (\ref{eq512}) and the above computations, we have the following conclusion at $(x_0,t)$,
\begin{align}\label{eq514}
\frac{\partial}{\partial t}M\leq\frac{1}{h-\varepsilon_0}\bigg(C_2M^2+f\lambda \rho^{n-p}|\nabla u|^{-q}\mathcal{K}M[(n-1)-\varepsilon_0(n-1)\mathcal{K}^{\frac{1}{n-1}}]\bigg)+M+M^2.
\end{align}
If $M >>1$,
\begin{align*}
\frac{1}{C_3}\mathcal{K}\leq M\leq C_3\mathcal{K}.
\end{align*}
Then, (\ref{eq514}) implies that
\begin{align*}
\frac{\partial}{\partial t}M\leq& \frac{1}{h-\varepsilon_0}\bigg(C_2M^2+f\lambda \rho^{n-p}|\nabla u|^{-q}C_3M^2((n-1)-\varepsilon_0(n-1)(C_3M) ^{\frac{1}{n-1}})\bigg)+M+M^2\\
\leq&\frac{1}{h-\varepsilon_0}M^2\bigg(C_2+[f\lambda \rho^{n-p}|\nabla u|^{-q}C_3(n-1)]-[f\lambda\rho^{n-p}|\nabla u|^{-q}C_3^{\frac{n}{n-1}}(n-1)]\varepsilon_0M ^{\frac{1}{n-1}}+2\bigg)\\
=&\frac{[f\lambda\rho^{n-p}|\nabla u|^{-q}C_3^{\frac{n}{n-1}}(n-1)]}{h-\varepsilon_0}M^2\bigg(\frac{C_2+[f\lambda \rho^{n-p}|\nabla u|^{-q}C_3(n-1)]+2}{[f\lambda\rho^{n-p}|\nabla u|^{-q}C_3^{\frac{n}{n-1}}(n-1)]}-\varepsilon_0M^{\frac{1}{n-1}}\bigg)\\
\leq & C_4M^2(C_5-\varepsilon_0M^{\frac{1}{n-1}})<0.
\end{align*}
Since $C_4$ and $C_5$ depend on $\|f\|_{C^0(S^{n-1})}$, $\|h\|_{C^0(S^{n-1}\times [0,T))}$, $\|h\|_{C^1(S^{n-1}\times [0,T))}$, $\|\lambda\|_{C^0(S^{n-1}\times [0,T))}$ and $|\nabla u|$. Consequently, we get
\begin{align*}
M(x_0,t)\leq C.
\end{align*}
Thus, for any $(x,t)\in S^{n-1}\times [0,T)$, we have
\begin{align*}
\mathcal{K}(x,t)=\frac{(h-\varepsilon_0)M(x,t)+h}{f(x)\rho^{n-p}|\nabla u|^{-q}\lambda}\leq
\frac{(h-\varepsilon_0)M(x_0,t)+h}{f(x)\rho^{n-p}|\nabla u|^{-q}\lambda}\leq C.
\end{align*}

Step 2: Prove $\kappa_i\geq\frac{1}{C}$.

We consider the auxiliary function as follows:
\begin{align*}
E(x,t)=\log\beta_{\max}(\{b_{ij}\})-A\log h+B|\nabla h|^2,
\end{align*}
where $A, B$ are positive constants which will be chosen later, and $\beta_{\max}(\{b_{ij}\})$ denotes the maximal eigenvalue of $\{b_{ij}\}$; for convenience, we write $\{b^{ij}\}$ for $\{b_{ij}\}^{-1}$.

For every fixed $t\in[0,T)$, suppose $\max_{S^{n-1}}E(x,t)$ is attained at point $x_0\in S^{n-1}$. By a rotation of coordinates, we may assume
\begin{align*}
\{b_{ij}(x_0,t)\} \text{ is diagonal,} \quad \text {and} \quad \beta_{\max}(\{b_{ij}\}(x_0,t))=b_{11}(x_0,t).
\end{align*}
Hence, in order to show $\kappa_i\geq\frac{1}{C}$, that is to prove $b_{11}\leq C$. By means of the above assumption, we
transform $E(x,t)$ into the following form,
\begin{align*}
\widetilde{E}(x,t)=\log b_{11}-A\log h+B|\nabla h|^2.
\end{align*}
Utilizing again the above assumption, for any fixed $t \in [0,T)$, $\widetilde{E}(x,t)$ has a local
maximum at $(x_0,t)$, thus, at $(x_0,t)$, we have
\begin{align}\label{eq515}
0=\nabla_i\widetilde{E}=&b^{11}\nabla_ib_{11}-A\frac{h_i}{h}+2B\sum h_kh_{ki}\\
\nonumber=&b^{11}(h_{i11}+h_1\delta_{i1})-A\frac{h_i}{h}+2Bh_ih_{ii},
\end{align}
and
\begin{align*}
0\geq&\nabla_{ii}\widetilde{E}\\
=&\nabla_ib^{11}(h_{i11}+h_1\delta_{i1})+b^{11}
[\nabla_i(h_{i11}+h_1\delta_{i1})]-A\bigg(\frac{h_{ii}}{h}-\frac{h_i^2}{h^2}\bigg)
+2B(\sum h_kh_{kii}+h^2_{ii})\\
=&-(b_{11})^{-2}\nabla_ib_{11}(h_{i11}+h_1\delta_{i1})+b^{11}(\nabla_{ii}b_{11})-A\bigg(\frac{h_{ii}}{h}-\frac{h_i^2}{h^2}\bigg)
+2B(\sum h_kh_{kii}+h^2_{ii})\\
=&b^{11}\nabla_{ii}b_{11}-(b^{11})^2(\nabla_ib_{11})^2-A\bigg(\frac{h_{ii}}{h}-\frac{h_i^2}{h^2}\bigg)
+2B(\sum h_kh_{kii}+h^2_{ii}).	
\end{align*}
In addition, at $(x_0,t)$, we also obtain
\begin{align*}
\frac{\partial}{\partial t}\widetilde{E}=&\frac{1}{b_{11}}\frac{\partial b_{11}}{\partial t}-A\frac{h_t}{h}+2B\sum h_kh_{kt}\\ =&b^{11}\frac{\partial}{\partial t}(h_{11}+h\delta_{11})-A\frac{h_t}{h}+2B\sum h_kh_{kt}\\
=&b^{11}(h_{11t}+h_t)-A\frac{h_t}{h}+2B\sum h_kh_{kt}.
\end{align*}

From Equation (\ref{eq403}) and (\ref{eq207}), we know that
\begin{align}\label{eq416}
\nonumber \log(h-h_t)=&\log(h+\lambda\rho^{n-p}f|\nabla u|^{-q}\mathcal{K}-h)\\
\nonumber=&\log\mathcal{K}+
\log(\lambda\rho^{n-p}f|\nabla u|^{-q})\\
=&-\log[\det(\nabla^2h+hI)]+\log(\lambda\rho^{n-p}f|\nabla u|^{-q}).
\end{align}
Let
\begin{align*}
\mathcal{R}(x,t)=\log(\lambda\rho^{n-p}f|\nabla u|^{-q}).
\end{align*}
Differentiating (\ref{eq416}) once and twice, we respectively get
\begin{align*}\frac{h_k-h_{kt}}{h-h_t}=&-\sum b^{ij}\nabla_kb_{ij}+\nabla_k\mathcal{R}\\
	=&-\sum b^{ii}(h_{kii}+h_i\delta_{ik})+\nabla_k\mathcal{R},
\end{align*}
and
\begin{align*}
\frac{h_{11}-h_{11t}}{h-h_t}-\frac{(h_1-h_{1t})^2}{(h-h_t)^2}=&-\bigg(-\sum (b^{ii})^2(\nabla_{i}b_{ii})^2+b^{ii}\nabla_{ii}b_{ii}\bigg)+\nabla_{11}\mathcal{R}\\
=&-\sum b^{ii}\nabla_{11}b_{ii}+\sum b^{ii}b^{jj}(\nabla_1b_{ij})^2+\nabla_{11}\mathcal{R}.
\end{align*}
By the Ricci identity, we have
\begin{align*}
\nabla_{11}b_{ii}=\nabla_{ii}b_{11}-b_{11}+b_{ii}.
\end{align*}
Thus, we can derive
\begin{align*}
\frac{\frac{\partial}{\partial t}\widetilde{E}}{h-h_t}=&b^{11}\bigg(\frac{h_{11t}+h_t}
	{h-h_t}\bigg)-A\frac{h_t}{h(h-h_t)}
	+\frac{2B\sum h_kh_{kt}}{h-h_t}\\
	=&b^{11}\bigg(\frac{h_{11t}-h_{11}}{h-h_t}+\frac{h_{11}+h-h+h_t}{h-h_t}\bigg)-A\frac{1}{h}
	\frac{h_t-h+h}{h-h_t}
	+\frac{2B\sum h_kh_{kt}}{h-h_t}\\
	=&b^{11}\bigg(-\frac{(h_1-h_{1t})^2}{(h-h_t)^2}+\sum b^{ii}
	\nabla_{11}b_{ii}-\sum b^{ii}b^{jj}(\nabla_1b_{ij})^2-\nabla_{11}\mathcal{R}\bigg.\\
	&\bigg.+\frac{h_{11}+h-{(h-h_t)}}
	{h-h_t}\bigg) -\frac{A}{h}\bigg(\frac{-(h-h_t)+h}{h-h_t}\bigg)+\frac{2B\sum h_kh_{kt}}{h-h_t}\\
	=&b^{11}\bigg(-\frac{(h_1-h_{1t})^2}{(h-h_t)^2}+\sum b^{ii}
	\nabla_{11}b_{ii}-\sum b^{ii}b^{jj}(\nabla_1b_{ij})^2-\nabla_{11}\mathcal{R}\bigg)\\
	&+b^{11}\bigg(\frac{h_{11}+h}{h-h_t}-1\bigg) +\frac{A}{h}-\frac{A}{h-h_t}+\frac{2B\sum h_kh_{kt}}{h-h_t}\\
	=&b^{11}\bigg(-\frac{(h_1-h_{1t})^2}{(h-h_t)^2}+\sum b^{ii}
	\nabla_{11}b_{ii}-\sum b^{ii}b^{jj}(\nabla_1b_{ij})^2-\nabla_{11}\mathcal{R}\bigg)+\frac{1-A}{h-h_t}\\
	&-b^{11}+\frac{A}{h}+\frac{2B\sum h_kh_{kt}}{h-h_t}\\
    \leq&b^{11}\bigg(\sum b^{ii}(\nabla_{ii}b_{11}-b_{11}+b_{ii})-\sum b^{ii}b^{jj}(\nabla_1b_{ij})^2\bigg)
	-b^{11}\nabla_{11}\mathcal{R}+\frac{1-A}{h-h_t}\\
	&+\frac{A}{h}+\frac{2B\sum h_kh_{kt}}{h-h_t}\\
	\leq&\sum b^{ii}\bigg[(b^{11})^2(\nabla_ib_{11})^2+A\bigg(\frac{h_{ii}}{h}-\frac{h_i^2}{h^2}
	\bigg)-2B(\sum h_kh_{kii}+h_{ii}^2)\bigg]\\
	&-b^{11}\sum b^{ii}b^{jj}(\nabla_1b_{ij})^2-b^{11}\nabla_{11}\mathcal{R}+\frac{1-A}{h-h_t}+\frac{A}{h}+\frac{2B\sum h_kh_{kt}}{h-h_t}\\
	\leq&\sum b^{ii}\bigg[A\bigg(\frac{h_{ii}+h-h}{h}-\frac{h_i^2}{h^2}\bigg)\bigg]+2B
	\sum h_k\bigg(-\sum b^{ii}h_{kii}+\frac{h_{kt}}{h-h_t}\bigg)\\
	&-2B\sum b^{ii}(b_{ii}-h)^2-b^{11}\nabla_{11}\mathcal{R}+\frac{1-A}{h-h_t}+\frac{A}{h}\\
	\leq&\sum b^{ii}\bigg[A\bigg(\frac{b_{ii}}{h}-1\bigg)\bigg]+2B\sum h_k\bigg(\frac{h_k}{h-h_t}
	+b^{kk}h_k-\nabla_k\mathcal{R}\bigg)\\
	&-2B\sum b^{ii}(b_{ii}^2-2b_{ii}h)-b^{11}\nabla_{11}\mathcal{R}+\frac{1-A}{h-h_t}+\frac{A}{h}\\
	\leq&-2B\sum h_k\nabla_k\mathcal{R}-b^{11}\nabla_{11}\mathcal{R}+(2B|\nabla h|-A)\sum b^{ii}-2B\sum b_{ii}\\
	&+4B(n-1)h+\frac{2B|\nabla h|^2+1-A}{h-h_t}+\frac{nA}{h}.
\end{align*}

Recall
\begin{align*}
\mathcal{R}(x,t)=\log(\lambda\rho^{n-p}f|\nabla u|^{-q})=\log \lambda+(n-p)\log \rho+\log f-q\log|\nabla u|,
\end{align*}
since $\lambda$ is a constant factor, we have $\lambda_k=0$.
Consequently, we may obtain following conclusion from $\mathcal{R}(x,t)$ and (\ref{eq515}),
\begin{align*}
&-2B\sum h_k\nabla_k\mathcal{R}-b^{11}\nabla_{11}\mathcal{R}\\
=&-2B\sum h_k\bigg(\frac{f_k}{f}+(n-p)\frac{\rho_k}{\rho}-q\frac{(|\nabla u|)_k}{|\nabla u|}
\bigg)-b^{11}\nabla_{11}\mathcal{R}\\
=&-2B\sum h_k\bigg(\frac{f_k}{f}+(n-p)\frac{\rho_k}{\rho}-q\frac{(|\nabla u|)_k}{|\nabla u|}\bigg)\\
&-b^{11}\bigg(\frac{ff_{11}-f_1^2}{f^2}+(n-p)\frac{\rho\rho_{11}-\rho_1^2}{\rho^2}-q\frac{|\nabla u|(|\nabla u|)_{11}-(|\nabla u|)^2_1}{(|\nabla u|)^2}\bigg)\\
\leq&C_6B+C_{7}b^{11}+2qB\sum h_k\frac{(|\nabla u|)_k}{|\nabla u|}-(n-p)b^{11}\frac{\rho\rho_{11}-\rho_1^2}{\rho^2}\\
&+qb^{11}\frac{|\nabla u|(|\nabla u|)_{11}-(|\nabla u|)^2_1}{(|\nabla u|)^2}.
\end{align*}
From $\rho=(h^2+|\nabla h|^2)^{\frac{1}{2}}$, we have
\begin{align*}
\rho_k=\rho^{-1}(hh_k+\Sigma h_kh_{kk})=\rho^{-1}(hh_k+\Sigma h_k(b_{kk}-h\delta_{kk})),
\end{align*}
then,
\begin{align*}
\rho_{11}=\frac{hh_{11}+h_1^2+\Sigma h_1h_{111}+\Sigma h_{11}^2}{\rho}-\frac{h_1^2b_{11}^2}{\rho^3}.
\end{align*}
This and (\ref{eq515}) imply
\begin{align*}
\rho_{11}=\frac{h(b_{11}-h)+h_1^2+\Sigma h_1\bigg(A\frac{h_1}{h}-2Bh(b_{11}-h)-b^{11}(h_1\delta_{11})\bigg )b_{11}}{\rho}-\frac{h_1^2b_{11}^2}{\rho^3}.
\end{align*}
Thus,
\begin{align*}
(n-p)b^{11}\frac{\rho\rho_{11}-\rho_1^2}{\rho^2}\leq C_8b_{11}.
\end{align*}

Recall that
\begin{align*}|\nabla u(X,t)|=-\nabla u(X,t)\cdot x,
\end{align*}
taking the covariant derivative of above equality, we get
\begin{align*}
(|\nabla u|)_k=-b_{ik}((\nabla^2u)e_i\cdot x),
\end{align*}
further,
\begin{align*}
(|\nabla u|)_{11}=&-b_{i11}((\nabla^2u)e_i\cdot x)-b_{j1}b_{i1}((\nabla^3 u)e_je_i\cdot x)\\
&+b_{i1}((\nabla^2u)x\cdot x)-b_{i1}((\nabla^2u)e_i\cdot e_1).
\end{align*}
Thus, combining $|\nabla u|$ is bounded with Lemma \ref{lem56}, we get
\begin{align*}
2qB\sum h_k\frac{(|\nabla u|)_k}{|\nabla u|}=2qB\sum h_k\frac{-b_{ik}((\nabla^2u)e_i\cdot x)}{|\nabla u|}\leq C_{9}Bb_{11}.
\end{align*}
From (\ref{eq515}), we obtain
\begin{align*}
b^{11}b_{i11}=A\frac{h_i}{h}+2Bh_ih_{ii}=A\frac{h_i}{h}+2Bh_i(b_{ii}-h\delta_{ii}).
\end{align*}
Since $|\nabla u|$ and $|\nabla^k u|$ are bounded, and combining Lemma \ref{lem56}, therefore, we get
\begin{align*}
qb^{11}\frac{|\nabla u|(|\nabla u|)_{11}-(|\nabla u|)^2_1}{(|\nabla u|)^2}\leq C_{10}Bb_{11}.
\end{align*}

It follows that
\begin{align*}
\frac{\frac{\partial}{\partial t}\widetilde{E}}{h-h_t}\leq C_{11}Bb_{11}+C_{12}b^{11}+
(2B|\nabla h|-A)\sum b^{ii}-2B\sum b_{ii}+4B(n-1)h+\frac{nA}{h}<0,
\end{align*}
provided $b_{11}>>1$ and if we choose $A>>B$. We obtain
\begin{align*}
\widetilde{E}(x_0,t)\leq C.
\end{align*}
Consequently,
\begin{align*}
E(x_0,t)=\widetilde{E}(x_0,t)\leq C.
\end{align*}
This tells us the principal radii is bounded from above, or equivalently $\kappa_i\geq\frac{1}{C}$.
\end{proof}

\vskip 0pt
\section{\bf The convergence of the flow}\label{sec6}

With the help of priori estimates in the section \ref{sec5}, the long-time existence and asymptotic behaviour of flow (\ref{eq109}) (or (\ref{eq401})) are obtained, we also complete the proof of Theorem \ref{thm13}.
\begin{proof}[Proof of Theorem \ref{thm13}] Since Equation (\ref{eq403}) is parabolic, we can get its short time existence. Let $T$ be the maximal time such that $h(\cdot, t)$ is a smooth solution
to Equation (\ref{eq403}) for all $t\in[0,T)$. Lemma \ref{lem53}-\ref{lem56} enable us to apply Lemma \ref{lem57} to Equation (\ref{eq403}), thus, we can deduce an uniformly upper bound and an uniformly lower bound for the biggest eigenvalue of $\{(h_{ij}+h\delta_{ij})(x,t)\}$. This implies
\begin{align*}
	C^{-1}I\leq (h_{ij}+h\delta_{ij})(x,t)\leq CI,\quad \forall (x,t)\in S^{n-1}\times [0,T),
\end{align*}
where $C>0$ is independent of $t$. This shows that Equation (\ref{eq403}) is uniformly parabolic. Estimates for higher derivatives follow from the standard regularity theory of uniformly parabolic equations
Krylov \cite{KR}. Hence, we obtain the long time existence and regularity of solutions for the flow
(\ref{eq109}) (or (\ref{eq401})). Moreover, we obtain
\begin{align}\label{eq601}
\|h\|_{C^{l,m}_{x,t}(S^{n-1}\times [0,T))}\leq C_{l,m},
\end{align}
where $C_{l,m}$ ($l, m$ are nonnegative integers pairs) are independent of $t$, then $T=\infty$. Using the parabolic comparison principle, we can attain the uniqueness of smooth even solutions $h(\cdot,t)$ of Equation (\ref{eq403}) for $p<n$ ($p\neq 0$). Specially, we also get the uniqueness of smooth non-even solutions $h(\cdot,t)$ with $p<0$.

Now, we recall the non-increasing property of $\Phi(\Omega_t)$ in Lemma \ref{lem42}, we know that

\begin{align}\label{eq602}
\frac{\partial\Phi(\Omega_t)}{\partial t}\leq 0.
\end{align}
Based on (\ref{eq602}), there exists a $t_0$ such that
\begin{align*}
\frac{\partial\Phi(\Omega_t)}{\partial t}\bigg|_{t=t_0}=0,
\end{align*}
this yields
\begin{align*}
|\nabla u|^qh\rho^{p-n}\det(\nabla_{ij}h+h\delta_{ij})=\lambda(t_0)f,
\end{align*}
equivalently,
\begin{align*}
|\nabla u|^qh(h^2+|\nabla h|^2)^{\frac{p-n}{2}}\det(\nabla_{ij}h+h\delta_{ij})=\lambda(t_0)f.
\end{align*}
Let $\Omega=\Omega_{t_0}$ and $\lambda(t_0)=\lambda$. Thus, the support function of $\Omega$ satisfies (\ref{eq108}).

In view of (\ref{eq601}), applying the Arzel$\grave{a}$-Ascoli theorem and a diagonal argument, we can extract
a subsequence of $t$, it is denoted by $\{t_j\}_{j \in \mathbb{N}}\subset (0,+\infty)$, and there exists a smooth function $h(x)$ such that
\begin{align}\label{eq603}
\|h(x,t_j)-h(x)\|_{C^i(S^{n-1})}\rightarrow 0,
\end{align}
uniformly for each nonnegative integer $i$ as $t_j \rightarrow \infty$. This reveals that $h(x)$ is a support function. Let us denote by $\Omega$ the convex body determined by $h(x)$. Thus, $\Omega$ is a smooth, origin-symmetric strictly convex body for $p<n$ $(p\neq0)$. Specially, for $p<0$, $\Omega$ is a smooth strictly convex body containing the origin in its interior.

Moreover, by (\ref{eq601}) and the uniform estimates in section \ref{sec5}, we conclude that $\Phi(\Omega_t)$ is a bounded function in $t$ and $\frac{\partial \Phi(\Omega_t)}{\partial t}$ is uniformly continuous. Thus, for any $t>0$, by the monotonicity of $\Phi$ in Lemma \ref{lem42}, there is a constant $C>0$ being independent of $t$, such that
\begin{align*}
\int_0^t\bigg(-\frac{\partial\Phi(\Omega_t)}{\partial t}\bigg)dt=\Phi(\Omega_0)-\Phi(\Omega_t)\leq C,
\end{align*}
this gives
\begin{align}\label{eq604}
\lim_{t\rightarrow\infty}\Phi(\Omega_t)-\Phi(\Omega_0)=
-\int_0^\infty\bigg|\frac{\partial}{\partial t}\Phi(\Omega_t)\bigg|dt\leq C.
\end{align}
The left hand side of (\ref{eq604}) is bounded below by $-2C$, therefore, there is a subsequence $t_j\rightarrow\infty$ such that
\begin{align*}
\frac{\partial}{\partial t}\Phi(\Omega_{t_j})\rightarrow 0 \quad\text{as}\quad  t_j\rightarrow\infty.
\end{align*}
The proof of Lemma \ref{lem42} shows that
\begin{align}\label{eq605}
\frac{\partial\Phi(\Omega_t)}{\partial t}\bigg|_{t=t_j}=-\int_{S^{n-1}}\frac{[-\lambda(t)\frac{f(x)\rho^{n-p}\mathcal{K}}{|\nabla u|^q}+h]^2}{\frac{h\rho^{n-p}\mathcal{K}}{|\nabla u|^q}\int_{S^{n-1}}\rho^p|\nabla u|^qdv}dx\bigg|_{t=t_j}\leq 0.
\end{align}
Taking the limit $t_j\rightarrow\infty$, the equality condition of (\ref{eq605}) means that there has
\begin{align*}
|\nabla u(X^\infty)|^qh^{\infty}[(h^\infty)^2+|\nabla (h^\infty)|^2]^{\frac{p-n}{2}}\det(\nabla_{ij}h^{\infty}+h^{\infty}\delta_{ij})=\frac{\frac{a}{b}\widetilde{Q}_{q,n-p}(\Omega_\infty)}{\int_{S^{n-1}}f(x)dx}f(x),
\end{align*}
which satisfies (\ref{eq108}), where $h^{\infty}$ is the support function of $\Omega^{\infty}$, and $X^{\infty}=\nabla h^{\infty}$.
This provides the proof of Theorem \ref{thm13}.
\end{proof}

\end{document}